\documentclass[12pt]{amsart}
\pdfoutput=1

\usepackage{tikz, amssymb, microtype}

\usepackage[margin=1in,letterpaper,portrait]{geometry}

\usepackage{hyperref}

\theoremstyle{definition}
\newtheorem{thm}{Theorem}[section]
\newtheorem{prop}[thm]{Proposition}

\newtheorem{conj}[thm]{Conjecture}
\newtheorem{lemma}[thm]{Lemma}
\newtheorem{defn}[thm]{Definition}
\newtheorem{cor}[thm]{Corollary}
\newtheorem{obs}[thm]{Observation}
\newtheorem{question}[thm]{Question}
\newtheorem{rem}[thm]{Remark}

\DeclareMathOperator{\Des}{Des}

\DeclareMathOperator{\Ret}{Ret}
\DeclareMathOperator{\ret}{ret}

\DeclareMathOperator{\des}{des}

\DeclareMathOperator{\op}{\Omega}

\DeclareMathOperator{\A}{\mathcal{A}}
\DeclareMathOperator{\LL}{\mathcal{L}}
\DeclareMathOperator{\OO}{\mathcal{O}}
\DeclareMathOperator{\CC}{\mathcal{C}}

\DeclareMathOperator{\wt}{wt}


\newcommand{\RR}{\mathbb{R}}

\newcommand{\NN}{\mathbb{N}}

\let\originalleft\left
\let\originalright\right
\renewcommand{\left}{\mathopen{}\mathclose\bgroup\originalleft}
\renewcommand{\right}{\aftergroup\egroup\originalright}

\title{Zig-zag Eulerian polynomials}

\author[Petersen]{T. Kyle Petersen}
\address{Department of Mathematical Sciences, DePaul University, Chicago, IL}
\email{tpeter21@depaul.edu}

\author[Zhuang]{Yan Zhuang}
\address{Department of Mathematics and Computer Science, Davidson College, Davidson, NC}
\email{yazhuang@davidson.edu}

\date{\today}

\subjclass{05A05, 05A15 (Primary); 06A07 (Secondary)}

\keywords{alternating permutations, Eulerian polynomials, $P$-partitions, order polynomials, zig-zag posets, gamma-nonnegativity}

\begin{document}

\begin{abstract}
For any finite partially ordered set $P$, the $P$-Eulerian polynomial is the generating function for the descent number over the set of linear extensions of $P$, and is closely related to the order polynomial of $P$ arising in the theory of $P$-partitions. Here we study the $P$-Eulerian polynomial where $P$ is a naturally labeled zig-zag poset; we call these \emph{zig-zag Eulerian polynomials}. A result of Br\"and\'en implies that these polynomials are gamma-nonnegative, and hence their coefficients are symmetric and unimodal. The zig-zag Eulerian polynomials and the associated order polynomials have appeared fleetingly in the literature in a wide variety of contexts---e.g., in the study of polytopes, magic labelings of graphs, and Kekul\'e structures---but they do not appear to have been studied systematically. 

In this paper, we use a ``relaxed'' version of $P$-partitions to both survey and unify results. Our technique shows that the zig-zag Eulerian polynomials also capture the distribution of ``big returns'' over the set of (up-down) alternating permutations, as first observed by Coons and Sullivant. We develop recurrences for refined versions of the relevant generating functions, which evoke similarities to recurrences for the classical Eulerian polynomials. We conclude with a literature survey and open questions.
\end{abstract}

\dedicatory{Dedicated to our Ph.D. advisor Ira M.\ Gessel, on the occasion of his 73rd birthday.}

\maketitle

\section{Introduction}

The theory of $P$-partitions has a long history with many applications to algebraic and enumerative combinatorics; see \cite[Chapter 3]{EC1} for a comprehensive introduction and \cite{Gessel2016} for a historical survey of this topic. There are two important polynomials that arise in this theory: the \emph{order polynomial} $\op_P(m)$, which counts bounded $P$-partitions, and the \emph{$P$-Eulerian polynomial} $A_P(t)$, which counts the linear extensions of $P$ according to descent number. These two polynomials are related by the formula
\begin{equation}
 \frac{A_P(t)}{(1-t)^{n+1}}= \sum_{m\geq 1} \op_P(m) t^m. \label{e-Eulord}
\end{equation}
Let $S_n$ denote the symmetric group on $[n]=\{1,2,\ldots,n\}$. Recall that a \emph{descent} of a permutation $w\in S_n$ is a position $i\in[n-1]$ such that $w(i)>w(i+1)$. We denote the descent set by 
\[
\Des(w) = \{\, i\in[n-1] : w(i)>w(i+1)\,\}
\]
and the number of descents by $\des(w)=\left|\Des(w)\right|$. The classical Eulerian polynomial
\[ 
A_n(t)=\sum_{w \in S_n} t^{1+\des(w)}
\]
is the $P$-Eulerian polynomial when $P$ is an antichain of $n$ elements, for which the order polynomial is $m^n$. So, in this case \eqref{e-Eulord} is the well-known identity
\begin{equation}\label{eq:classic}
 \frac{ A_n(t)}{(1-t)^{n+1}} = \frac{\sum_{w \in S_n} t^{1+\des(w)}}{(1-t)^{n+1}} =\sum_{m \geq 0} m^n t^m,
\end{equation}
which is sometimes used to define the Eulerian polynomials.

In this paper, we focus on a sequence of posets commonly known as the \emph{zig-zag posets}; see Figure \ref{fig:Zn}. Taking $P$ to be any natural labeling of the zig-zag poset on $n$ elements, we call $Z_n(t)=A_{P}(t)$ the $n$th \emph{zig-zag Eulerian polynomial} and $\op_n(m)=\op_{P}(m)$ the $n$th \emph{zig-zag order polynomial}. These polynomials have appeared in the literature since at least the 1970s in a wide variety of contexts---including the study of polytopes, magic labelings of graphs, and even Kekul\'e structures in chemistry---although more attention has been given to $\op_n(m)$ than to $Z_n(t)$. The study of these polynomials by different authors over the past decades seem to have occurred mostly independently, with many of the authors evidently unaware of each others' work, and so one of the goals of our present paper is to give a unified treatment of these polynomials using the theory of $P$-partitions. 

As first shown by Coons and Sullivant \cite{CoonsSullivant2023}, the zig-zag Eulerian polynomials also encode the distribution of the ``big return'' statistic over (up-down) alternating permutations. In other words, $Z_n(t) = tU_n(t)$, where $U_n(t)$ is the generating function for big returns of alternating permutations. We will give a proof of this equidistribution using a modified version of $P$-partitions that we call ``relaxed'' $P$-partitions. Moreover, Coons and Sullivant demonstrated that this distribution is symmetric and unimodal by proving that the order polytope of the zig-zag poset is Gorenstein. We shall instead use a result of Br\"and\'en to show that these polynomials are gamma-nonnegative, which implies both symmetry and unimodality of their coefficients.

Having laid some of this groundwork, we undertake the task of investigating generating functions and recurrences for the polynomials $Z_n(t)$ and $\op_n(m)$. To explain one of our main results, we first recall an analogous result for classical Eulerian polynomials. Letting $H_n(t) = A_n(t)/(1-t)^{n+1}$, it is easy to see from the right-hand side of \eqref{eq:classic} that
\begin{equation}\label{eq:Hrec}
 tH_n'(t) = \sum_{m\geq 1} m^{n+1}t^m = H_{n+1}(t),
\end{equation}
which in turn implies
\begin{equation}\label{eq:Arec}
A_{n+1}(t) = t(1-t)A_n'(t)+(n+1)tA_n(t).
\end{equation}
Equation \eqref{eq:Arec} is one of the most fundamental recursive formulas for the Eulerian polynomials. This recursion allows easy proof of, for example, the fact that the Eulerian polynomials have only real roots, which in turn allows one to prove that the Eulerian distribution is asymptotically normal \cite{Bender}.

Define $G_n(t) = Z_n(t)/(1-t)^{n+1}$ to be the zig-zag analogue of $H_n(t)$. In Section \ref{sec:refinement}, we introduce a refinement of this generating function that is best expressed as
\[
 G_n(p,q,x) = \sum_{m\geq 1} \op_n(p,q;m)x^m = \frac{pqxU_n(px,qx)}{(1-qx)^{n+1}},
\]
where $U_n(s,t)$ is a refinement of the big return generating function, i.e., $tU_n(t,t)=tU_n(t)= Z_n(t)$. At $p=q=1$, we have $G_n(1,1,x) = G_n(x) = Z_n(x)/(1-x)^{n+1}$.  

We define $G_0(p,q,x) = pqx/(1-qx)$ and $U_0(s,t) = 1$. The following result provides one recursive procedure for computing $G_n(p,q,x)$ and $U_n(s,t)$ for larger $n$.

\begin{thm}[Refined recurrences]\label{thm:refined}
For all $n\geq 0$,
\begin{itemize}
\item[(a)] $\displaystyle G_{n+1}(p,q,x) = \frac{p}{p-q}\left[ G_n(q,p,x) - G_n(q,q,x) \right],$ and thus
\vspace{5bp}
\item[(b)] $\displaystyle U_{n+1}(s,t) = \frac{1-t}{s-t}\left[ \left(\frac{1-t}{1-s}\right)^{n+1} sU_n(t,s) - tU_n(t,t) \right].$
\end{itemize}
\end{thm}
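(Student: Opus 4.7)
The plan is to prove part (a) directly; part (b) will then follow by a short substitution. Indeed, plugging the identity $G_n(p,q,x) = pqx\,U_n(px,qx)/(1-qx)^{n+1}$ into both sides of (a), canceling the common factor $pqx$, multiplying through by $(1-qx)^{n+2}$, and making the change of variables $s = px$, $t = qx$ (so that $p - q = (s-t)/x$, $1-qx = 1-t$, and $1-px = 1-s$), the resulting identity collapses to precisely the statement of (b).

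For (a), my approach is to exploit the combinatorial interpretation of $\op_n(p,q;m)$ from Section \ref{sec:refinement} as a weighted enumeration of (relaxed) $P$-partitions of $Z_n$ bounded by $m$, with $p$ and $q$ tracking boundary statistics compatible with the up-down alternation. To pass from $Z_n$ to $Z_{n+1}$, I would remove the last (newly added) vertex of $Z_{n+1}$ and condition on its value. The zig-zag inequality with its neighbor forces this new value to lie strictly above or strictly below the neighbor's value (depending on parity), producing a finite geometric-type sum. Packaging the sum over $m$ into $G_{n+1}(p,q,x)$, this finite sum should collapse to a divided difference of the shape $[G_n(q,p,x) - G_n(q,q,x)]/(p-q)$, with the swap $(p,q) \mapsto (q,p)$ inside $G_n(q,p,x)$ reflecting the parity flip in the zig-zag orientation at the endpoint once the last vertex is deleted.

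The main technical obstacle is bookkeeping the refinement variables through the decomposition, and in particular verifying that the overall prefactor works out to exactly $p/(p-q)$ rather than $1/(p-q)$ or some other rational function in $p,q$; this extra factor of $p$ should account for the combinatorial weight of the newly added vertex itself. As a sanity check, I would verify the base case $n=0 \to n=1$ explicitly: one can compute $G_1(p,q,x) = pqx/[(1-px)(1-qx)]$ directly from the $P$-partition definition, and a short calculation confirms that $\frac{p}{p-q}[G_0(q,p,x) - G_0(q,q,x)]$ reproduces this expression, confirming that the normalization and prefactor are correctly calibrated before proceeding with the general argument.
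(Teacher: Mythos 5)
Your derivation of (b) from (a) by substituting $G_n(p,q,x) = pqx\,U_n(px,qx)/(1-qx)^{n+1}$ and changing variables $s=px$, $t=qx$ is exactly what the paper does (and is correct, granting the identity $G_n = pqx U_n(px,qx)/(1-qx)^{n+1}$, which is Theorem \ref{thm:Gperms}), and your base-case check for $n=0\to 1$ is fine.

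However, your plan for (a) has a genuine gap. The refinement $\op_n(p,q;m)=\sum_f p^{f(1)}q^{m+1-f(1)}$ keeps track of the \emph{first} value $f(1)$. If you delete the \emph{last} vertex $n+1$ from $Z_{n+1}$ and condition on $f(n+1)$, the resulting count depends on the constraint $f(n+1)\lessgtr f(n)$, so you need the joint distribution of $(f(1),f(n))$ over $\A_n(m)$ --- information that $\op_n(p,q;m)$ does not encode. The sum does not collapse to an expression in $\op_n(\cdot,\cdot;m)$ alone, so the divided-difference form does not fall out the way you hope. Your stated justification for the $(p,q)\mapsto(q,p)$ swap (``parity flip at the endpoint once the last vertex is deleted'') also does not hold: deleting vertex $n+1$ from $Z_{n+1}$ leaves $Z_n$ with the orientation at vertex $1$ unchanged. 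Separately, for \emph{relaxed} $P$-partitions of $Z_n$ every covering relation gives a weak inequality, so the new vertex's value is not forced ``strictly above or strictly below'' its neighbor.

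The paper's Proposition \ref{prp:Oqrec} instead deletes the \emph{first} vertex. The restriction $(f(2),\dots,f(n+1))$ lives in $\A^1(\bar Z_n;m)$, and the reflection $g=(m{+}1{-}f(2),\dots,m{+}1{-}f(n{+}1))$ lands in $\A^1(Z_n;m)$ with $g(1)=m{+}1{-}f(2)$. Conditioning on $g(1)=j$ and summing $f(1)$ over $\{1,\dots,m{+}1{-}j\}$ (the weak inequality $f(1)\le f(2)$) produces the geometric sum $pq^m+\cdots+p^{m+1-j}q^{j} = pq^{j}\frac{p^{m+1-j}-q^{m+1-j}}{p-q}$; the $(p,q)\to(q,p)$ swap in $\op_n(q,p;m)$ comes precisely from the $m{+}1-\cdot$ reflection turning $f(2)$ into $g(1)$. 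If you want to salvage a ``delete the last vertex'' argument, you would need either a two-sided refinement tracking both $f(1)$ and $f(n)$, or a parity-dependent symmetry argument relating the distribution of $f(n)$ to that of $f(1)$; deleting the first vertex avoids both complications.
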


By taking the appropriate limits of parts (a) and (b) of Theorem \ref{thm:refined}, we obtain the following corollary, which provides expressions analogous to Equations \eqref{eq:Hrec} and \eqref{eq:Arec}.

\begin{cor}[Derivative expressions]\label{cor:deriv}
For all $n\geq 0$,
\begin{itemize}
\item[(a)] $\displaystyle G_{n+1}(x) = \frac{d}{dq}\left[ G_n(1,q,x) \right]_{q=1},$
\vspace{5bp}
\item[(b1)] $\displaystyle U_{n+1}(t) = t(1-t)\frac{d}{dt}\left[ U_n(s,t) \right]_{s=t} + (nt+1)U_n(t)$, and
\vspace{5bp}
\item[(b2)] $\displaystyle Z_{n+1}(t) = t(1-t)\frac{d}{dt}\left[ tU_n(s,t) \right]_{s=t} + (n+1)tZ_n(t)$.
\end{itemize}
\end{cor}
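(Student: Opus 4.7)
My plan is to derive each part of Corollary \ref{cor:deriv} by taking an appropriate limit along the diagonal ($p=q$ or $s=t$) of the corresponding part of Theorem \ref{thm:refined}, and resolving the resulting $0/0$ indeterminacy with L'H\^opital's rule. In each case the left-hand side is a rational function, so the relevant limit exists and equals the value at the diagonal point.

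For part (a), I will first specialize Theorem \ref{thm:refined}(a) by setting $q=1$, obtaining
\[
G_{n+1}(p,1,x) = \frac{p}{p-1}\bigl[G_n(1,p,x) - G_n(1,1,x)\bigr].
\]
Since $G_n(1,1,x) = G_n(x)$ does not depend on $p$, the bracketed expression divided by $p-1$ is a one-variable difference quotient whose limit as $p \to 1$ is precisely $\frac{d}{dq}[G_n(1,q,x)]_{q=1}$, while the prefactor $p$ tends to $1$ and the left-hand side tends to $G_{n+1}(1,1,x) = G_{n+1}(x)$. Renaming $p$ as $q$ yields (a).

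For part (b1), I will take the limit $s \to t$ in Theorem \ref{thm:refined}(b). The left-hand side converges to $U_{n+1}(t,t) = U_{n+1}(t)$, and the right-hand side is $0/0$, so I differentiate the bracketed expression with respect to $s$ and evaluate at $s=t$. The product rule gives two contributions: differentiating $\bigl(\tfrac{1-t}{1-s}\bigr)^{n+1}$ yields at $s=t$ a factor of $(n+1)/(1-t)$ multiplied by $t U_n(t)$, while differentiating $sU_n(t,s)$ yields $U_n(t) + t\,\partial_2 U_n(t,t)$, where $\partial_2$ denotes the partial derivative of $U_n$ in its second slot. Multiplying by the overall prefactor $1-t$ and combining the coefficients of $U_n(t)$ via $(n+1)t + (1-t) = nt+1$ produces (b1), after noting that $\partial_2 U_n(t,t)$ is the same object as $\frac{d}{dt}[U_n(s,t)]_{s=t}$ (both are the partial of $U_n$ in the second slot at the diagonal point $(t,t)$). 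Part (b2) will then follow from (b1) by multiplying through by $t$, using $Z_n = tU_n$, and applying the product rule identity $\frac{d}{dt}[tU_n(s,t)]_{s=t} = U_n(t) + t\,\frac{d}{dt}[U_n(s,t)]_{s=t}$ in reverse to absorb the extra term $(1-t)tU_n(t)$ produced by rewriting $(nt+1)tU_n(t) = (n+1)tZ_n(t) + (1-t)tU_n(t)$.

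The only delicate step will be part (b1): keeping the chain rule straight when differentiating the $(n+1)$-th power $\bigl(\tfrac{1-t}{1-s}\bigr)^{n+1}$ with respect to $s$ and then evaluating at $s=t$, and confirming that the two notational forms of the surviving partial derivative really do coincide. Once this is cleanly in hand, parts (a) and (b2) are essentially bookkeeping on top of (b1).
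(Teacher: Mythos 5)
Your proposal is correct and follows essentially the same route as the paper: part (a) by setting $q=1$ and recognizing the difference quotient, and part (b1) by taking the limit $s\to t$ in Theorem~\ref{thm:refined}(b) and evaluating the resulting derivative (the paper packages the L'H\^opital step as the derivative of $sU_n(t,s)/(1-s)^{n+1}$ at $s=t$, which is the same computation as your product-rule-on-the-bracket argument). The paper dispatches (b2) in one sentence by noting it is equivalent to (b1) given $Z_n=tU_n$, whereas you spell out the algebra, but the substance is identical.
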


This paper is organized as follows. In Section \ref{s-bg}, we provide a brief expository overview of $P$-partition theory and zig-zag posets, and we apply Br\"and\'en's result to establish the gamma-nonnegativity of $Z_n(t)$. In Section \ref{sec:alt}, we recall some properties of alternating permutations and define big returns, our key statistic of interest. Section \ref{s-relaxed} is devoted to our modification of the theory of $P$-partitions, and in Section \ref{sec:proof1} we use this theory to show that the polynomials $Z_n(t)$ count alternating permutations by big returns.
In Section \ref{s-F}, we use our approach to derive both new and previously-known recurrences for $\op_n(m)$; see Theorem \ref{thm:Frec}. We introduce our refined order polynomial in Section \ref{sec:refinement} and use it to prove Theorem \ref{thm:refined} and Corollary \ref{cor:deriv}. Section \ref{s-interp} summarizes other known combinatorial interpretations of the zig-zag Eulerian polynomials and their order polynomials, and we end in Section \ref{s-conclusion} with open questions for further study.

\section{$P$-partitions and zig-zag posets} \label{s-bg}

Let us begin by providing the necessary background from the theory of $P$-partitions. All results asserted in this section can be found in \cite[Section 3.15]{EC1}.

\subsection{$P$-partitions}

Throughout, we fix a positive integer $n$ and let $P$ denote a partial ordering of the set $[n]=\{1,2,\ldots,n\}$. We write ``$<_P$'' for the order relation on $P$, i.e., if $i$ is below $j$ in $P$, we say $i$ and $j$ are \emph{comparable} and write $i <_P j$. If neither $i<_P j$ nor $j <_P i$, we say $i$ and $j$ are \emph{incomparable}. A comparable pair $i<_P j$ is \emph{naturally labeled} if $i < j$ in the integers as well; otherwise, the pair is \emph{unnaturally labeled}. 

A \emph{chain} is a poset in which any two elements are comparable. We readily identify chains with permutations, via
\[
 \pi(1) <_{\pi} \pi(2) <_{\pi} \cdots <_{\pi} \pi(n)
\]
whenever $\pi \in S_n$. To say that $i <_{\pi} j$ is equivalent to saying that $\pi^{-1}(i) < \pi^{-1}(j)$ as integers.

We say that a poset $Q$ \emph{refines} $P$ if every relation in $P$ is a relation in $Q$. That is, $Q$ refines $P$ if $i <_P j$ implies $i <_Q j$. In this setting, chains are maximally refined posets. We define the set $\LL(P)$ of \emph{linear extensions} of $P$ to be the set of chains (permutations) that refine $P$:
\[
 \LL(P) = \{\, \pi \in S_n : i <_P j \Rightarrow  i <_{\pi} j \,\}.
\]

We now review the definition of a $P$-partition.

\begin{defn}[$P$-partition]
A \emph{$P$-partition} is an order-preserving function $f: P \to \NN = \{1,2,3,\ldots\}$ such that for $i <_P j$:
\begin{itemize}
\item $f(i) \leq f(j)$ if $i < j$,
\item $f(i) < f(j)$ if $i > j$.
\end{itemize}
\end{defn}

In other words, the values of a $P$-partition on a naturally labeled pair are allowed to agree, while the values on an unnaturally labeled pair must be distinct. We denote the set of $P$-partitions by $\A(P)$.

For example, if $n=3$ and $P$ is the poset with $1 <_P 2$ and $3 <_P 2$, then $P$ has linear extensions $\LL(P)=\{132, 312\}$. We can draw the poset and its extensions with \emph{Hasse diagrams} as indicated here:
\[
 \begin{tikzpicture}[baseline=.5cm]
  \draw (0,1.5) node {$P$:};
  \draw (1,1) node[fill=white,inner sep=1] {$1$}-- (2,2) node[fill=white,inner sep=1] {$2$} -- (3,1) node[fill=white,inner sep=1] {$3$};
 \end{tikzpicture}
 \qquad
 \begin{tikzpicture}
  \draw (0,1) node {$\LL(P)$:};
  \draw (1,0) node[fill=white,inner sep=1] {$1$}-- (1,1) node[fill=white,inner sep=1] {$3$} -- (1,2) node[fill=white,inner sep=1] {$2$};
  \draw (2,0) node[fill=white,inner sep=1] {$3$}-- (2,1) node[fill=white,inner sep=1] {$1$} -- (2,2) node[fill=white,inner sep=1] {$2$};
 \end{tikzpicture}
\]
In this case, every $P$-partition $f$ must satisfy
\[
 f(1)\leq f(2) > f(3),
\]
so we have
\[
 \A(P) = \{\, (a_1, a_2, a_3)\in  \NN^3 : a_1 \leq a_2 > a_3 \,\}
\]
where we are identifying each $P$-partition $f$ with the values $(f(1),f(2),f(3))$. We can write $\A(P)$ as the disjoint union
\[
 \A(P) = \{ a_1 \leq a_3 < a_2 \} \cup \{ a_3 < a_1 \leq a_2 \},
\]
where each of these smaller sets can viewed as the set of $P$-partitions for a chain:
\[
 \A(P) = \A(132) \cup \A(312).
\]

By induction on the number of incomparable pairs in a poset $P$, we can see that the set of all $P$-partitions is the disjoint union of the $P$-partitions for its linear extensions. This is sometimes known as the \emph{fundamental lemma of $P$-partitions} \cite[Lemma 3.15.3]{EC1}.

\begin{lemma}[Fundamental lemma of $P$-partitions]\label{thm:ftpp}
The set of $P$-partitions is the disjoint union of the $\pi$-partitions of its linear extensions $\pi$:
\[
 \A(P) = \bigcup_{\pi \in \LL(P)} \A(\pi).
\]
\end{lemma}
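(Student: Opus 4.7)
The plan is to argue by induction on the number of incomparable pairs in $P$. In the base case, $P$ has no incomparable pairs, so $P$ itself is a chain, $\LL(P)$ consists of the single element $P$, and the statement is a tautology. For the inductive step, choose any incomparable pair $\{i,j\}$ in $P$ and, relabeling if necessary, assume $i<j$ as integers. Let $P_1$ be the poset obtained from $P$ by adjoining the relation $i<_{P_1}j$ and taking the transitive closure, and let $P_2$ be the analogous refinement with $j<_{P_2}i$ adjoined instead. Each $P_k$ refines $P$ and has strictly fewer incomparable pairs, so the inductive hypothesis applies.

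I would then verify two decompositions. First, $\LL(P)=\LL(P_1)\sqcup\LL(P_2)$: any chain refining $P$ must place exactly one of $i$ or $j$ before the other, so it refines exactly one of $P_1$ or $P_2$. Second, and more substantively, $\A(P)=\A(P_1)\sqcup\A(P_2)$. The inclusion $\A(P_1)\cup\A(P_2)\subseteq\A(P)$ is immediate since each $P_k$ refines $P$. For the reverse inclusion, given $f\in\A(P)$, compare $f(i)$ and $f(j)$: if $f(i)\leq f(j)$, this is exactly the constraint imposed by the naturally-labeled new cover $i<_{P_1}j$ (recall $i<j$), so $f\in\A(P_1)$; if instead $f(j)<f(i)$, this is the strict constraint required by the unnaturally-labeled cover $j<_{P_2}i$ (since $j>i$), so $f\in\A(P_2)$. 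The intersection is empty since $f(i)\leq f(j)$ and $f(j)<f(i)$ cannot hold simultaneously. Combining these two decompositions with the inductive hypothesis $\A(P_k)=\bigsqcup_{\pi\in\LL(P_k)}\A(\pi)$ for $k=1,2$ yields $\A(P)=\bigsqcup_{\pi\in\LL(P)}\A(\pi)$, closing the induction.

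The one step I expect to require real care is verifying that the transitive consequences of the newly adjoined relation impose no constraint on $f$ beyond what the dichotomy between $f(i)$ and $f(j)$ already guarantees. Concretely, any new relation in $P_k$ has the form $a<_{P_k} b$ with $a\leq_P i$ and $j\leq_P b$ (or the analogous chain through $P_2$). To confirm that the appropriate weak or strict inequality between $f(a)$ and $f(b)$ is forced, one chains the existing $\A(P)$-inequality between $f(a)$ and $f(i)$, the new inequality between $f(i)$ and $f(j)$, and the existing inequality between $f(j)$ and $f(b)$; a brief case analysis on the integer orderings of $a,b,i,j$ then shows that the chain is strict whenever the integer order of $a$ versus $b$ requires strictness. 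This is the only place where the definition of a $P$-partition is used in an essential way, and once dispatched the rest of the argument is purely bookkeeping.
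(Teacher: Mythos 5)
Your proof is correct and follows exactly the approach the paper indicates: induction on the number of incomparable pairs, refining $P$ into $P_1$ and $P_2$ by resolving an incomparable pair $\{i,j\}$ in both directions, showing $\LL(P)=\LL(P_1)\sqcup\LL(P_2)$ and $\A(P)=\A(P_1)\sqcup\A(P_2)$, and applying the inductive hypothesis. The paper itself only sketches this in one sentence and defers to Stanley's \emph{Enumerative Combinatorics}, but your careful handling of the transitive-closure step, in particular verifying that strictness of the chained inequalities is forced precisely when the integer ordering demands it, is exactly the content the reference supplies.
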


The set of $P$-partitions bounded by $m$ is denoted $\A(P;m)$, i.e.,
\[ 
 \A(P;m) = \{\, f \in \A(P) : 1\leq f(i) \leq m \text{ for all }i \,\}.
\]
The \emph{order polynomial} of $P$ is the cardinality of $\A(P;m)$, denoted $\op_P(m)=\left|\A(P;m)\right|$. As a consequence of Lemma \ref{thm:ftpp}, we have
\[
 \op_P(m) = \sum_{\pi \in \LL(P)} \op_{\pi}(m).
\]
 
In the case of a permutation $\pi$, we have $f \in \A(\pi)$ if and only if
\[
 f(\pi(1))\leq f(\pi(2))\leq \cdots \leq f(\pi(n)),
\]
with $f(\pi(i)) < f(\pi(i+1))$ when $i \in \Des(\pi)$. Thus 
\begin{align}
 \op_{\pi}(m) &= |\{\, 1\leq a_1 \leq a_2 \leq \cdots \leq a_n \leq m : i \in \Des(\pi) \Rightarrow a_i < a_{i+1} \,\}| \nonumber \\
 &= \binom{m+n-1-\des(\pi)}{n} \label{eq:piop}
\end{align}
(see \cite[Chapter 3]{EC1}), and therefore
\[
 \sum_{m\geq 0} \op_{\pi}(m) t^m = \sum_{m\geq 0} \binom{m+n-1-\des(\pi)}{n} t^m = \frac{t^{1+\des(\pi)}}{(1-t)^{n+1}}.
\]

Combining this expression with Lemma \ref{thm:ftpp}, we see that the order polynomial generating function is a rational function of the form
\begin{equation}\label{eq:ogf}
 \frac{A_P(t)}{(1-t)^{n+1}} = \sum_{m\geq 0} \op_P(m) t^m,
\end{equation}
where $A_P(t)$ is a polynomial of degree at most $n$, called the \emph{$P$-Eulerian polynomial}, given by
\[
 A_P(t) = \sum_{w \in \LL(P)} t^{1+\des(w)}.
\]

\subsection{Zig-zag posets}

We now consider, for each $n\geq 1$, a poset $Z_n$ that we call the \emph{zig-zag poset}. This poset $Z_n$ is defined by the relations $1 <_P 2 >_P 3 <_P \cdots$, i.e., $2i-1<_P 2i >_P 2i+1$ for $1\leq i \leq (n-1)/2$, with $n-1 <_P n$ if $n$ is even. 

Posets on $[n]$ admit a left $S_n$-action by permuting labels. Let $s_i = (i,i+1)$ denote the adjacent transposition that swaps $i$ and $i+1$, and let $\varepsilon = s_2 s_4 \cdots s_{2k}$ be the product of all even simple transpositions, with $k=\lfloor n/2 \rfloor$. Then it is straightforward to check that the poset $\varepsilon Z_n$ is naturally labeled. Figure \ref{fig:Zn} shows the Hasse diagrams of $Z_6$, $\varepsilon Z_6$, $Z_7$, and $\varepsilon Z_7$.

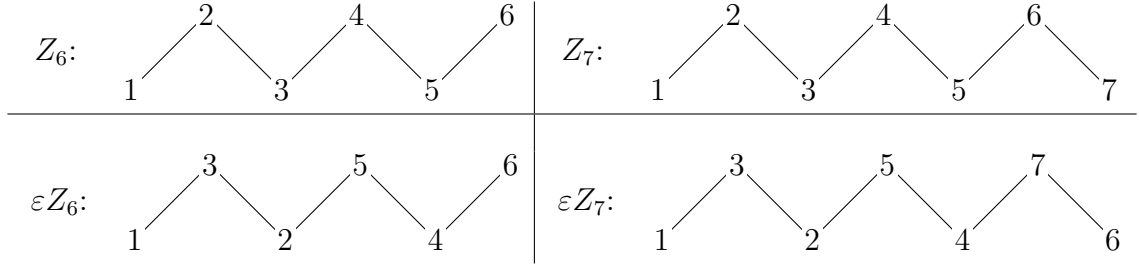
\begin{figure}
\[
\begin{array}{c|c}
\begin{tikzpicture}
  \draw (0,.5) node {$Z_6$:};
  \draw (1,0) node[fill=white,inner sep=1] {$1$}-- (2,1) node[fill=white,inner sep=1] {$2$} -- (3,0) node[fill=white,inner sep=1] {$3$} -- (4,1) node[fill=white,inner sep=1] {$4$}-- (5,0) node[fill=white,inner sep=1] {$5$}-- (6,1) node[fill=white,inner sep=1] {$6$};
 \end{tikzpicture}
 &
 \begin{tikzpicture}
  \draw (0,.5) node {$Z_7$:};
  \draw (1,0) node[fill=white,inner sep=1] {$1$}-- (2,1) node[fill=white,inner sep=1] {$2$} -- (3,0) node[fill=white,inner sep=1] {$3$} -- (4,1) node[fill=white,inner sep=1] {$4$}-- (5,0) node[fill=white,inner sep=1] {$5$}-- (6,1) node[fill=white,inner sep=1] {$6$}-- (7,0) node[fill=white,inner sep=1] {$7$};
 \end{tikzpicture}
\\
\hline
\\
\begin{tikzpicture}
  \draw (0,.5) node {$\varepsilon Z_6$:};
  \draw (1,0) node[fill=white,inner sep=1] {$1$}-- (2,1) node[fill=white,inner sep=1] {$3$} -- (3,0) node[fill=white,inner sep=1] {$2$} -- (4,1) node[fill=white,inner sep=1] {$5$}-- (5,0) node[fill=white,inner sep=1] {$4$}-- (6,1) node[fill=white,inner sep=1] {$6$};
 \end{tikzpicture}
 &
 \begin{tikzpicture}
  \draw (0,.5) node {$\varepsilon Z_7$:};
  \draw (1,0) node[fill=white,inner sep=1] {$1$}-- (2,1) node[fill=white,inner sep=1] {$3$} -- (3,0) node[fill=white,inner sep=1] {$2$} -- (4,1) node[fill=white,inner sep=1] {$5$}-- (5,0) node[fill=white,inner sep=1] {$4$}-- (6,1) node[fill=white,inner sep=1] {$7$}-- (7,0) node[fill=white,inner sep=1] {$6$};
 \end{tikzpicture}
\end{array}
\]
\caption{The Hasse diagrams for the zig-zag posets $Z_6$ and $Z_7$, and the naturally labeled posets $\varepsilon Z_6$ and $\varepsilon Z_7$.}\label{fig:Zn}
\end{figure}

Our primary objects of study in this paper are the order polynomials and Eulerian polynomials for $\varepsilon Z_n$. To simplify notation, we write 
\[
 Z_n(t) = A_{\varepsilon Z_n}(t) \quad \mbox{ and } \quad \op_n(m) = \op_{\varepsilon Z_n}(m),
\]
which we refer to as the $n$th \emph{zig-zag Eulerian polynomial} and \emph{zig-zag order polynomial}, respectively. We denote by $z(n,k)$ the coefficient of $t^{k+1}$ in $Z_n(t)$, so that
\[
 z(n,k) = |\{\, w \in \LL(\varepsilon Z_n) : \des(w) = k \,\}|;
\]
we call these \emph{zig-zag Eulerian numbers}.

For example, we see the linear extensions of $\varepsilon Z_5$ in Table \ref{tab:zigex}, grouped according to the number of descents. From this we conclude $z(5,0)=z(5,3)=1$ and $z(5,1)=z(5,2)=7$, so that $Z_5(t) = t+7t^2+7t^3+t^4$. Therefore, from Equation \eqref{eq:ogf} we find
\begin{equation}\label{eq:G5}
 \frac{ t+7t^2+7t^3+t^4}{(1-t)^6} = \sum_{m\geq 0} \op_n(m) t^m = t+13t^2 + 70t^3 + 246t^4 + 671t^5+\cdots .
\end{equation}
Since $(1-t)^{-6} = \sum_{m\geq 0} \binom{m+5}{5} t^m$, we have
\begin{align*}
 \op_5(m) &= \binom{m+4}{5} + 7\binom{m+3}{5} + 7\binom{m+2}{5} + \binom{m+1}{5} \\
 &= \frac{4m+20m^2+40m^3+40m^4+16m^5}{5!}.
\end{align*}

\begin{table}
\[
\begin{array}{c | c | c | c}
 \des(w)=0 & \des(w)=1 & \des(w)=2 & \des(w)=3 \\
 \hline
 \hline
 12345 & 124|35 & 14|25|3 & 4|2|15|3 \\
       & 1245|3 & 2|14|35 & \\
       & 14|235 & 2|145|3 & \\
       & 2|1345 & 24|15|3 & \\
       & 24|135 & 4|125|3 & \\
       & 245|13 & 4|2|135 & \\
       & 4|1235 & 4|25|13 & 
\end{array}
\]
\caption{The linear extensions in $\LL(\varepsilon Z_5)$, grouped according to the number of descents, which are marked with vertical bars.}\label{tab:zigex}
\end{table}

Table \ref{tab:znk} shows the values of $z(n,k)$ for $n\leq 10$, and Table \ref{tab:opnk} shows values of $\op_n(m)$ for $n\leq 10$ and $m\leq 8$. (We have defined $\op_0(m)=1$ for convenience.) The array of numbers $\op_n(m)$ can be found in entry A050446 of the OEIS, while the numbers $z(n,k)$ are the antidiagonals of A205497 \cite{oeis}.

\begin{table}
\[
 \begin{array}{r|ccccccccc}
  n\backslash k & 0 & 1 & 2 & 3 & 4 & 5 & 6 & 7 & 8 \\
  \hline 
  1 & 1 \\
  2 & 1 \\
  3 & 1 & 1 \\
  4 & 1 & 3 & 1 \\
  5 & 1 & 7 & 7 & 1\\
  6 & 1 & 14 & 31 & 14 & 1 \\
  7 & 1 & 26 & 109 & 109 & 26 & 1 \\
  8 & 1 & 46 & 334 & 623 & 334 & 46 & 1 \\
  9 & 1 & 79 & 937 & 2951 & 2951 & 937 & 79 & 1 \\
  10 & 1 & 133 & 2475 & 12331 & 20641 & 12331 & 2475 & 133 & 1
 \end{array}
\]
\caption{Table of values of $z(n,k)$ for $1\leq n \leq 10$ and $0\leq k \leq n-2$.}\label{tab:znk}
\end{table}

\begin{table}
\[
 \begin{array}{r|ccccccccc}
  n\backslash m & 1 & 2 & 3 & 4 & 5 & 6 & 7 & 8 \\
  \hline 
  0 & 1 & 1 & 1 & 1 & 1 & 1 & 1 & 1 \\
  1 & 1 & 2 & 3 & 4 & 5 & 6 & 7 & 8 \\
  2 & 1 & 3 & 6 & 10 & 15 & 21 & 28 & 36 \\
  3 & 1 & 5 & 14 & 30 & 55 & 91 & 140 & 204 \\
  4 & 1 & 8 & 31 & 85 & 190 & 371 & 658 & 1086 \\
  5 & 1 & 13 & 70 & 246 & 671 & 1547 & 3164 & 5916\\
  6 & 1 & 21 & 157 & 707 & 2353 & 6405 & 15106 & 31998 \\
  7 & 1 & 34 & 353 & 2037 & 8272 & 26585 & 72302 & 173502 \\
  8 & 1 & 55 & 793 & 5864 & 29056 & 110254 & 345775 & 940005 \\
  9 & 1 & 89 & 1782 & 16886 & 102091 & 457379 & 1654092 & 5094220 \\
  10 & 1 & 144 & 4004 & 48620 & 358671 & 1897214 & 7911970 & 27604798
 \end{array}
\]
\caption{Table of values of $\op_n(m)$ for $0\leq n \leq 10$ and $1\leq m \leq 8$.}\label{tab:opnk}
\end{table}

\begin{rem}[Different natural labelings]
A different natural labeling of the zig-zag poset, $\omega Z_n$, appears in  \cite[Example 3.2]{Stanley1980} and \cite[Exercise 3.66]{EC1}. With $k=\lceil n/2 \rceil$, this poset is defined by the relations $i <_P (k+i) >_P i+1$ for $1 \leq i \leq k-1$, and $k <_P 2k$ if $n=2k$. The linear extensions of this poset are not the same as the linear extensions of the poset $\varepsilon Z_n$. However, the $P$-partitions of any naturally labeled poset  are just the order-preserving maps $P\to \NN$, so $\A(\omega Z_n) = \A(\varepsilon Z_n)$ and the two labeled posets have the same order polynomial and the same $P$-Eulerian polynomial.
\end{rem}

\subsection{Up-down versus down-up}

We have defined the ``up-down'' zig-zag poset $Z_n$, but we could also study the ``down-up'' zig-zag poset  $1>_P 2 <_P 3 >_P \cdots$, which we denote $\bar Z_n$. Letting $o = s_1 s_3 \cdots s_{2k-1}$, we see that $o \bar Z_n$ is naturally labeled; see Figure \ref{fig:barZn}. We claim that the up-down and down-up versions of the order polynomial and Eulerian polynomial are the same.

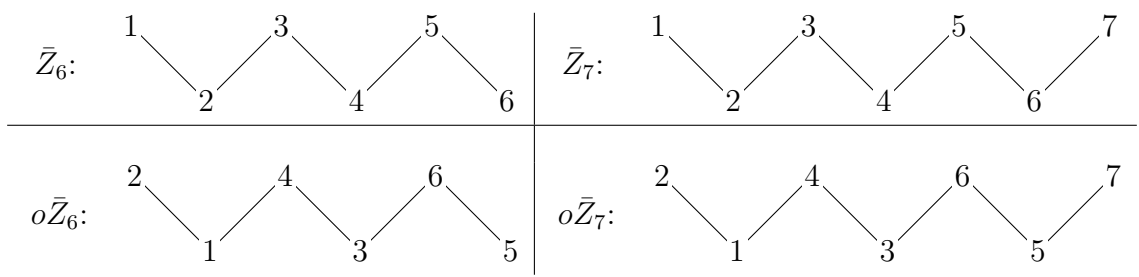
\begin{figure}
\[
\begin{array}{c|c}
\begin{tikzpicture}
  \draw (0,.5) node {$\bar Z_6$:};
  \draw (1,1) node[fill=white,inner sep=1] {$1$}-- (2,0) node[fill=white,inner sep=1] {$2$} -- (3,1) node[fill=white,inner sep=1] {$3$} -- (4,0) node[fill=white,inner sep=1] {$4$}-- (5,1) node[fill=white,inner sep=1] {$5$}-- (6,0) node[fill=white,inner sep=1] {$6$};
 \end{tikzpicture}
 &
 \begin{tikzpicture}
  \draw (0,.5) node {$\bar Z_7$:};
  \draw (1,1) node[fill=white,inner sep=1] {$1$}-- (2,0) node[fill=white,inner sep=1] {$2$} -- (3,1) node[fill=white,inner sep=1] {$3$} -- (4,0) node[fill=white,inner sep=1] {$4$}-- (5,1) node[fill=white,inner sep=1] {$5$}-- (6,0) node[fill=white,inner sep=1] {$6$}-- (7,1) node[fill=white,inner sep=1] {$7$};
 \end{tikzpicture}
\\
\hline
\\
\begin{tikzpicture}
  \draw (0,.5) node {$o \bar Z_6$:};
  \draw (1,1) node[fill=white,inner sep=1] {$2$}-- (2,0) node[fill=white,inner sep=1] {$1$} -- (3,1) node[fill=white,inner sep=1] {$4$} -- (4,0) node[fill=white,inner sep=1] {$3$}-- (5,1) node[fill=white,inner sep=1] {$6$}-- (6,0) node[fill=white,inner sep=1] {$5$};
 \end{tikzpicture}
 &
 \begin{tikzpicture}
  \draw (0,.5) node {$o \bar Z_7$:};
  \draw (1,1) node[fill=white,inner sep=1] {$2$}-- (2,0) node[fill=white,inner sep=1] {$1$} -- (3,1) node[fill=white,inner sep=1] {$4$} -- (4,0) node[fill=white,inner sep=1] {$3$}-- (5,1) node[fill=white,inner sep=1] {$6$}-- (6,0) node[fill=white,inner sep=1] {$5$}-- (7,1) node[fill=white,inner sep=1] {$7$};
 \end{tikzpicture}
\end{array}
\]
\caption{The Hasse diagrams for the down-up zig-zag posets $\bar Z_6$ and $\bar Z_7$, and the naturally labeled posets $o \bar Z_6$ and $o \bar Z_7$.}\label{fig:barZn}
\end{figure}

\begin{thm}\label{thm:ZbarZ}
For all $n \geq 1$,
\begin{equation}
 \op_n(m) = \op_{o \bar Z_n}(m) \label{e-opuddu} 
\end{equation}
and hence
\[
 Z_n(t) = A_{o \bar Z_n}(t).
\]
\end{thm}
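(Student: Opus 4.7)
The plan is to prove the equality of order polynomials directly via a \emph{complementation} bijection on bounded $P$-partitions, and then deduce the equality of the $P$-Eulerian polynomials from the rational generating function identity \eqref{eq:ogf}. As noted in the Remark on different natural labelings, for any naturally labeled poset the set of $P$-partitions is precisely the set of order-preserving maps $P \to \NN$, so $\op_{\varepsilon Z_n}(m)$ counts order-preserving maps $f\colon Z_n \to [m]$ (where we use the underlying zig-zag shape), and $\op_{o \bar Z_n}(m)$ counts order-preserving maps $g\colon \bar Z_n \to [m]$. The heart of the argument will be identifying a bijection between these two sets.

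The bijection I have in mind is $f \mapsto g$ where $g(i) = m+1-f(i)$. To justify this, I would observe that the Hasse diagrams of $Z_n$ and $\bar Z_n$ are related by flipping upside down: at each cover relation one becomes the other. Concretely, an order-preserving map from $Z_n$ to $[m]$ is a sequence $a_1 \leq a_2 \geq a_3 \leq \cdots$ with $a_i \in [m]$, while an order-preserving map from $\bar Z_n$ to $[m]$ is a sequence $b_1 \geq b_2 \leq b_3 \geq \cdots$ with $b_i \in [m]$. The substitution $b_i = m+1-a_i$ is an involution on $[m]^n$ that exchanges $\leq$ and $\geq$ at every position, so it restricts to a bijection between the two sets of $P$-partitions. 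This immediately yields \eqref{e-opuddu}.

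Once the order polynomials agree, the equality $Z_n(t)=A_{o\bar Z_n}(t)$ follows formally: by \eqref{eq:ogf} applied to both $\varepsilon Z_n$ and $o\bar Z_n$, the rational function $A_P(t)/(1-t)^{n+1}$ has the same power series expansion in both cases, so the numerator polynomials (each of degree at most $n$) must coincide.

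I do not foresee any serious obstacle; the only point that deserves a sentence of care is the remark that, for naturally labeled posets, $\A(P)$ depends only on the underlying unlabeled poset, which is exactly what allows us to transport the complementation bijection between unlabeled shapes to a bijection between the relevant sets of $P$-partitions. After that, the passage from order polynomials to $P$-Eulerian polynomials is automatic.
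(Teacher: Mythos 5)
Your proof is correct and is essentially the paper's own argument, just phrased more abstractly. The paper writes out the bijection $f \mapsto go$ where $go(i) = m+1 - f(\varepsilon(o(i)))$, which is exactly your complementation on order-preserving maps after unwinding the identifications $\A(\varepsilon Z_n;m) \cong \{\text{order-preserving maps } Z_n \to [m]\}$ (via precomposition by $\varepsilon$) and $\A(o\bar Z_n;m) \cong \{\text{order-preserving maps } \bar Z_n \to [m]\}$ (via precomposition by $o$); invoking the remark on natural labelings to absorb those identifications is the only real difference in exposition.
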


\begin{proof}
We will construct an explicit bijection between $\A(\varepsilon Z_n; m)$ and $\A(o \bar Z_n; m)$, from which the results follow.

Let $f \in \A(\varepsilon Z_n; m)$, i.e.,
\[
 f(1) \leq f(3) \geq f(2) \leq f(5) \geq \cdots.
\]
Then we see 
\[
f(\varepsilon(1)) \leq f(\varepsilon(2)) \geq f(\varepsilon(3)) \leq f(\varepsilon(4)) \geq \cdots,
\]
and $f\varepsilon$ is an order-preserving map $Z_n \to [m]$.

Let $g$ be defined by $g(i) = m+1-(f\varepsilon)(i)$, so that
\[
 g(1) \geq g(2) \leq g(3) \geq g(4) \leq \cdots,
\]
and we see that $g$ is an order-preserving map $\bar Z_n \to [m]$. But then
\[
 g(o(2)) \geq g(o(1)) \leq g(o(4)) \geq g(o(3)) \leq \cdots,
\]
i.e., $go$ is an element of $\A(o \bar Z_n; m)$.

The correspondence $f \leftrightarrow go$ is clearly bijective, and the result follows.
\end{proof}

\subsection{Gamma-nonnegativity}\label{sec:gamma}

It has been remarked several times in the literature that the coefficients of $Z_n(t)$ are symmetric and unimodal, i.e., $z(n,k) = z(n,n-2-k)$ and 
\[
 z(n,0)\leq z(n,1) \leq \cdots \leq z(n,\lfloor (n-2)/2 \rfloor) \geq \cdots \geq z(n,n-2).
\]
In particular, unimodality was conjectured by Kirillov \cite[Conjecture 3.11]{Kirillov2000} and proved by Chen and Zhang \cite{ChenZhang2016}, and also by Coons and Sullivant \cite[Theorem 32]{CoonsSullivant2023}. (All these authors were working in the context of simplicial polytopes, in which symmetry follows from the Dehn--Sommerville equations.) Here we will make a stronger claim, that $Z_n(t)$ is \emph{gamma-nonnegative}. Let us recall this property.

Suppose $h(t) = \sum_i h_i t^i$ is a polynomial with integer coefficients such that $t^d h(1/t) = h(t)$ for some positive integer $d$.  Then the coefficients of $h$ are symmetric---i.e., $h_i = h_{d-i}$ for all $i$---and hence we will say that $h$ is \emph{palindromic}. It is easy to verify that if such a $d$ exists, it is the sum of largest and smallest powers of $t$ with nonzero coefficients in $h$. We define this integer $d$ to be the \emph{palindromic degree} of $h$. 

Palindromic polynomials of palindromic degree $d$ span a vector space of dimension roughly $d/2$, and with this in mind, we can express such polynomials in the basis
\[
 \Gamma_d = \{ t^i (1+t)^{d-2i} \}_{0\leq 2i \leq d}.
\]
That is, if $h$ is palindromic, there is a vector $\gamma = (\gamma_0, \gamma_1,\ldots, \gamma_{\lfloor d/2 \rfloor})$ such that
\[
 h(t) = \sum_{0 \leq 2i \leq d } \gamma_i t^i(1+t)^{d-2i}.
\]
Since each element of $\Gamma_d$ is unimodal and palindromic with the same center of symmetry, it follows that if $h$ has a nonnegative $\gamma$-vector, then its coefficients are also unimodal: $h_0 \leq h_1 \leq \cdots \leq h_{\lfloor d/2 \rfloor} \geq \cdots \geq h_{d-1} \geq h_d$. 

While the fact that the classical Eulerian polynomials are gamma-nonnegative dates back to the work of Foata and Sch\"utzenberger \cite[Theorem 5.2]{FS}, the more general notion of gamma-nonnegativity first arose in work of Gal \cite{Gal}, in the context of combinatorial topology. There is a growing literature on gamma-nonnegativity; introductions to the topic can be found in \cite{Branden2015}, \cite[Chapter 4]{Petersen2015}, or \cite{ath}.

A result of Br\"and\'en \cite[Theorem 4.2]{Br} (see also \cite[Theorem 6.1]{Br2} and \cite[Corollary 7.10]{Stembridge2007}) says that the $P$-Eulerian polynomial of any sign-graded poset of rank $r$ is gamma-nonnegative in the basis $\Gamma_{n-r-1}$. Actually, the statement is phrased in terms of the descent generating function without the extra factor of $t$, so in our notation it says that $A_P(t)/t$ is gamma-nonnegative in the basis $\Gamma_{n-r-1}$. In particular, naturally labeled graded posets are sign-graded. As the zig-zag poset $\varepsilon Z_n$ is naturally labeled and graded of rank 1, we can conclude the following result.

\begin{thm} \label{t-gamma}
For each $n \geq 1$, the polynomial $Z_n(t)/t$ has a nonnegative expansion in the basis $\Gamma_{n-2}$, i.e., there exist nonnegative integers $\gamma_{n,j}$ such that
\[
 Z_n(t) = t\cdot\sum_{0\leq 2j\leq n-2} \gamma_{n,j} t^j(1+t)^{n-2-2j} = \sum_{0\leq 2j\leq n-2} \gamma_{n,j}t^{j+1}(1+t)^{n-2(j+1)}.
\]
In particular, $Z_n(t)$ has degree $n-1$, palindromic degree $n$, and the coefficents $\{z(n,k)\}_{0\leq k \leq n-2}$ are symmetric and unimodal.
\end{thm}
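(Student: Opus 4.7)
The plan is to apply the sign-graded poset theorem of Brändén cited in the paragraph preceding the statement; essentially all of the substantive work is done by that result, and what remains is to verify its hypotheses for $\varepsilon Z_n$ and then to extract the degree and palindromic degree from the resulting $\Gamma_{n-2}$-expansion.

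First I would check that $\varepsilon Z_n$ is a naturally labeled graded poset of rank $1$. Naturality is built into the definition of $\varepsilon$. For the grading, assign $\rho(i) = 0$ if $i$ sits at a ``valley'' of the zig-zag and $\rho(i) = 1$ if $i$ sits at a ``peak'' (visible directly from the Hasse diagrams in Figure \ref{fig:Zn}); every cover relation joins a valley to a peak, so $\rho$ is a valid rank function and the poset is graded of rank $1$. Since naturally labeled graded posets are sign-graded, Brändén's theorem applies with $r=1$, producing nonnegative integers $\gamma_{n,j}$ such that
\[
Z_n(t)/t \;=\; A_{\varepsilon Z_n}(t)/t \;=\; \sum_{0 \le 2j \le n-2} \gamma_{n,j}\, t^{j}(1+t)^{n-2-2j}.
\]
Multiplying both sides by $t$ gives the claimed expansion of $Z_n(t)$.

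To pin down the degree and palindromic degree, I would observe that the basis element $t^{j+1}(1+t)^{n-2(j+1)}$ contributes monomials in $t$ of degrees between $j+1$ and $n-1-j$, so only the $j=0$ term contributes to the $t^1$ coefficient (and symmetrically to the $t^{n-1}$ coefficient). Hence both extremal coefficients of $Z_n(t)$ equal $\gamma_{n,0}$, and this common value is $z(n,0)$, the number of linear extensions of $\varepsilon Z_n$ with no descents. Since the identity permutation is the unique descent-free linear extension of any naturally labeled poset, $z(n,0) = 1$, forcing $\gamma_{n,0} = 1 > 0$. Therefore $Z_n(t)$ has degree exactly $n-1$, lowest power $t$, and palindromic degree $n$. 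Symmetry and unimodality follow from the standard observation that each basis element $t^{j+1}(1+t)^{n-2(j+1)}$ is itself palindromic and unimodal with common center $n/2$, and nonnegative combinations of palindromic unimodal polynomials with a shared center remain palindromic and unimodal.

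The main ``obstacle'' here is really only bookkeeping. The only substantive check is that the rank parameter $r$ in Brändén's basis $\Gamma_{n-r-1}$ matches the rank of $\varepsilon Z_n$; I would verify this conventionally against the original statement. Everything else, including the extraction of the degree and the inheritance of symmetry and unimodality, is routine manipulation of the $\Gamma_{n-2}$-expansion.
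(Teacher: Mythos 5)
Your proposal is correct and follows essentially the same route as the paper: both verify that $\varepsilon Z_n$ is a naturally labeled graded poset of rank $1$, hence sign-graded, and then invoke Br\"and\'en's theorem to obtain the nonnegative expansion in $\Gamma_{n-2}$. Your additional bookkeeping---identifying the explicit rank function and deducing $\gamma_{n,0}=z(n,0)=1$ to pin down the degree and palindromic degree---is sound and only makes explicit what the paper leaves implicit.
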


For example, $Z_5(t) = t(1+t)^3+4t^2(1+t)$ so $\gamma_{5,0} = 1$ and $\gamma_{5,1} =4$. The $\gamma$-vectors for $n\leq 10$ are displayed in Table \ref{tab:gamma}.

\begin{table}
\[
 \begin{array}{r|ccccccccc}
  n\backslash k & 0 & 1 & 2 & 3 & 4  \\
  \hline 
  1 & 1 \\
  2 & 1 \\
  3 & 1 &  \\
  4 & 1 & 1 & \\
  5 & 1 & 4 &  & \\
  6 & 1 & 10 & 5 &   \\
  7 & 1 & 21 & 36 &  \\
  8 & 1 & 40 & 159 & 45 \\
  9 & 1 & 72 & 556 & 528  \\
  10 & 1 & 125 & 1697 & 3612 & 665 
 \end{array}
\]
\caption{Table of values of $\gamma_{n,j}$ for $1\leq n \leq 10$ and $0\leq 2j \leq n-2$.}\label{tab:gamma}
\end{table}

\section{Alternating permutations and relaxed $P$-partitions} \label{s-altrel}

In this section, we show that the zig-zag Eulerian polynomials encode the distribution of a permutation statistic over alternating permutations.

\subsection{Big returns for alternating permutations}\label{sec:alt}

Recall that an \emph{up-down alternating permutation} is a permutation $u\in S_n$ such that $u(1)<u(2)>u(3)<\cdots$, i.e., $i$ is a descent if and only if $i$ is even. We denote by $U_n$ the set of all such permutations. Similarly, a \emph{down-up alternating permutation} is a permutation $v \in S_n$ such that $v(1) > v(2) < v(3) > \cdots$, and we let $\bar U_n$ denote the set of down-up alternating permutations. It is well known that $|U_n| = |\bar U_n| = E_n$, where $E_n$ denotes the $n$th Euler number. This sequence of cardinalities begins
\[
 1,1,2, 5, 16, 61, 272, 1385, \ldots.
\]
 
The set of alternating permutations is a rich and well-studied subset of $S_n$; see \cite{Stanley2010} for a survey of alternating permutations and some of their connections to other parts of mathematics. In particular, the following observation shows that the number of linear extensions of any labeling of $Z_n$ (resp.\ $\bar Z_n$) is equal to the number of up-down (resp.\ down-up) alternating permutations in $S_n$. 

\begin{obs}\label{obs:ZU}
For any $n \geq 1$ and permutation $\sigma$,
\begin{itemize}
\item $\pi \in \LL(\sigma Z_n)$ if and only if $\pi^{-1}\sigma \in U_n$, and
\item $\pi \in \LL(\sigma \bar Z_n)$ if and only if $\pi^{-1}\sigma \in \bar U_n$.
\end{itemize}
\end{obs}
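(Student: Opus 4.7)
The plan is to reduce both statements to a direct manipulation of the defining inequalities, since the observation is essentially a translation between two ways of recording the same zig-zag comparison data: the description of a linear extension, and the one-line notation of an alternating permutation.

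First I would record the cover relations. The zig-zag poset $Z_n$ has cover relations $2i-1 <_{Z_n} 2i$ and $2i+1 <_{Z_n} 2i$ for all valid $i$, and every other pair of elements is incomparable. Because the left $S_n$-action relabels each $j$ as $\sigma(j)$ while preserving the order relation, the cover relations of $\sigma Z_n$ are exactly $\sigma(2i-1) <_{\sigma Z_n} \sigma(2i)$ and $\sigma(2i+1) <_{\sigma Z_n} \sigma(2i)$.

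Next, by the definition of $\LL$ given in Section \ref{s-bg}, $\pi\in\LL(\sigma Z_n)$ is equivalent to the simultaneous inequalities $\pi^{-1}(\sigma(2i-1)) < \pi^{-1}(\sigma(2i))$ and $\pi^{-1}(\sigma(2i+1)) < \pi^{-1}(\sigma(2i))$ for every valid $i$. Setting $\tau = \pi^{-1}\sigma$, so that $\tau(j) = \pi^{-1}(\sigma(j))$, these conditions become $\tau(2i-1) < \tau(2i)$ and $\tau(2i+1) < \tau(2i)$, i.e., $\tau(1) < \tau(2) > \tau(3) < \tau(4) > \cdots$. This is precisely the definition of $\tau\in U_n$. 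Since $\pi \mapsto \pi^{-1}\sigma$ is a bijection on $S_n$, the biconditional in the first bullet follows.

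The argument for the second bullet is identical after reversing all of the inequalities: the cover relations of $\bar Z_n$ are $2i <_{\bar Z_n} 2i-1$ and $2i <_{\bar Z_n} 2i+1$, and the same substitution $\tau = \pi^{-1}\sigma$ translates $\pi\in\LL(\sigma\bar Z_n)$ into $\tau(1) > \tau(2) < \tau(3) > \cdots$, i.e., $\tau\in\bar U_n$. There is no real obstacle to overcome; the only thing to be careful about is the composition order and the convention that $\sigma$ acts by relabeling, both of which are handled by the substitution $\tau = \pi^{-1}\sigma$.
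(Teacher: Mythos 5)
Your proof is correct and follows essentially the same route as the paper: translate $\pi\in\LL(\sigma Z_n)$ into the chain of inequalities $\pi^{-1}(\sigma(1)) < \pi^{-1}(\sigma(2)) > \pi^{-1}(\sigma(3)) < \cdots$, recognize this as the defining condition for $\pi^{-1}\sigma \in U_n$, and reverse all inequalities for the down-up case. Your version spells out the cover relations and the substitution $\tau = \pi^{-1}\sigma$ a bit more explicitly, but the argument is the same.
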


\begin{proof}
We have $\pi \in \LL(\sigma Z_n)$ if and only if $\sigma(1)<_{\pi} \sigma(2) >_{\pi} \sigma(3) <_{\pi} \cdots$, i.e., 
\[
 \pi^{-1}(\sigma(1)) < \pi^{-1}(\sigma(2)) > \pi^{-1}(\sigma(3)) < \cdots .
\]
That is, $\pi \in \LL(\sigma Z_n)$ if and only if $\pi^{-1}\sigma \in U_n$. The same argument with all inequalities reversed shows that $\pi \in \LL(\sigma\bar Z_n)$ if and only if $\pi^{-1}\sigma \in \bar U_n$.
\end{proof}

As alternating permutations are so well-studied, it seems worthwhile to find a combinatorial interpretation for $Z_n(t)$ in terms of a permutation statistic for the set $U_n$.  As first observed by Coons and Sullivant \cite{CoonsSullivant2023}, one such statistic is the number of \emph{big returns} (they call these ``swaps''), as we will show following some definitions. 

First, we say that a descent $i$ of $w$ is a \emph{big descent} of $w$ if $w(i)>w(i+1)+1$. In fact, we can define---for any $r\geq 0$---an $r$-descent set and $r$-descent number as follows:
\[
 \Des_r(w) = \{\, i : w(i)>w(i+1)+r \,\} \quad \mbox{ and } \quad \des_r(w)=\left|\Des_r(w)\right|.
\] 
Note that big descents correspond to the $r=1$ case. The study of the statistic $\des_r$ over the full symmetric group has many similarities to the classical ($r=0$) Eulerian distribution; see Foata and Sch\"utzenberger \cite[Section II.6]{FS}.

A \emph{return} of a permutation $w$ is a descent of its inverse permutation $w^{-1}$. In other words, a return is a value $j$ such that $j$ appears to the right of $j+1$ when $w$ is written in one-line notation. Returns are also known as ``left descents'' or simply ``inverse descents'' in the literature. We extend this notion to $r$-returns via the following definition:
\[
 \Ret_r(w) = \Des_r(w^{-1}) \quad \mbox{ and } \quad \ret_r(w) = \left|\Ret_r(w)\right|.
\]
\emph{Big returns} correspond to the $r=1$ case and ordinary returns correspond to $r=0$, for which we simplify notation by writing $\Ret(w)=\Ret_0(w)$ and $\ret(w) = \ret_0(w)$. 

For example, $w=579842316$ has $\Des(w) = \{3,4,5,7\}$, $\Des_1(w) = \{ 4,5,7\}$, $\Ret(w) = \{1,3,4,6,8\}$, and $\Ret_1(w) = \{1,3,4,6\}$, so $\des(w) = 4$, $\des_1(w) =3$, $\ret(w) = 5$, and $\ret_1(w) = 4$.

\begin{table}
\[
\begin{array}{c | c | c | c}
 \ret_1(u)=0 & \ret_1(u)=1 & \ret_1(u)=2 & \ret_1(u)=3 \\
 \hline
 \hline
 13254 & 1425\mathbf{3} & 153\mathbf{4}\mathbf{2} & 35\mathbf{2}\mathbf{4}\mathbf{1} \\
       & 1435\mathbf{2} & 24\mathbf{1}5\mathbf{3} & \\
       & 152\mathbf{4}3 & 25\mathbf{1}\mathbf{4}3 & \\
       & 23\mathbf{1}54 & 253\mathbf{4}\mathbf{1} & \\
       & 2435\mathbf{1} & 34\mathbf{2}5\mathbf{1} & \\
       & 3415\mathbf{2} & 351\mathbf{4}\mathbf{2} & \\
       & 451\mathbf{3}2 & 452\mathbf{3}\mathbf{1} & 
\end{array}
\]
\caption{The alternating permutations in $U_5$, grouped according to the number of big returns, which are highlighted in bold.}\label{tab:altex}
\end{table}

Define
\[
 U_n(t) = \sum_{u \in U_n} t^{\ret_1(u)} = \sum_{k\geq 0} u(n,k) t^k,
\]
so that $u(n,k)$ is the number of alternating permutations with $k$ big returns, i.e.,
\[
 u(n,k) = \left|\{\, u \in U_n : \ret_1(u) = k \,\}\right|.
\]
In Table \ref{tab:altex} we see the alternating permutations in $U_5$, grouped according to the number of big returns, so $U_5(t) = 1+7t+7t^2+t^3$; compare with Table \ref{tab:zigex}. We will give two proofs of the following result in Section \ref{sec:proof1}. This result is essentially \cite[Theorem 13]{CoonsSullivant2023}, though our proofs are different.

\begin{thm}\label{thm:ZU}
The number of big returns over $U_n$ is distributed the same as the number of descents over linear extensions of $\varepsilon Z_n$. That is, for each $n\geq 1$, we have $u(n,k)=z(n,k)$ for all $k$, and
\[
 Z_n(t) = \sum_{w\in \LL(\varepsilon Z_n)} t^{1+\des(w)} = \sum_{u \in U_n} t^{1+\ret_1(u)} = tU_n(t).
\]
\end{thm}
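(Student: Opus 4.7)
The plan is to combine Observation~\ref{obs:ZU} with a term-by-term descent analysis. The observation already supplies the bijection $\Phi\colon \LL(\varepsilon Z_n) \to U_n$ defined by $\Phi(\pi) = \pi^{-1}\varepsilon$, so it suffices to show that $\Phi$ preserves the relevant statistics: $\des(\pi) = \ret_1(\Phi(\pi))$ for every $\pi \in \LL(\varepsilon Z_n)$. Setting $u = \Phi(\pi)$ and using that $\varepsilon$ is an involution, we have $u^{-1} = \varepsilon\pi$, so
\[
 \ret_1(u) = \des_1(\varepsilon\pi) = |\{\, i : \varepsilon(\pi(i)) > \varepsilon(\pi(i+1)) + 1 \,\}|,
\]
and the goal reduces to matching this set position-by-position with $\{i : \pi(i) > \pi(i+1)\}$.

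To match them, I would perform a case analysis on the parities of $a = \pi(i)$ and $b = \pi(i+1)$. Since $\varepsilon$ fixes $1$ (and fixes $n$ when $n$ is even) and otherwise swaps $2j \leftrightarrow 2j+1$, each value changes by at most $1$ under $\varepsilon$, and the signed change depends only on parity. A short verification yields two facts. First, if $a \leq b$ then $\varepsilon(a) \leq \varepsilon(b)+1$, so a non-descent of $\pi$ never gives a big descent of $\varepsilon\pi$. Second, if $a > b$ then $\varepsilon(a) > \varepsilon(b) + 1$ \emph{unless} $(a, b)$ is the reverse of a cover relation of $\varepsilon Z_n$---namely $(a, b) \in \{(3, 1)\} \cup \{(2k+1, 2k), (2k+3, 2k) : k \geq 1\}$, together with $(n, n-2)$ when $n$ is even. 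But these reverse-cover pairs are exactly the adjacencies forbidden by the linear extension condition, since each cover $b <_{\varepsilon Z_n} a$ forces $\pi^{-1}(b) < \pi^{-1}(a)$. Hence descents of $\pi$ correspond one-to-one to big descents of $\varepsilon\pi$, yielding $\des(\pi) = \ret_1(u)$.

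The chief obstacle will be managing the small but fiddly boundary cases involving the values $1$ and $n$ (the latter behaving differently depending on the parity of $n$), since these are exactly the values fixed by $\varepsilon$ and so deviate from the generic pattern. The case analysis is entirely elementary, but it must be laid out carefully so that no adjacency is double-counted or missed.

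As an alternative---and likely the route suggested by the paper's reference to Section~\ref{s-relaxed}---one could bypass the case analysis entirely via a generating-function argument. The plan would be to introduce a ``relaxed'' variant of $P$-partitions for which an analogue of Lemma~\ref{thm:ftpp} decomposes the relaxed $\varepsilon Z_n$-partitions as a disjoint union indexed by $U_n$, with the $u$-block bounded by $m$ having cardinality $\binom{m+n-1-\ret_1(u)}{n}$. Combining this with a bijection showing that the total count still equals $\op_n(m)$ would give $\sum_{m\geq 0}\op_n(m)t^m = tU_n(t)/(1-t)^{n+1}$, and comparing with \eqref{eq:ogf} would yield $Z_n(t) = tU_n(t)$ directly.
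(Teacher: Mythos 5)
Your main argument is essentially the paper's own bijective proof (Proposition \ref{prp:bij}): the bijection $\pi \mapsto \pi^{-1}\varepsilon$ from Observation \ref{obs:ZU} plus a parity analysis showing that the adjacent pairs $(a,b)$ with $a>b$ for which $\varepsilon(a)\leq\varepsilon(b)+1$ are precisely the reversed cover relations of $\varepsilon Z_n$, which cannot occur in a linear extension---and your list of exceptional pairs is correct. Your alternative sketch is the paper's other proof via relaxed $P$-partitions, with the one caveat that the relaxed fundamental lemma must be applied to the successive poset $Z_n$ (whose linear extensions are the inverses of the permutations in $U_n$) rather than to $\varepsilon Z_n$, which is not successive.
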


By Theorem \ref{thm:ZbarZ}, we could have also phrased Theorem \ref{thm:ZU} in terms of down-up permutations and linear extensions of $o \bar Z_n$. 

\begin{rem}
From \cite[Theorem 6.4]{Br2} we can obtain a combinatorial description of the numbers $\gamma_{n,j}$ in terms of peaks of linear extensions in $\LL(\varepsilon Z_n)$; this translates to a statistic for permutations in $U_n$, albeit one that is not very nice.
\end{rem}

Coons and Sullivant \cite{CoonsSullivant2023} ask for a direct combinatorial explanation for the symmetry and unimodality of the numbers $u(n,k)$, and we reiterate this desire.

\subsection{Relaxed $P$-partitions} \label{s-relaxed}

To help understand Theorem \ref{thm:ZU} and to derive later results, we shall use a modification of the notion of a $P$-partition, which we now define.

\begin{defn}[Relaxed $P$-partition]
A \emph{relaxed $P$-partition} is an order-preserving map $g: P \to \NN$ such that for $i <_P j$:
\begin{itemize}
\item $g(i) \leq g(j)$ if $i \leq j+1$,
\item $g(i) < g(j)$ if $i > j+1$.
\end{itemize}
\end{defn}

In other words, every ordinary $P$-partition is a relaxed $P$-partition, but there are some relaxed $P$-partitions that are not ordinary $P$-partitions. Specifically, if there is an unnatural pair of the form $i+1<_P i$, we can have $g(i+1)=g(i)$ for a relaxed $P$-partition $g$, whereas all ordinary $P$-partitions $f$ would have $f(i+1)<f(i)$. We denote the set of relaxed $P$-partitions by $\A^1(P)$.

For example, if $n=3$ and $P$ is the poset with $1 <_P 2$ and $3 <_P 2$, its relaxed $P$-partitions $g$ must satisfy
\[
 g(1)\leq g(2) \geq g(3),
\]
so
\[
 \A^1(P) = \{\, (a_1, a_2, a_3)\in  \NN^3 : a_1 \leq a_2 \geq a_3 \,\}.
\]
This is in contrast with its ordinary $P$-partitions, which form a subset $\A(P) \subseteq \A^1(P)$ with $a_3<a_2$. We can write $\A^1(P)$ as the disjoint union
\[
 \A^1(P) = \{ a_1 \leq a_3 \leq a_2 \} \cup \{ a_3 < a_1 \leq a_2 \},
\]
and each of these smaller sets can viewed as the relaxed $P$-partitions for a chain:
\[
 \A^1(P) = \A^1(132) \cup \A^1(312).
\]

However, this example is misleading, since it is not always the case that relaxed $P$-partitions can be grouped according to linear extensions in a disjoint manner. For example, say $Q$ is the poset given by $1<_Q 3 >_Q 2$. Then we see that 
\[
 \A^1(Q) = \{\, (a_1, a_2, a_3)\in  \NN^3 : a_1 \leq a_3 \geq a_2 \,\}
\]
and this can be expressed as a union over linear extensions:
\[
 \A^1(Q) = \A^1(123) \cup \A^1(213).
\] 
But this union is not disjoint, as
\[
 \A^1(123) \cap \A^1(213) = \{\, (a_1, a_2, a_3)\in  \NN^3 : a_1 = a_2 \leq a_3 \,\}.
\]

In general, we can make the following observation. 

\begin{obs}\label{obs:refine}
Let $P$ be a poset and suppose $i < j$ are incomparable in $P$. Let $Q = P\cup \{ i<_Q j\}$ and $R= P\cup \{ j <_R i \}$ be the refinements of $P$ that can be obtained by making $i$ and $j$ comparable. 
We have
\[
 \A^1(P) = \A^1(Q) \cup \A^1(R)
\]
and
\[
\A^1(Q) \cap \A^1(R) = 
\begin{cases} \{\, g \in \A^1(P) : g(i)=g(i+1) \,\}, & \mbox{ if $j=i+1$},\\
 \emptyset, & \mbox{ otherwise}.
 \end{cases}
\]
\end{obs}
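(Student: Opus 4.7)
The plan is to verify both equalities by a case analysis on the values of a candidate function at the incomparable pair. Since $Q$ and $R$ each refine $P$, every defining inequality for a relaxed $P$-partition is also imposed on relaxed $Q$- and $R$-partitions, so the inclusion $\A^1(Q) \cup \A^1(R) \subseteq \A^1(P)$ is immediate.

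For the reverse inclusion, I would fix $g \in \A^1(P)$ and trichotomize on the sign of $g(j) - g(i)$. If $g(i) < g(j)$, then the extra natural relation $i <_Q j$ is satisfied, placing $g$ in $\A^1(Q)$. If $g(i) > g(j)$, then the extra unnatural relation $j <_R i$ is satisfied with strict inequality, which suffices regardless of whether $j = i+1$ or $j > i+1$, so $g \in \A^1(R)$. The tie case $g(i) = g(j)$ always lies in $\A^1(Q)$ because a natural pair permits equality, and it lies in $\A^1(R)$ precisely when the relaxed definition tolerates equality across the unnatural pair $j <_R i$, which is exactly the condition $j = i+1$. In all sub-cases, $g$ falls in $\A^1(Q) \cup \A^1(R)$.

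The intersection description reads off the same analysis: a function $g$ lies in $\A^1(Q) \cap \A^1(R)$ exactly when $g(i) \leq g(j)$ (from $Q$) holds together with whichever of $g(j) < g(i)$ (when $j > i+1$) or $g(j) \leq g(i)$ (when $j = i+1$) is imposed by $R$. The first combination is self-contradictory, forcing an empty intersection when $j > i+1$; the second collapses to $g(i) = g(i+1)$, which is the claimed condition. The main obstacle is purely bookkeeping, namely tracking when strict versus weak inequality is required in the relaxed definition; but once this distinction is handled carefully---using $i < j$ so that only the sub-cases $j = i+1$ and $j > i+1$ need be considered---the argument closes cleanly.
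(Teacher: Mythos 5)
Your trichotomy is the right skeleton (the paper states this as an observation without proof, so there is nothing to compare against directly), but there is a genuine gap: you only ever verify the condition attached to the single adjoined pair $(i,j)$. Since $Q$ and $R$ are posets, adjoining $i<_Q j$ also adjoins, by transitivity, every relation $a<_Q b$ with $a\leq_P i$ and $j\leq_P b$ (and symmetrically for $R$), and the definition of $\A^1$ imposes a weak or strict inequality on each such pair according to whether $a\leq b+1$. This is exactly where relaxed $P$-partitions differ from ordinary ones: the dichotomy ``$a\leq b+1$ versus $a>b+1$'' is not transitive, which is the reason the fundamental lemma holds only for successive posets. For the union claim your argument can be completed: if $g(i)\leq g(j)$ and $a<_Q b$ is an induced relation, then $g(a)\leq g(i)\leq g(j)\leq g(b)$, and if every link is weak then $a\leq i+1\leq j\leq b+1$, so the new pair only requires a weak inequality; if instead $g(j)<g(i)$, then every induced relation of $R$ is satisfied strictly. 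These checks are the actual content of the observation and are missing from your write-up.

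For the intersection claim in the case $j=i+1$, the induced relations are fatal to the statement as you derive it. Take $P$ on $[4]$ with relations $4<_P 3$ and $2<_P 1$ only, and $(i,j)=(2,3)$. Then $R$ is the chain $4<_R 3<_R 2<_R 1$, whose induced pair $4<_R 2$ forces $g(4)<g(2)$; hence the constant function $g\equiv 1$ lies in $\A^1(P)$ with $g(2)=g(3)$ but not in $\A^1(R)$, so $\{\,g\in\A^1(P): g(i)=g(i+1)\,\}\not\subseteq \A^1(Q)\cap\A^1(R)$. Your computation of the intersection is valid only under the weaker reading in which $\A^1(Q)$ and $\A^1(R)$ impose the conditions of $P$ plus the one new pair; you should either adopt that reading explicitly or observe that the paper applies the statement only to successive posets, where every incomparable pair has $j>i+1$, the intersection is empty, and the problematic case never arises.
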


By induction on the number of incomparable pairs in a poset $P$, Observation \ref{obs:refine} implies that if all adjacent integers are comparable in $P$, the set of all relaxed $P$-partitions \emph{is} the disjoint union of the $P$-partitions for its chains. We say that a poset $P$ is \emph{successive} if $i$ is comparable to $i+1$ for each $i < n$. For example, the antichain is not successive for $n\geq 2$ while every chain (permutation) is successive. The poset $1 <_P 3 >_P 2$ (i.e., $\varepsilon Z_3$) is not successive, but $1 <_P 2 >_P 3$ (i.e., $Z_3$) is successive.

We collect this discussion in the following lemma, which is the relaxed version of Lemma~\ref{thm:ftpp}.

\begin{lemma}[Fundamental lemma of relaxed $P$-partitions]\label{thm:ftpp2}
Suppose $P$ is successive. Then the set of relaxed $P$-partitions is the disjoint union of the relaxed $\pi$-partitions of its linear extensions $\pi$:
\[
 \A^1(P) = \bigcup_{\pi \in \LL(P)} \A^1(\pi).
\]
\end{lemma}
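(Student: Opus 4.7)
The plan is to prove this by induction on the number of incomparable pairs in $P$, using Observation \ref{obs:refine} as the key step. The successiveness hypothesis is precisely what is needed to rule out the bad case in that observation where the intersection $\A^1(Q) \cap \A^1(R)$ is nonempty.

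For the base case, if $P$ has no incomparable pairs, then $P$ is a chain, so $P = \pi$ for some permutation $\pi \in S_n$, and $\LL(P) = \{\pi\}$; the claim is then the tautology $\A^1(\pi) = \A^1(\pi)$. For the inductive step, I would assume the lemma holds for all successive posets with fewer incomparable pairs than $P$, and then pick any incomparable pair $i < j$ in $P$. The crucial observation is that, because $P$ is successive, $i$ and $i+1$ are comparable in $P$, so incomparability of $i$ and $j$ forces $j \neq i+1$; that is, $j \geq i+2$.

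Next I would form the two refinements $Q = P \cup \{i <_Q j\}$ and $R = P \cup \{j <_R i\}$, and note that both remain successive, since any pair of consecutive integers $\{k, k+1\}$ was already comparable in $P$ and hence in $Q$ and $R$. Each of $Q$ and $R$ has strictly fewer incomparable pairs than $P$, so the inductive hypothesis gives
\[
 \A^1(Q) = \bigsqcup_{\pi \in \LL(Q)} \A^1(\pi) \quad \text{and} \quad \A^1(R) = \bigsqcup_{\pi \in \LL(R)} \A^1(\pi).
\]
Since every linear extension of $P$ must place $i$ and $j$ in exactly one order, $\LL(P) = \LL(Q) \sqcup \LL(R)$ as a disjoint union.

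Finally, applying Observation \ref{obs:refine} with $j \neq i+1$, we get $\A^1(P) = \A^1(Q) \cup \A^1(R)$ with $\A^1(Q) \cap \A^1(R) = \emptyset$, so the union is disjoint. Substituting the inductive decompositions yields
\[
 \A^1(P) = \bigsqcup_{\pi \in \LL(Q)} \A^1(\pi) \; \sqcup \; \bigsqcup_{\pi \in \LL(R)} \A^1(\pi) = \bigsqcup_{\pi \in \LL(P)} \A^1(\pi),
\]
which completes the induction. The only subtlety worth flagging is the use of successiveness: without it, one could be forced to pick an incomparable pair with $j = i+1$, and then the two refined sets of relaxed $P$-partitions would overlap on the locus $\{g(i) = g(i+1)\}$, destroying disjointness. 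Everything else in the argument is a standard ``refine one incomparable pair at a time'' induction modeled on the proof of Lemma \ref{thm:ftpp}.
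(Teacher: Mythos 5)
Your proposal is correct and is essentially the paper's own argument: the paper proves this lemma by exactly the induction on the number of incomparable pairs via Observation \ref{obs:refine}, with successiveness forcing $j \neq i+1$ so that the two refinements $Q$ and $R$ give a disjoint decomposition. The paper only sketches this ("By induction on the number of incomparable pairs\dots"), and your write-up supplies the details (base case, preservation of successiveness under refinement, $\LL(P) = \LL(Q) \sqcup \LL(R)$) correctly.
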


By analogy with the ordinary case, let $\A^1(P;m)$ be the set of $P$-partitions bounded by $m$, i.e.,
\[
 \A^1(P;m) = \{\, g \in \A^1(P) : 1 \leq g(i) \leq m \text{ for all }i \,\},
\]
and define the \emph{relaxed order polynomial} of $P$ to be $\op^1_P(m) = \left|\A^1(P;m) \right|$.
An immediate corollary of Lemma \ref{thm:ftpp2} is that relaxed order polynomials of successive posets are sums of relaxed order polynomials of their linear extensions.

\begin{cor}\label{cor:oplinearex2}
The relaxed order polynomial of a successive poset $P$ is the sum of the relaxed order polynomials of its linear extensions:
\[
 \op^1_P(m) = \sum_{\pi \in \LL(P)} \op^1_{\pi}(m).
\]
\end{cor}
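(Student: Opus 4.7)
The plan is to deduce this directly from Lemma \ref{thm:ftpp2}. Since $P$ is assumed successive, the Fundamental Lemma of relaxed $P$-partitions expresses $\A^1(P)$ as the disjoint union
\[
 \A^1(P) = \bigcup_{\pi \in \LL(P)} \A^1(\pi).
\]
The boundedness condition defining $\A^1(P;m)$---namely $1 \leq g(i) \leq m$ for all $i$---is a constraint on the values $g(1),\ldots,g(n)$ alone and is independent of the poset structure, so intersecting each side above with $\{g : 1 \leq g(i) \leq m \text{ for all }i\}$ preserves the disjointness and yields
\[
 \A^1(P;m) = \bigcup_{\pi \in \LL(P)} \A^1(\pi;m),
\]
again as a disjoint union. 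Taking cardinalities then gives $\op^1_P(m) = \sum_{\pi \in \LL(P)} \op^1_{\pi}(m)$, as desired.

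Since this is essentially a one-line consequence of the preceding lemma---which is precisely why the statement is presented as a corollary---there is no real obstacle in the proof itself. All of the substantive content lives in Lemma \ref{thm:ftpp2}, whose proof would proceed by induction on the number of incomparable pairs of $P$ via Observation \ref{obs:refine}. The successivity hypothesis is precisely what ensures that at every step of the induction, the intersection $\A^1(Q) \cap \A^1(R)$ in Observation \ref{obs:refine} is empty: any incomparable pair $i < j$ in a successive poset must satisfy $j > i+1$ (since $i$ and $i+1$ are already comparable in $P$), and this property is inherited by the refinements $Q$ and $R$ throughout the induction.
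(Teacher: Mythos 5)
Your proof is correct and matches the paper's treatment: the paper also presents this as an immediate consequence of Lemma \ref{thm:ftpp2}, restricting the disjoint union to bounded maps and taking cardinalities. Your extra remark correctly identifies why successivity makes the union disjoint, noting that refinements of a successive poset remain successive, which is indeed the point of the inductive argument behind the lemma.
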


We can now give relatively simple expressions for the relaxed order polynomials of chains. A function $g \in \A^1(\pi)$ if and only if 
\[
 g(\pi(1)) \leq g(\pi(2)) \leq \cdots \leq g(\pi(n)),
\]
with $g(\pi(i)) < g(\pi(i+1))$ whenever $\pi(i) > \pi(i+1)+1$, i.e., when $i$ is a big descent of $\pi$. Analogously to Equation \eqref{eq:piop} for the (ordinary) order polynomials of chains, we deduce
\[
\op^1_{\pi}(m) = \binom{m+n-1-\des_1(\pi)}{n},
\]
and with Theorem \ref{thm:ftpp2} we obtain the following result; compare with Equation \eqref{eq:ogf}.

\begin{prop}\label{prp:decomp}
For any successive poset $P$ of $[n]$, we have
\[
 \sum_{m\geq 1} \op^1_P(m) t^m = \frac{\sum_{\pi \in \LL(P)} t^{1+\des_1(\pi)}}{(1-t)^{n+1}}. 
\]
\end{prop}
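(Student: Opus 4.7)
The plan is to reduce to chains via Corollary \ref{cor:oplinearex2}, then apply a standard generating function identity to each chain summand, following the template of the $P$-partition argument that produces Equation \eqref{eq:ogf}.

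First, I would verify the count $\op^1_\pi(m) = \binom{m+n-1-\des_1(\pi)}{n}$ for a chain $\pi$. By the definition of a relaxed $\pi$-partition, $g \in \A^1(\pi;m)$ is equivalent to a sequence $1 \leq a_1 \leq a_2 \leq \cdots \leq a_n \leq m$ (where $a_i = g(\pi(i))$) with $a_i < a_{i+1}$ for each $i \in \Des_1(\pi)$. Substituting $b_i = a_i - |\Des_1(\pi) \cap [i-1]|$ converts these strict inequalities into weak ones, producing a weakly increasing sequence in $[1, m - \des_1(\pi)]$ of length $n$, which is counted by $\binom{m+n-1-\des_1(\pi)}{n}$. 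This is exactly the chain computation displayed just before the proposition statement.

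Next, I would convert this into a generating function by the well-known identity
\[
\sum_{m \geq 0} \binom{m+n-1-k}{n} t^m = \frac{t^{1+k}}{(1-t)^{n+1}}
\]
(valid for $0 \leq k \leq n-1$), which holds because the nonzero terms start at $m = 1+k$ and $\sum_{j \geq 0} \binom{j+n}{n} t^j = (1-t)^{-(n+1)}$. Applying this with $k = \des_1(\pi)$ yields
\[
\sum_{m \geq 1} \op^1_\pi(m)\, t^m = \frac{t^{1+\des_1(\pi)}}{(1-t)^{n+1}}.
\]

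Finally, since $P$ is successive, Corollary \ref{cor:oplinearex2} gives $\op^1_P(m) = \sum_{\pi \in \LL(P)} \op^1_\pi(m)$. Summing the chain generating function identity above over all $\pi \in \LL(P)$ produces the claimed formula. I do not anticipate a genuine obstacle here: the only nontrivial ingredient is the hypothesis of successiveness, which is precisely what is needed to apply the relaxed fundamental lemma (Lemma \ref{thm:ftpp2}) and thereby Corollary \ref{cor:oplinearex2}. All other steps are formal manipulations mirroring the classical $P$-partition derivation.
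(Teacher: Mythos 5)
Your proposal is correct and follows essentially the same route as the paper: compute $\op^1_\pi(m)=\binom{m+n-1-\des_1(\pi)}{n}$ for chains, then apply the relaxed fundamental lemma (via Corollary \ref{cor:oplinearex2}) and sum the resulting geometric-series identities over $\LL(P)$. You simply spell out the shift bijection and the binomial generating-function identity that the paper leaves implicit as ``analogous to Equation \eqref{eq:piop}.''
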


Continuing our analogous notation, we can write $A^1_P(t) = \sum_{\pi \in \LL(P)} t^{1+\des_1(\pi)}$ for the numerator in this relaxed order polynomial generating function, which we call the \emph{relaxed $P$-Eulerian polynomial}.

\begin{rem}
We can generalize the definition of relaxed $P$-partitions to ``$r$-relaxed'' $P$-partitions for any $r\geq 0$. There is a fundamental lemma for ``$r$-successive'' posets, and other analogous statements. The set of $r$-relaxed $P$-partitions will be denoted $\A^r(P)$ and their order polynomials $\op^r_P(m)$, hence the notations $\A^1(P)$ and $\op^1_P(m)$ for the relaxed ($r=1$) case. We have further comments about $r$-relaxed $P$-partitions and generalizations of zig-zag posets in Section \ref{sec:morerelaxed}.
\end{rem}

\subsection{Proof of Theorem \ref{thm:ZU}}\label{sec:proof1}

Now we consider relaxed $P$-partitions for the zig-zag poset $Z_n$. Let $Z_n^1(t) = A^1_{Z_n}(t)$ denote the relaxed $P$-Eulerian polynomial and $\op_n^1(m)=\op_{Z_n}^1(m)$ the relaxed order polynomial. This poset is quite clearly successive, and therefore its relaxed $P$-partition generating function is of the form
\[
 \frac{Z_n^1(t)}{(1-t)^{n+1}} = \sum_{m\geq 0} \op_n^1(m) t^m.
\]
However, by taking $\sigma$ to be the identity in Observation \ref{obs:ZU}, we see that $\LL(Z_n)$ is the set of inverse alternating permutations, i.e., $u \in U_n$ if and only if $u^{-1}=w \in \LL(Z_n)$. Recall that $\ret_1(u) = \des_1(w)$. Thus it follows easily that
\begin{equation}\label{eq:z'u}
 Z_n^1(t) = \sum_{w \in \LL(Z_n)} t^{1+\des_1(w)} = \sum_{u \in U_n} t^{1+\ret_1(u)}=tU_n(t).
\end{equation}

It remains to show that $Z_n^1(t) = Z_n(t)$, which we will prove in Theorem \ref{thm:ZU}.

The action of the permutation $\varepsilon$ that takes $Z_n$ to the natural labeling $\varepsilon Z_n$ acts on functions by $\varepsilon f(i) = f(\varepsilon(i))$, and the following proposition is almost immediate from the definitions.

\begin{prop}\label{prp:f'f}
For any $n\geq 1$, we have 
\[
 \A^1(Z_n) = \{\, \varepsilon f : f \in \A(\varepsilon Z_n) \,\}.
\]
\end{prop}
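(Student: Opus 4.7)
The plan is to unravel the defining inequalities on each side and observe that they coincide after the index substitution by $\varepsilon$. First I would note that because $\varepsilon Z_n$ is naturally labeled, the defining condition of $\A(\varepsilon Z_n)$ imposes no strict inequalities: $f \in \A(\varepsilon Z_n)$ if and only if $f$ is order-preserving on $\varepsilon Z_n$. Reading off the Hasse diagram (see Figure \ref{fig:Zn}), this is exactly the system
\[
 f(1) \leq f(3) \geq f(2) \leq f(5) \geq f(4) \leq \cdots.
\]

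Next I would make the key observation that every cover of $Z_n$ is of the form $i <_{Z_n} i+1$ or $i+1 <_{Z_n} i$; in particular, every \emph{unnatural} cover is of the form $(i{+}1) <_{Z_n} i$, which has $i < (i{+}1)+1$. Thus the ``strict'' clause in the definition of a relaxed $P$-partition is never triggered for $P = Z_n$, and $\A^1(Z_n)$ is simply the set of order-preserving maps $Z_n \to \NN$. Reading off the Hasse diagram of $Z_n$, this is the system
\[
 g(1) \leq g(2) \geq g(3) \leq g(4) \geq g(5) \leq \cdots.
\]

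Finally, I would check that $f \mapsto \varepsilon f$ converts the first system into the second. Since $\varepsilon$ fixes $1$ (and, when $n$ is even, also $n$) while swapping $2k$ with $2k+1$ for each admissible $k$, the values of $\varepsilon f$ at $1,2,3,4,5,\ldots$ are $f(1), f(3), f(2), f(5), f(4),\ldots$, so the inequalities match term by term. As $\varepsilon$ is an involution on $[n]$, the map $f \mapsto \varepsilon f$ is a bijection between $\A(\varepsilon Z_n)$ and $\A^1(Z_n)$, giving the claimed equality of sets. There is no real obstacle here; the only care needed is to observe that the unnatural covers of $Z_n$ always involve consecutive integers, which is precisely why the relaxation turns $\A^1(Z_n)$ into a set indexed by the same inequalities as $\A(\varepsilon Z_n)$.
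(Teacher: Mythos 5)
Your proof is correct and follows essentially the same route as the paper's: write out the defining inequalities of $\A^1(Z_n)$ and of $\A(\varepsilon Z_n)$ (both systems consist only of weak inequalities, since $\varepsilon Z_n$ is naturally labeled and every unnatural relation of $Z_n$ involves consecutive integers), and observe that precomposition with the involution $\varepsilon$ carries one system to the other. Your version is slightly more explicit than the paper's in justifying why the strict clause of the relaxed definition is never triggered for $Z_n$, but there is no substantive difference.
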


\begin{proof}
Every relaxed $P$-partition $g \in \A^1(Z_n)$ has the property that
\begin{equation}\label{eq:g}
 g(1) \leq g(2) \geq g(3) \leq \cdots \geq g(2i-1) \leq g(2i) \geq g(2i+1) \leq \cdots,
\end{equation}
whereas every ordinary $P$-partition $f \in \A(\varepsilon Z_n)$ satisfies
\begin{equation}\label{eq:f}
 f(1) \leq f(3) \geq f(2) \leq \cdots \geq f(2i-2) \leq f(2i+1) \geq f(2i) \leq \cdots.
\end{equation}
From the definition of $\varepsilon$ we can see that if $g$ satisfies \eqref{eq:g}, then $\varepsilon g$ satisfies \eqref{eq:f}. Conversely, if $f$ satisfies \eqref{eq:f}, then $\varepsilon f$ satisfies \eqref{eq:g}.
\end{proof}

\begin{cor}\label{cor:or}
For any $n\geq 1$ and $m\geq 1$, we have
\begin{equation}
 \op_n^1(m) = \op_n(m) \label{e-opud}
\end{equation}
and hence
\[
 tU_n(t)=Z_n^1(t)= Z_n(t).
\]
\end{cor}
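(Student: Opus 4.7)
The plan is to leverage Proposition \ref{prp:f'f} directly, since it has essentially done the heavy lifting by setting up a bijection between the ordinary $P$-partitions of $\varepsilon Z_n$ and the relaxed $P$-partitions of $Z_n$. The map $f \mapsto \varepsilon f$ given by $(\varepsilon f)(i) = f(\varepsilon(i))$ is a bijection from $\A(\varepsilon Z_n)$ to $\A^1(Z_n)$, and crucially this bijection just permutes the multiset of values taken by $f$. Therefore it restricts to a bijection $\A(\varepsilon Z_n; m) \to \A^1(Z_n; m)$ for every $m \geq 1$, which immediately yields \eqref{e-opud}.

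From \eqref{e-opud}, the two rational generating function identities
\[
\frac{Z_n(t)}{(1-t)^{n+1}} = \sum_{m \geq 0} \op_n(m)\, t^m \qquad \text{and} \qquad \frac{Z_n^1(t)}{(1-t)^{n+1}} = \sum_{m \geq 0} \op_n^1(m)\, t^m,
\]
coming from Equation \eqref{eq:ogf} (applied to $\varepsilon Z_n$) and Proposition \ref{prp:decomp} (applied to $Z_n$, which is successive) respectively, force the numerators to agree: $Z_n(t) = Z_n^1(t)$. Finally, the identification $Z_n^1(t) = tU_n(t)$ is already recorded in Equation \eqref{eq:z'u}, so chaining these equalities gives $tU_n(t) = Z_n^1(t) = Z_n(t)$, which is Theorem \ref{thm:ZU}.

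There is no real obstacle here; the entire content of the corollary has been pre-packaged by Proposition \ref{prp:f'f} and the parallel generating function machinery for $\A$ and $\A^1$. The only small point to verify carefully is that the bijection $f \mapsto \varepsilon f$ preserves the property of being bounded by $m$, which is transparent since $\varepsilon$ acts by relabeling inputs and does not alter the image of the function. Thus the proof is little more than an assembly of the pieces, and will likely be just a few lines.
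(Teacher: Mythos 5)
Your proof is correct and follows essentially the same route the paper takes: the paper's proof is the one-line instruction to combine Proposition \ref{prp:f'f}, Proposition \ref{prp:decomp}, and Equation \eqref{eq:z'u}, and you have simply unpacked that combination, including the (correct) observation that $f \mapsto f \circ \varepsilon$ restricts to a bijection on bounded partitions because $\varepsilon$ permutes inputs without changing the multiset of values.
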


\begin{proof}
Combine Proposition \ref{prp:decomp} with Proposition \ref{prp:f'f} and Equation \eqref{eq:z'u}.
\end{proof}

This finishes one proof of Theorem \ref{thm:ZU}, but we can also give a bijective proof of Theorem \ref{thm:ZU} with the following.

\begin{prop}\label{prp:bij}
The map $\LL(\varepsilon Z_n) \to U_n$ given by $\pi \mapsto u = \pi^{-1}\varepsilon$ is a bijection. Moreover, for $\pi \in \LL(\varepsilon Z_n)$, we have
\[
 \Des(\pi) = \Ret_1(u).
\]
\end{prop}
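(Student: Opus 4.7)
The plan is to handle the bijection and the descent-set equality separately. For the bijection, taking $\sigma=\varepsilon$ in Observation~\ref{obs:ZU} gives $\pi\in\LL(\varepsilon Z_n) \iff \pi^{-1}\varepsilon \in U_n$, so $\pi\mapsto\pi^{-1}\varepsilon$ lands in $U_n$ and is clearly injective; since $\varepsilon$ is an involution, $u\mapsto \varepsilon u^{-1}$ is a two-sided inverse.

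For the descent-set equality, I would use $u^{-1}=\varepsilon\pi$ to rewrite
\[
 i \in \Ret_1(u) = \Des_1(u^{-1}) \iff \varepsilon(\pi(i)) > \varepsilon(\pi(i+1)) + 1.
\]
Writing $a=\pi(i)$ and $b=\pi(i+1)$, the task reduces to proving $a > b \iff \varepsilon(a) > \varepsilon(b) + 1$ under the linear-extension constraint $b \not<_{\varepsilon Z_n} a$. Since $|\varepsilon(x) - x| \leq 1$ for every $x \in [n]$, the ``$\Leftarrow$'' direction is unconditional: $a \leq b$ forces $\varepsilon(a) - \varepsilon(b) \leq (a+1) - (b-1) \leq 1$. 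The ``$\Rightarrow$'' direction is where the linear-extension hypothesis enters.

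For the hard direction I would enumerate the pairs $(a,b)$ with $a > b$ and $\varepsilon(a) \leq \varepsilon(b) + 1$, showing that each such ``bad'' pair is a cover relation $b \lessdot a$ in $\varepsilon Z_n$, which is forbidden by $\pi \in \LL(\varepsilon Z_n)$. Breaking on the parities of $a$ and $b$ and on whether either equals $1$ or $n$ (using $\varepsilon(1) = 1$, $\varepsilon(2j) = 2j+1$ and $\varepsilon(2j+1) = 2j$ in the relevant range, and $\varepsilon(n) = n$ when $n$ is even), the bad pairs turn out to be exactly $(3,1)$, $(2j+1, 2j)$ for $j \geq 1$, $(2j+3, 2j)$ for $j \geq 1$, and---when $n$ is even---$(n, n-2)$. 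Comparing with the Hasse diagram of $\varepsilon Z_n$ in Figure~\ref{fig:Zn}, each is indeed a cover $b \lessdot a$, completing the proof. The main obstacle is this finite case analysis; it is routine but must be carried out carefully because of the boundary behavior of $\varepsilon$ at $1$ and at $n$.
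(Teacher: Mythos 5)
Your proof is correct, and at the strategic level it mirrors the paper's: both reduce the claim to comparing $\Des(\pi)$ with $\Des_1(\varepsilon\pi)$, prove the inclusion $\Ret_1(u)\subseteq\Des(\pi)$ unconditionally (your $|\varepsilon(x)-x|\leq 1$ bound is the paper's Observation~\ref{obs:BDes} in different clothing), and use the linear-extension hypothesis only for the reverse inclusion. Where you genuinely differ is in how that hard inclusion is organized. The paper splits $j\in\Des(\pi)$ into small versus big descents and argues the two cases separately, invoking structural facts about linear extensions of \emph{both} labelings ($\varepsilon Z_n$ for the small-descent case, $Z_n$ for the big-descent case). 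You instead enumerate once and for all the adjacent pairs $(a,b)$ with $a>b$ and $\varepsilon(a)\leq\varepsilon(b)+1$ and observe that they are exactly the cover relations of $\varepsilon Z_n$, hence forbidden in the order $a$-then-$b$; your list is complete and correct (the covers of $\varepsilon Z_n$ are indeed $1\lessdot 3$, $2j\lessdot 2j+1$, $2j\lessdot 2j+3$, and $n-2\lessdot n$ for even $n$). Your packaging makes the role of the poset more transparent and is self-contained, at the cost of the boundary analysis at $1$ and $n$; the paper avoids most of that bookkeeping by reasoning with the blocks $\{2i,2i+1\}$ wholesale. One microscopic slip: ``$a\leq b$ forces $\varepsilon(a)-\varepsilon(b)\leq(a+1)-(b-1)\leq 1$'' needs $a<b$ strictly (otherwise the bound is $2$), which is harmless since $a=\pi(i)$ and $b=\pi(i+1)$ are distinct.
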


Since $\varepsilon$ is the permutation that labels $\varepsilon Z_n$ as we draw the elements from left to right, we have $\pi \in \LL(\varepsilon Z_n)$ if and only if 
\[
\pi^{-1}(\varepsilon(1)) < \pi^{-1}(\varepsilon(2)) > \pi^{-1}(\varepsilon(3)) < \cdots .
\]
Thus $\pi \in \LL(\varepsilon Z_n)$ if and only if $u = \pi^{-1}\varepsilon \in U_n$. By the definition of big descents and big returns, it suffices to show $\Des(\pi) = \Des_1(\pi')$, where $\pi' = u^{-1} = \varepsilon \pi \in \LL(Z_n)$.

Here is a key observation.

\begin{obs}\label{obs:BDes}
For any permutation $\sigma$, we have $\Des_1(\sigma) \subseteq \Des(\varepsilon \sigma)$. 
\end{obs}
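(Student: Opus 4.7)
The observation says that the involution $\varepsilon$ can only destroy ``small'' descents (gaps of exactly one), so every descent with a gap of at least two must survive composition with $\varepsilon$. The strategy is to use the explicit description of $\varepsilon$ as a product of disjoint transpositions of adjacent integers: from $\varepsilon = s_2 s_4 \cdots s_{2k}$, the permutation $\varepsilon$ is the identity on $1$ (and on $n$ when $n$ is even) and swaps $2i$ with $2i+1$ whenever $2i+1 \leq n$. In particular, for every $a \in [n]$ we have the ``nearest-neighbor'' bound $\varepsilon(a) \in \{a-1,\,a,\,a+1\}$.

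\textbf{Main steps.} First, fix $i \in \Des_1(\sigma)$ and set $a=\sigma(i)$, $b=\sigma(i+1)$, so that $a - b \geq 2$. Next, apply the nearest-neighbor bound to get $\varepsilon(a) \geq a-1$ and $\varepsilon(b) \leq b+1$, whence
\[
\varepsilon(a) - \varepsilon(b) \;\geq\; (a-1) - (b+1) \;=\; a-b-2 \;\geq\; 0.
\]
Then observe that $\varepsilon$ is a bijection and $a \neq b$, so $\varepsilon(a) \neq \varepsilon(b)$; combined with the previous inequality, this forces $\varepsilon(a) > \varepsilon(b)$. Finally, since $(\varepsilon\sigma)(i) = \varepsilon(\sigma(i)) = \varepsilon(a)$ and $(\varepsilon\sigma)(i+1) = \varepsilon(b)$, we conclude $i \in \Des(\varepsilon\sigma)$, as desired.

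\textbf{Main obstacle.} There really is no obstacle: the whole content lies in recognizing that $\varepsilon$ shifts values by at most one. The only minor subtlety is upgrading the weak inequality $\varepsilon(a) \geq \varepsilon(b)$ to a strict one, which is handled for free by injectivity of $\varepsilon$. One could alternatively give a case analysis according to the parities of $\sigma(i)$ and $\sigma(i+1)$, but this adds bookkeeping without adding insight, so the uniform ``$|\varepsilon(a)-a|\leq 1$'' argument is preferable.
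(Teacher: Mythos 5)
Your proof is correct. The core observation—that $\varepsilon$ only perturbs values by at most one, so a gap of at least two cannot be flipped—is exactly the right idea, and your execution via the uniform bound $|\varepsilon(a)-a|\leq 1$ together with injectivity is clean and complete. The paper's proof makes the same point but frames it in terms of the block structure of $\varepsilon$: it notes that if $\sigma(j) \in \{2i, 2i+1\}$ and $j$ is a big descent then $\sigma(j+1)\leq 2i-1$, and since $\varepsilon$ permutes $\{2i,2i+1\}$ among themselves and likewise preserves $\{1,\ldots,2i-1\}$, the strict inequality across the block boundary survives. Your version is slightly more general (it would apply verbatim to any involution that is a product of disjoint adjacent transpositions) and avoids the case split on the value of $\sigma(j)$ by trading it for the small step of upgrading $\geq$ to $>$ via injectivity; the paper's version is more tightly wedded to the specific block decomposition of $\varepsilon$ but gets the strict inequality for free. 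Either is fine; yours reads a bit more cleanly.
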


\begin{proof}
Suppose $j \in \Des_1(\sigma)$ is a big descent, with $\sigma(j) \in \{2i,2i+1\}$. Then $2i-1 \geq \sigma(j+1)$. Thus if $\sigma'=\varepsilon \sigma$, we have $\sigma'(j) \in \{2i,2i+1\}$ and still $2i-1 \geq \sigma'(j+1)$. Therefore $j \in \Des(\sigma')$. 
\end{proof}

We remark that $\Des_1(\sigma)$ need not equal $\Des_1(\varepsilon \sigma)$. If $\sigma(j) =2i+1$ and $\sigma(j+1)=2i-2$, then $\sigma'(j) = 2i$ and $\sigma'(j+1) = 2i-1$. So in such a case, $j \in \Des_1(\sigma)$ but $j \notin \Des_1(\sigma')$. 

\begin{proof}[Proof of Proposition \ref{prp:bij}]
As $\pi = \varepsilon \pi'$, Observation \ref{obs:BDes} shows us that $\Des_1(\pi') \subseteq \Des(\pi)$. We now show the reverse inclusion. 

There are two cases for $j \in \Des(\pi)$; either $j\in \Des_0(\pi)-\Des_1(\pi)$ or $j\in \Des_1(\pi)$. 

First consider $j \in \Des_0(\pi)-\Des_1(\pi) = \{\, s : \pi(s) = \pi(s+1)+1 \,\}$. By construction of the poset $\varepsilon Z_n$, the only possible ``small descents'' of a linear extension $\pi$ are of the form $\pi(j)=2i > 2i-1 = \pi(j+1)$ for some $i$. But when we act by $\varepsilon$, this gives $\pi'(j) = 2i+1 > 2i-2 = \pi'(j+1)$, and therefore $j \in \Des_1(\pi')$.

Observation \ref{obs:BDes} shows us that $\Des_1(\pi) \subseteq \Des(\pi')$. We will next show that if $j \in \Des_1(\pi)$, then $\pi'(j)\neq \pi'(j+1)+1$, and hence $\Des_1(\pi) \subseteq \Des_1(\pi')$. By construction of the poset $Z_n$, we have $2i >_{Z_n} 2i-1$ for all $i$, so no even number may be followed by the number one less than it in any linear extension. Thus if $\pi' \in \LL(Z_n)$, then the ``small descents'' of $\pi'$ have an odd number followed by an even number. More precisely, $j \in \Des_0(\pi')-\Des_1(\pi')$ if there is an $i$ such that $2i+1 = \pi'(j) > \pi'(j+1) = 2i$. But then $2i=\pi(j) < \pi(j+1) = 2i+1$, and so $j \notin \Des(\pi)$. The contrapositive is that if $j \in \Des(\pi)$, then $j$ is a big descent of $\pi'$.
\end{proof}

\begin{rem}\label{rem:zbar}
The proof of Proposition \ref{prp:f'f} can be readily adapted to prove the analogous result 
\[
 \A^1(\bar{Z}_n) = \{\, o f : f \in \A(o \bar{Z}_n) \,\}
\]
for down-up zig-zag posets, which implies
\begin{equation}
 \op^1_{\bar Z_n}(m) = \op_{o \bar Z_n}(m) = \op_n(m), \label{e-opdu}
\end{equation}
where the second equality is Equation \eqref{e-opuddu}. In Section \ref{s-F}, we will use \eqref{e-opdu} in proving a recurrence formula for the order polynomials $\op_n(m)$.
\end{rem}

\subsection{The fixed $m$ generating function} \label{s-F}

Before closing Section \ref{s-altrel}, we discuss the generating functions
\[
 \quad F_m(y) = \sum_{n\geq 0} \op_n(m) y^n,
\]
with $m$ fixed. This is the ordinary generating function for the $m$th column of Table \ref{tab:opnk}. Recall from Corollary \ref{cor:or} that $\op_n(m) = \op^1_n(m)$, so all statements in this section apply whether we interpret the coefficients to be counting $P$-partitions of $\varepsilon Z_n$ or relaxed $P$-partitions of $Z_n$. We will use the latter interpretation from now on, unless otherwise indicated.

Since $\op_n(1) = 1$ for all $n\geq 0$, we have $F_1(y) = 1/(1-y)$. For larger values of $m$, the function $F_m(y)$ can be computed from the following recurrences.

\begin{thm}\label{thm:Frec}
For all $m\geq 2$, we have
\begin{equation}\label{eq:negFrec}
 F_m(y) = \frac{1}{F_{m-1}(-y)-y}
\end{equation}
and
\begin{equation}\label{eq:posFrec}
 F_m(y) = \frac{y+2F_{m-1}(y)}{2-y^2-yF_{m-1}(y)}.
\end{equation}
\end{thm}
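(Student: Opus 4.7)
The plan is to derive two auxiliary identities by combinatorial decomposition of up-down zig-zag sequences and then combine them algebraically to obtain both recurrences. Throughout I use the relaxed $P$-partition interpretation, so that $\op_n(m)$ counts sequences $g \colon [n] \to [m]$ satisfying $g(1) \leq g(2) \geq g(3) \leq g(4) \geq \cdots$.

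First, I would decompose such sequences according to the first position (if any) where $g$ takes the value $1$. A short parity analysis shows that this first occurrence, when not at position $1$, must be at an odd position $i \geq 3$: at an even interior position $i$ the constraint $g(i-1) \leq g(i) = 1$ would force $g(i-1) = 1$, contradicting minimality. Splitting at this position produces a prefix with values in $[2,m]$ (an up-down zig-zag of even length $i - 1$, counted by $\op_{i-1}(m-1)$) and a suffix starting at even position $i+1$ (a down-up zig-zag in $[m]$ of length $n - i$, counted by $\op_{n-i}(m)$ via the symmetry $g \mapsto m+1-g$). Passing to generating functions, I expect to obtain
\[
 F_m(y) \bigl[2 - y\bigl(F_{m-1}(y) + F_{m-1}(-y)\bigr)\bigr] = 2 F_{m-1}(y), \qquad \text{(I)}
\]
the combination $F_{m-1}(y) + F_{m-1}(-y)$ appearing because the prefix is constrained to have even length.

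Next, I would perform the analogous decomposition according to the first position where $g = m$. A parallel parity analysis shows that if $g(1) < m$ then the first $m$ must occur at an even position $i$, making the prefix an up-down zig-zag of odd length in $[m-1]$ and the suffix an up-down zig-zag in $[m]$ of length $n-i$. An exceptional case arises when $g(1) = m$, which forces $g(2) = m$ and leaves an arbitrary up-down zig-zag of length $n - 2$ on the remaining positions, contributing a factor of $y^2 F_m(y)$. This decomposition should yield
\[
 F_m(y) \bigl[2 - 2y^2 - y F_{m-1}(y) + y F_{m-1}(-y)\bigr] = 2 F_{m-1}(y) + 2y. \qquad \text{(II)}
\]

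Finally, subtracting (I) from (II) gives $2y F_m(y) \bigl(F_{m-1}(-y) - y\bigr) = 2y$, which after cancellation rearranges into \eqref{eq:negFrec}. Adding (I) and (II) and dividing by $2$ yields $F_m(y) \bigl[2 - y^2 - y F_{m-1}(y)\bigr] = 2 F_{m-1}(y) + y$, which is \eqref{eq:posFrec}. The main technical hurdle I anticipate is the careful parity bookkeeping at each boundary of the decomposition---whether each piece has odd or even length, whether it starts afresh as up-down or down-up, and whether its values live in $[m]$ or $[m-1]$---since this is what dictates how the odd and even parts of $F_{m-1}$ combine in (I) and (II).
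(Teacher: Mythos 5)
Your proposal is correct and is essentially the paper's proof: your two decompositions by the first occurrence of an extreme value reproduce exactly the recurrences \eqref{oprec1} and \eqref{oprec2} of Proposition \ref{prp:omrec} (your ``first occurrence of the value $1$'' in the up-down sequence is the paper's ``first occurrence of $m$'' in the down-up sequence, via the complement $g \mapsto m+1-g$), and your identities (I) and (II) are precisely the generating-function forms of those recurrences, combined by the same add/subtract step. The only cosmetic difference is that the paper adds and subtracts at the level of the coefficient recurrences (obtaining \eqref{eq:double} and \eqref{eq:alt}) before passing to generating functions, whereas you pass to generating functions first, encoding the parity restriction via $\bigl(F_{m-1}(y) \pm F_{m-1}(-y)\bigr)/2$.
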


For example, we find
\[
F_2(y) = \frac{1+y}{1-y-y^2} = 1+2y+3y^2 + 5y^3 + 8y^4 + 13y^5+\cdots,
\]
so that $\op_n(2)$ is the sequence of Fibonacci numbers, starting with $1,2,3,5,\ldots$.

A recurrence equivalent to Equation \eqref{eq:negFrec} was established by B\'ona and Ju \cite{BonaJu2006}, though \eqref{eq:posFrec} appears to be new. From \eqref{eq:negFrec}, one can also deduce a recurrence equivalent to one observed by Stanley \cite[p. 31]{Stanley1973}:
\[
F_m(y) = \frac{1}{-y+F_{m-1}(-y)} = \frac{1}{-y+ \frac{1}{F_{m-2}(y)+y}} = \frac{y+F_{m-2}(y)}{1-y^2-yF_{m-2}(y)}.
\]
Below is one of many properties about the functions $F_m(y)$ that can be deduced from these recurrences. 

\begin{cor}
The functions $F_m(y)$ are rational of the form $P_m(y)/Q_m(y)$, where $m=\deg Q_m = 1+\deg P_m$. In particular, $P_1=1$, $Q_1=1-y$, $P_2 = 1+y$, $Q_2 =1-y-y^2$, and for all $m\geq 1$ we have
\[
\left[\begin{array}{cc} 1 & y \\ -y & 1-y^2 \end{array} \right] \left[ \begin{array}{c} P_m(y) \\ Q_m(y) \end{array} \right] = \left[ \begin{array}{c} P_{m+2}(y) \\ Q_{m+2}(y) \end{array} \right].
\]
\end{cor}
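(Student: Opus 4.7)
The plan is to prove this by induction on $m$ in steps of two, using the recurrence
\[
 F_m(y) = \frac{y + F_{m-2}(y)}{1 - y^2 - y F_{m-2}(y)}
\]
that was derived in the paragraph just before the corollary (by substituting Equation \eqref{eq:negFrec} into itself). The base cases $m=1$ and $m=2$ are supplied by $F_1(y) = 1/(1-y)$ and $F_2(y) = (1+y)/(1-y-y^2)$, both of which match the claimed forms of $P_m, Q_m$ and degree conditions $\deg P_m = m-1$ and $\deg Q_m = m$.

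For the inductive step, I would assume $F_m(y) = P_m(y)/Q_m(y)$ with $\deg P_m = m - 1$ and $\deg Q_m = m$, define $P_{m+2}$ and $Q_{m+2}$ via the matrix equation, i.e., $P_{m+2} = P_m + y Q_m$ and $Q_{m+2} = -y P_m + (1-y^2) Q_m$, and then substitute $F_m = P_m/Q_m$ into the recurrence to obtain
\[
 F_{m+2}(y) = \frac{y + P_m/Q_m}{1 - y^2 - y P_m/Q_m} = \frac{y Q_m + P_m}{(1-y^2) Q_m - y P_m} = \frac{P_{m+2}(y)}{Q_{m+2}(y)}.
\]

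For the degree claim, I would read it directly off the formulas: $P_{m+2} = P_m + y Q_m$ has degrees $m-1$ and $m+1$ in its two summands, giving $\deg P_{m+2} = m+1 = (m+2) - 1$, while $Q_{m+2} = -y P_m + (1-y^2) Q_m$ has degrees $m$ and $m+2$, giving $\deg Q_{m+2} = m+2$. The leading term of $Q_{m+2}$ comes solely from $-y^2 Q_m$, so its leading coefficient is nonzero, which is what makes the degree count work.

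There is essentially no obstacle here; the only minor point worth flagging is that the stepping in $m+2$ means odd and even $m$ must be treated as two interlocking inductions with separate base cases, which the statement already anticipates by listing both $m=1$ and $m=2$ explicitly. No claim of coprimality between $P_m$ and $Q_m$ is asserted, so we do not need to track that.
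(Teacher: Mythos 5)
Your proof is correct, and it follows precisely the route the paper intends: the paper derives the two-step recurrence $F_m(y) = (y+F_{m-2}(y))/(1-y^2-yF_{m-2}(y))$ in the sentence immediately preceding the corollary and then presents the corollary as an evident consequence, leaving the induction implicit. You have filled in that induction faithfully, and your care with the degree bookkeeping (noting that the leading term of $Q_{m+2}$ comes only from $-y^2 Q_m$, so no cancellation can drop the degree) and with the need for two interlocking base cases is exactly right.
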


The proof of Theorem \ref{thm:Frec} follows from recurrences for order polynomials. As boundary cases, we have defined $\op_0(m) = 1$ and it is easily seen that $\op_1(m)=m$, while $\op_n(1)=1$ for all $n$.

\begin{prop}\label{prp:omrec}
For any $n\geq 2$ and $m\geq 1$, we have
\begin{align}
 \op_n(m) &= \op_n(m-1) + \op_{n-2}(m) + \sum_{0\leq j \leq (n-2)/2} \op_{2j+1}(m-1)\op_{n-2-2j}(m) \label{oprec1}\\
  &= \op_n(m-1) + \sum_{0\leq j \leq (n-1)/2} \op_{2j}(m-1)\op_{n-1-2j}(m). \label{oprec2} 
\end{align}
\end{prop}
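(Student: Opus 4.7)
By Corollary \ref{cor:or}, the order polynomial $\op_n(m)$ counts sequences $(a_1,\dots,a_n)\in[m]^n$ satisfying
\[a_1\le a_2\ge a_3\le a_4\ge\cdots,\]
namely the relaxed $P$-partitions of $Z_n$ bounded by $m$. Two observations drive both recurrences. First, whenever the value $m$ appears in such a sequence, it must appear at some even position: if $a_{2k+1}=m$ for some $k\ge 1$, then $a_{2k+1}\le a_{2k}\le m$ forces $a_{2k}=m$, while $a_1=m$ combined with $a_1\le a_2\le m$ forces $a_2=m$. Symmetrically, whenever the value $1$ appears it appears at some odd position, by the dual forcing argument.

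For \eqref{oprec1}, I would partition the sequences into three disjoint types according to the location of $m$. Type (i), in which $m$ does not appear, contributes $\op_n(m-1)$ since all values lie in $[m-1]$. Type (ii), in which $a_1=m$ (forcing $a_2=m$), contributes $\op_{n-2}(m)$, because $a_3,\dots,a_n$ is an unrestricted up-down zig-zag of length $n-2$ bounded by $m$. Type (iii) has $a_1<m$ together with at least one occurrence of $m$; by the first observation there is a leftmost even position $2j+2$ holding $m$, for some $0\le j\le (n-2)/2$. By the minimality of $2j+2$ combined with the forcing argument (any $m$ at an odd position $\le 2j+1$ would propagate to an earlier even position), the entries $a_1,\dots,a_{2j+1}$ all lie in $[m-1]$ and form an up-down zig-zag of length $2j+1$, contributing $\op_{2j+1}(m-1)$, while $a_{2j+3},\dots,a_n$ ranges freely over $\op_{n-2-2j}(m)$ sequences. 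Summing these three contributions gives \eqref{oprec1}.

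For \eqref{oprec2}, I would dualize by conditioning on the first occurrence of $1$. If $1$ is absent, the shift $a_i\mapsto a_i-1$ gives $\op_n(m-1)$. Otherwise, the leftmost odd position $2j+1$ holding $1$ exists for some $0\le j\le (n-1)/2$, and the same minimality/forcing argument forces $a_1,\dots,a_{2j}$ to lie in $\{2,\dots,m\}$ and to form an up-down zig-zag of length $2j$, contributing $\op_{2j}(m-1)$. After $a_{2j+1}=1$, the suffix $a_{2j+2},\dots,a_n$ is a down-up zig-zag of length $n-2j-1$ bounded by $m$, which I would convert to $\op_{n-1-2j}(m)$ using the identity $\op_{\bar Z_n}^1(m)=\op_n(m)$ from Remark \ref{rem:zbar}. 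The delicate bookkeeping occurs in the boundary case $j=0$ of \eqref{oprec1}: requiring $a_1<m$ yields only $\op_1(m-1)=m-1$ choices for $a_1$, so type (iii) alone gives $(m-1)\op_{n-2}(m)$ when the leftmost peak equaling $m$ is $a_2$; adding type (ii)'s $\op_{n-2}(m)$ recovers the full $m\cdot\op_{n-2}(m)$ and explains the extra $\op_{n-2}(m)$ term in \eqref{oprec1} beyond the $j=0$ summand. Apart from this and the appeal to Remark \ref{rem:zbar}, no substantive obstacles arise; the proof is a case analysis organized by where the extremal values first appear.
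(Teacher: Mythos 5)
Your proof is correct and mirrors the paper's: for \eqref{oprec1} the decomposition by the leftmost position holding the value $m$ is identical to the paper's, and for \eqref{oprec2} your conditioning on the first occurrence of $1$ in the up-down sequence is the complementation $a\mapsto m+1-a$ of the paper's conditioning on the first occurrence of $m$ after reinterpreting $\op_n(m)$ as counting down-up sequences. Both arguments invoke the identity $\op^1_{\bar Z_k}(m)=\op_k(m)$ from Remark~\ref{rem:zbar} at the corresponding step (you for the suffix, the paper for the prefix), so the two proofs are essentially the same up to this reflection.
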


Equation \eqref{oprec2} has been known since at least \cite[Example 2.3]{BermanKohler1976}, and OEIS entry A050446 takes this recurrence as its definition for the array of numbers $\op_n(m)$. As far as we know, \eqref{oprec1} is new. 

\begin{proof}[Proof of Proposition \ref{prp:omrec}]
Consider the zig-zag poset $Z_n$ and let $\A_n(m) = \A^1(Z_n;m)$. We will partition the set $\A_n(m)$ by cases. If $f \in \A_n(m)$ has $f(i)<m$ for all $i$, then $f \in \A_n(m-1)$, which accounts for the first term on the right-hand side of both \eqref{oprec1} and \eqref{oprec2}. From now on, we suppose some values of $f$ are equal to $m$.

Let $i$ be the minimal index with $f(i)=m$. We know either $i=1$ or $i$ is even, since  $f(2j)\geq f(2j+1)$ for all $j \geq 1$. If $i=1$, then $f(1)=m$, and since $f(1)\leq f(2)\leq m$, we have $f(2)=m$ as well. Then the remaining $n-2$ elements can be identified with any relaxed $P$-partition of $Z_{n-2}$ bounded by $m$, and these are counted by the $\op_{n-2}(m)$ term in \eqref{oprec1}.

Now suppose $i>1$, so that $i$ is even. We write $i=2j+2$ for some $0\leq j \leq (n-2)/2$. There are $2j+1$ elements to the left of $i$, and $f$ cannot exceed $m-1$ for these elements, thus there are $\op_{2j+1}(m-1)$ ways to assign values to $f(1),f(2),\ldots,f(2j+1)$. Meanwhile there are $n-1-(2j+1)= n-2-2j$ elements to the right of $i$, and these values can achieve $m$, so there are $\op_{n-2-2j}(m)$ ways to assign values to $f(2j+3),\ldots,f(n)$. The choices to the left and the right of $i$ are independent of each other, so there are a total of $\op_{2j+1}(m-1)\op_{n-2-2j}(m)$ elements of $\A_n(m)$ with $f(i)=m$. Summing over all $j$ gives the final term of \eqref{oprec1}.

Equation \eqref{e-opdu} in Remark \ref{rem:zbar} shows $\op_n(m) = \op^1_{\bar Z_n}(m)$, and so for the identity in \eqref{oprec2}, we will work with relaxed $P$-partitions $f$ for the down-up zig-zag posets $\bar Z_n$. That is, $\op_n(m)$ counts relaxed $P$-partitions with $f(1)\geq f(2)\leq f(3) \geq \cdots$. Let $i$ denote the minimal index such that $f(i)=m$, as before. Now it must be the case that $i=2j+1$ is odd. A similar argument as before tells us there are $\op_{2j}(m-1)$ ways to assign values to $f(1),f(2),\ldots,f(2j)$, and $\op_{n-1-2j}(m)$ ways to assign values to $f(2j+2),\ldots,f(n)$. Thus there are a total of $\op_{2j}(m-1)\op_{n-1-2j}(m)$ elements of $\A_n(m)$ with $f(i)=m$. Summing over all $j$ gives the final term of \eqref{oprec2}.
\end{proof}

\begin{rem} 
Proposition \ref{prp:omrec}, along with the identity
\[
\frac{Z_n(t)}{(1-t)^{n+1}} = \sum_{m\geq 0} \Omega_n(m) t^m,
\]
gives another efficient method for computing the zig-zag Eulerian polynomials $Z_n(t)$. First, we use either of the recurrences in Proposition \ref{prp:omrec} to compute the terms $\Omega_n(1), \Omega_n(2), \dots , \Omega_n(n)$, which allows us to form the polynomial
\begin{equation}
\Big( \sum_{m=0}^{n} \Omega_n(m) t^m \Big)(1-t)^{n+1}. \label{e-poly}
\end{equation}
Since $Z_n(t)$ has degree $n-1$, we obtain $Z_n(t)$ by truncating \eqref{e-poly} to a polynomial of degree $n-1$.
\end{rem}

We can now give a proof of Theorem \ref{thm:Frec}.

\begin{proof}[Proof of Theorem \ref{thm:Frec}]
Recall that $F_m(y) = 1 + my + \sum_{n\geq 2} \op_n(m) y^n$.
Adding \eqref{oprec1} and \eqref{oprec2} gives us
\begin{equation}\label{eq:double}
 2\op_n(m) = 2\op_n(m-1) + \op_{n-2}(m) + \sum_{0\leq j \leq n-1} \op_j(m-1)\op_{n-1-j}(m),
\end{equation}
while subtracting them gives us 
\begin{equation}\label{eq:alt}
\op_{n-2}(m) = \sum_{0\leq j\leq n-1} (-1)^j\op_j(m-1)\op_{n-1-j}(m).
\end{equation}

Taking \eqref{eq:double}, multiplying by $y^n$ on both sides and summing over all $n\geq 2$ yields
 \[
  2(F_m(y)-my-1) = 2(F_{m-1}(y)-(m-1)y-1) + y^2F_m(y) +y(F_{m-1}(y)F_m(y)- 1),
 \]
which simplifies to
\[
 (2-y^2-yF_{m-1}(y))F_m(y) = y+2F_{m-1}(y),
\]
and dividing by $2-y^2-yF_{m-1}(y)$ yields \eqref{eq:posFrec}.

Taking \eqref{eq:alt}, multiplying by $y^{n-1}$, and summing over all $n\geq 2$ yields
\[
 yF_m(y) = F_{m-1}(-y)F_m(y)-1;
\]
solving for $F_m(y)$ yields \eqref{eq:negFrec}.
\end{proof}

\section{Refinements and recursive formulas}\label{sec:refinement}

The purpose of this section is to introduce a refined order polynomial as a means to find recursive techniques for computing the zig-zag Eulerian polynomials.

The proof of Proposition \ref{prp:f'f} gives a bijection between the set of ordinary $P$-partitions $\A(\varepsilon Z_n)$ and the set of relaxed $P$-partitions $\A^1(Z_n)$. So to make our notation cleaner, as in Section \ref{s-F}, we will abuse notation and write $\A_n(m) = \A^1(Z_n;m)$ to indicate the set of bounded relaxed $P$-partitions of $Z_n$, and $\op_n(m) = \left|\A^1(Z_n;m)\right|$ for the relaxed order polynomial of $Z_n$. Unless otherwise stated, when we refer to a ``$P$-partition'' we mean a relaxed $P$-partition.

We define a $(p,q)$-analogue of $\op_n(m)$ which keeps track of the value $f(1)$ of each $P$-partition $f\in \A_n(m)$. Let 
\[
\op_n(p,q;m) = \sum_{f \in \A_n(m)} p^{f(1)}q^{m+1-f(1)} = \sum_{j=1}^m \op_{n,j}(m) p^jq^{m+1-j},
\]
so that $\op_{n,j}(m)$ is the number of $P$-partitions $f\in \A_n(m)$ satisfying $f(1) = j$. In some sense, this is only a one-parameter refinement since $\op_n(p,q;m) = q^{m+1}\sum_{f \in \A_n(m)} (p/q)^{f(1)}$, but the use of both $p$ and $q$ homogenizes the refinement and allows for simpler expressions later on.

For convenience, we define $\op_0(p,q;m) = pq^m$. For $n=1$, we have 
\[
\op_1(p,q;m) = \sum_{j=1}^m p^{j}q^{m+1-j} = pq\left(\frac{p^m-q^m}{p-q}\right).
\]
As another small example, with $n=2$, we see that $\A_2(m) = \{ 1\leq f(1) \leq f(2) \leq m \}$. Therefore, with each choice of $f(1)=j$, the value of $f(2)$ can be any of the values in $\{j,j+1,\ldots,m\}$. Thus, we have
\[
 \op_2(p,q;m) = \sum_{f \in \A_2(m)} p^{f(1)}q^{m+1-f(1)} = \sum_{j=1}^m (m+1-j) p^jq^{m+1-j}.
\]

We define
\[
 G_n(p,q,x) = \sum_{m\geq 1} \op_n(p,q;m) x^m,
\]
with $G_0(p,q,x) = pqx/(1-qx)$. By construction we have $\op_n(1,1;m) = \op_n(m)$, and we define $G_n(x) = G_n(1,1,x) = Z_n(x)/(1-x)^{n+1}$.

There is a fairly straightforward pattern as we move from the $P$-partitions of $Z_n$ to those of $Z_{n+1}$, which allows us to prove the following recurrence.

\begin{prop}\label{prp:Oqrec}
For all $n\geq 0$ and $m\geq 1$, we have
\begin{equation}\label{eq:Oqrec}
\op_{n+1}(p,q;m) = \frac{p}{p-q}\left[ \op_{n}(q,p;m) - \op_{n}(q,q;m) \right].
\end{equation}
\end{prop}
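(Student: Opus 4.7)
The plan is to establish the coefficient-level identity $\op_{n+1,j}(m) = \sum_{k=1}^{m+1-j} \op_{n,k}(m)$ via a direct bijection and then convert to the generating-function identity by evaluating a geometric sum in $p$ and $q$.

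First, I would condition on the value $f(1) = j$ for each $f \in \A_{n+1}(m)$. Given such an $f$, the truncation $(f(2), f(3), \ldots, f(n+1))$ satisfies $f(2) \geq f(3) \leq f(4) \geq \cdots$ with $j \leq f(2) \leq m$; in other words, it is a relaxed $\bar{Z}_n$-partition bounded by $m$ with the extra constraint $f(2) \geq j$. Complementing via $g(i) := m+1-f(i+1)$ for $i \in \{1,\ldots,n\}$ (exactly as in the proof of Theorem \ref{thm:ZbarZ}) flips all of these inequalities and produces a relaxed $Z_n$-partition $g$ satisfying $1 \leq g(1) = m+1-f(2) \leq m+1-j$. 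This gives a bijection between $\{f \in \A_{n+1}(m) : f(1) = j\}$ and $\{g \in \A_n(m) : g(1) \leq m+1-j\}$, from which the claimed coefficient-level identity follows. The degenerate $n=0$ case can be verified directly against the convention $\op_0(p,q;m)=pq^m$.

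Next, I would multiply this identity by $p^j q^{m+1-j}$ and sum over $j \in \{1,\ldots,m\}$. Swapping the order of summation yields
\[
\op_{n+1}(p,q;m) = \sum_{j=1}^m \sum_{k=1}^{m+1-j} \op_{n,k}(m)\, p^j q^{m+1-j} = \sum_{k=1}^m \op_{n,k}(m) \sum_{j=1}^{m+1-k} p^j q^{m+1-j}.
\]
The inner geometric sum evaluates to $\frac{p}{p-q}(p^{m+1-k} q^k - q^{m+1})$, so
\[
\op_{n+1}(p,q;m) = \frac{p}{p-q}\sum_{k=1}^m \op_{n,k}(m)\bigl(p^{m+1-k} q^k - q^{m+1}\bigr),
\]
and recognizing the two halves of this sum as $\op_n(q,p;m)$ and $\op_n(q,q;m)$ respectively yields the desired formula.

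The main obstacle is verifying the bijection carefully: the complementation trick from Theorem \ref{thm:ZbarZ} does most of the work, but the extra constraint $f(1)=j$ in $\A_{n+1}(m)$ needs to be correctly translated into the constraint $g(1) \leq m+1-j$ on the $\A_n(m)$ side, and one should check the boundary cases $j=1$ and $j=m$ to ensure no off-by-one issues. Once the bijection is in hand, the remaining computation is routine geometric-sum reindexing.
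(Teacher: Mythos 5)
Your proposal is correct and follows essentially the same route as the paper: both arguments truncate $f\in\A_{n+1}(m)$ to $(f(2),\ldots,f(n+1))$, complement to get $g\in\A_n(m)$ with the constraint $f(1)+g(1)\leq m+1$, and then evaluate the resulting geometric sum $pq^m+p^2q^{m-1}+\cdots+p^{m+1-k}q^k=\frac{p}{p-q}\left(p^{m+1-k}q^k-q^{m+1}\right)$. The only cosmetic difference is that you fix $f(1)=j$ first and swap the order of summation, whereas the paper fixes $g(1)=j$ first; the double sum is identical.
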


Multiplying both sides of \eqref{eq:Oqrec} by $x^m$ and summing over all $m\geq 1$, we have
\begin{equation}\label{eq:Gqrec}
G_{n+1}(p,q,x) = \frac{p}{p-q}\left[ G_n(q,p,x) - G_n(q,q,x) \right],
\end{equation}
which proves Theorem \ref{thm:refined}(a).  

\begin{proof}
Equation \eqref{eq:Oqrec} is straightforward to verify when $n=0$. Now suppose $n>0$. 

Notice that if $f \in \A(Z_{n+1};m)$, then $(f(2),\ldots,f(n+1)) \in \A(\overline{Z}_n;m)$ and $g=(m+1-f(2),\ldots,m+1-f(n+1)) \in \A(Z_n;m)$. Therefore, each $g\in \A(Z_n;m)$ with $g(1) = m+1-f(2)=j$ can be extended to $f\in \A(Z_{n+1};m)$ by choosing $f(1) \in \{1,2,\ldots,m+1-j\}$. Thus,
\begin{align*}
\op_{n+1}(p,q;m) &= \sum_{j=1}^m (pq^m+p^2q^{m-1}+\cdots+p^{m+1-j}q^j)\op_{n,j}(m)\\
 &= \sum_{j=1}^m pq^j\left(\frac{p^{m+1-j}-q^{m+1-j}}{p-q}\right)\op_{n,j}(m)\\
 &=\frac{p}{p-q}\sum_{j=1}^m q^jp^{m+1-j}\op_{n,j}(m) - \frac{p}{p-q}\sum_{j=1}^m q^{m+1} \op_{n,j}(m),
\end{align*}
which establishes the identity.
\end{proof}

From Equation \eqref{eq:Gqrec} we can compute the first few values of $G_n(q,x)$ recursively:
\begin{align*}
G_1(p,q,x) &=\frac{pqx}{(1-px)(1-qx)}, \\
G_2(p,q,x) &=\frac{pqx}{(1-px)(1-qx)^2}, \\
G_3(p,q,x) &=\frac{pqx(1-pqx^2)}{(1-px)^2(1-qx)^3},\\
G_4(p,q,x) &=\frac{pqx(1+qx-4pqx^2+p^2qx^3+p^2q^2x^4)}{(1-px)^3(1-qx)^4},\\
G_5(p,q,x) &=\frac{\scriptstyle pqx(1+px+3qx-12pqx^2+q^2x^2+4p^2qx^3-4pq^2x^3-p^3qx^4+12p^2q^2x^4-3p^3q^2x^5 -p^2q^3x^5-p^3q^3x^6)}{(1-px)^4(1-qx)^5}.
\end{align*}

With $n=5$, we have
\begin{multline*}
 G_5(p,q,x) = pqx+pq(5p+8q)x^2 + pq(14p^2+25pq+31q^2)x^3 + \\ pq(30p^3+56p^2q+75pq^2+85q^3)x^4 + \cdots
\end{multline*}
which refines \eqref{eq:G5}, and indeed by setting $p=q=1$ and simplifying we find
\[
 G_5(1,1,x) = \frac{x+7x^2+7x^3+x^4}{(1-x)^6}.
\]

\subsection{The refinement in terms of linear extensions}\label{sec:linearrefined}

While the recurrence in Theorem~\ref{thm:refined}(a) is good for computations, its connection to alternating permutations is rather opaque. We make the connection clearer in this section. To do this, we now count big returns of $u \in U_n$ according to whether they are greater or smaller than $u(1)$. First, define 
\[
\ret_1^-(u) = \left|\Ret_1(u) \cap \{1,2,\ldots,u(1)\}\right| \quad\text{and}\quad \ret_1^+(u) = \left|\Ret_1(u) \cap \{u(1)+1,\ldots,n-1\}\right|.
\]
Now we define 
\[
 U_n(s,t) = \sum_{u \in U_n} s^{\ret_1^-(u)}t^{\ret_1^+(u)}\left( \frac{1-t}{1-s}\right)^{u(1)},
\]
so that
\[
 U_n(t,t) = U_n(t).
\]
For example, with $n=4$, by summing over $u \in U_4 = \{ 1324, 1423, 2314, 2413, 3412\}$ we have
\[
 U_4(s,t) = \frac{1-t}{1-s} + \frac{t(1-t)}{(1-s)} + \frac{s(1-t)^2}{(1-s)^2}+ \frac{st(1-t)^2}{(1-s)^2} + \frac{s(1-t)^3}{(1-s)^3}.
\]

The main result of this section is the following theorem, from which it becomes much clearer why $G_n(1,1,x) = xU_n(x)/(1-x)^{n+1}$.

\begin{thm}\label{thm:Gperms}
For all $n\geq 1$, we have
\begin{equation}\label{eq:Gperms}
 G_n(p,q,x) = \sum_{u \in U_n} pqx\frac{ (px)^{\ret_1^-(u)}(qx)^{\ret_1^+(u)} }{(1-px)^{u(1)}(1-qx)^{n+1-u(1)}} = \frac{pqx U_n(px,qx) }{(1-qx)^{n+1}}.
\end{equation}
\end{thm}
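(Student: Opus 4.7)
The plan is to compute $G_n(p,q,x)$ directly by combining the Fundamental Lemma of relaxed $P$-partitions (Lemma \ref{thm:ftpp2}) with the bijection $\LL(\varepsilon Z_n) \leftrightarrow U_n$ of Proposition \ref{prp:bij} (or equivalently Observation \ref{obs:ZU} applied to $Z_n$ itself). Since $Z_n$ is successive, I would decompose $\A_n(m) = \A^1(Z_n;m)$ as the disjoint union $\bigsqcup_{\pi \in \LL(Z_n)} \A^1(\pi;m)$, and for each chain $\pi \in \LL(Z_n)$ compute the weighted generating function over $\A^1(\pi;m)$ bounded by $m$, where each $P$-partition $g$ is weighted by $p^{g(1)}q^{m+1-g(1)}x^m$, and then sum over all $m \geq 1$.

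For a fixed $\pi \in \LL(Z_n)$ with $u := \pi^{-1} \in U_n$ and $k := u(1)$ (the position of $1$ in $\pi$), write $a_i = g(\pi(i))$. Then the contribution to $G_n$ is indexed by sequences $1 \leq a_1 \leq a_2 \leq \cdots \leq a_n \leq m$ with $a_i < a_{i+1}$ when $i \in \Des_1(\pi) = \Ret_1(u)$, and the weight is $p^{a_k}q^{m+1-a_k}x^m$ since $a_k = g(1)$. I would now convert to gap variables $c_0 = a_1 - 1$, $c_i = a_{i+1}-a_i$ for $1 \leq i \leq n-1$, and $c_n = m - a_n$, so that $m = 1 + \sum_{i=0}^n c_i$ and $a_k = 1 + \sum_{i<k} c_i$. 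The monomial weight then factors as
\[
 pqx \cdot \prod_{i=0}^{k-1}(px)^{c_i} \prod_{i=k}^{n}(qx)^{c_i},
\]
and the sum over valid gap tuples splits into independent geometric sums: $c_i$ with $i \notin \Des_1(\pi)$ ranges over $\NN$, while $c_i$ with $i \in \Des_1(\pi)$ ranges over the positive integers, contributing an extra factor of $px$ (if $i < k$) or $qx$ (if $i \geq k$).

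Collecting factors yields a product
\[
 \frac{pqx \cdot (px)^{|\Des_1(\pi) \cap [1,k-1]|}(qx)^{|\Des_1(\pi) \cap [k,n-1]|}}{(1-px)^{k}(1-qx)^{n+1-k}}.
\]
The key observation, which I expect to be the only subtle point, is that $k \notin \Des_1(\pi)$: since $\pi(k) = 1$, we have $\pi(k) = 1 < \pi(k+1) + 1$, so $k$ is not even an ordinary descent of $\pi$. Consequently $|\Des_1(\pi)\cap[1,k-1]| = |\Ret_1(u)\cap[1,k]| = \ret_1^-(u)$ and $|\Des_1(\pi)\cap[k,n-1]| = |\Ret_1(u)\cap[k+1,n-1]| = \ret_1^+(u)$. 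Summing over all $u \in U_n$ gives the first equality of \eqref{eq:Gperms}.

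For the second equality, I would simply factor $(1-qx)^{-(n+1)}$ out of each summand: multiplying and dividing each term by $(1-qx)^{u(1)}$ produces $(1-qx)^{u(1)}/(1-px)^{u(1)} = ((1-qx)/(1-px))^{u(1)}$, which is exactly the weight appearing in the definition of $U_n(s,t)$ with $s = px$, $t = qx$. Pulling $pqx/(1-qx)^{n+1}$ outside the sum then yields $pqx\, U_n(px,qx)/(1-qx)^{n+1}$, completing the proof.
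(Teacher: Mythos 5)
Your proposal is correct and follows essentially the same route as the paper: decompose $G_n(p,q,x)$ over the linear extensions of the successive poset $Z_n$ via Lemma \ref{thm:ftpp2}, compute each chain's contribution by summing independent geometric series over gap variables (the paper phrases this as counting barred permutations, which is the same computation), and translate $\Des_1(\pi)$ into $\Ret_1(u)$ for $u=\pi^{-1}$, noting that position $u(1)$ is never a descent of $\pi$ so the split of big descents at $u(1)$ matches $\ret_1^-(u)$ and $\ret_1^+(u)$. No gaps.
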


For example, we can now express $G_4(p,q,x)$ as follows:
\begin{align*}
G_4(p,q,x) &= \frac{ pqx }{(1-px)(1-qx)^4} + \frac{ pq^2x^2 }{(1-px)(1-qx)^4} \\
 & \qquad + \frac{ p^2qx^2 }{(1-px)^2(1-qx)^3} + \frac{ p^2q^2x^3 }{(1-px)^2(1-qx)^3} + \frac{ p^2qx^2 }{(1-px)^3(1-qx)^2},\\
 &=\frac{pqx( \alpha(1+qx) + \alpha^2px(1+qx) + \alpha^3 px )}{(1-qx)^5},
\end{align*}
where $\alpha = (1-qx)/(1-px)$.

\begin{proof}[Proof of Theorem \ref{thm:Gperms}]
To prove Theorem \ref{thm:Gperms}, we present refined generating functions for linear extensions $\pi$ in $\LL(Z_n)$, phrased in terms of alternating permutations $u = \pi^{-1}$. 

By Theorem \ref{thm:ftpp2}, each $f \in \A_n(m)$ has $f \in \A^1(\pi;m)$ for some linear extension $\pi$ in $\LL(Z_n)$. That is,
\[
 G_n(p,q,x) = \sum_{\pi \in \LL(Z_n)} G_{\pi}(p,q,x)
\]
where $G_{\pi}(p,q,x) = \sum_{m\geq 1} \op_{\pi}(p,q;m) x^m$ and
\[
\op_{\pi}(p,q;m) = \sum_{f \in \A^1(\pi;m)} p^{f(1)}q^{m+1-f(1)}.
\]
To put it another way, an element $f$ of $\A^1(\pi;m)$ satisfies
\[
1\leq f(\pi(1))\leq f(\pi(2)) \leq \cdots \leq f(\pi(n)) \leq m
\]
with $f(\pi(i)) < f(\pi(i+1))$ if $i \in \Des_1(\pi)$, and to compute $G_{\pi}(p,q,x)$, we wish to assign the weight $p^{f(1)}q^{m+1-f(1)}x^m$ to each $f$. 

We accomplish the task with a variation on the notion of a \emph{barred permutation} on $\pi$, where we decorate $\pi$ with some number of bars between consecutive entries (or at the beginning or end) of $\pi$. (This is also known as the ``balls in boxes'' method as described in \cite[Section 1.9]{EC1}.) For $f \in \A^1(\pi;m)$, we place $m$ bars in total in $\pi$. Further, we require that there be at least one bar between $\pi(i)$ and $\pi(i+1)$ if $i \in \Des_1(\pi)$. The correspondence with $\pi$-partitions is that $f(a)$ is the number of bars to the left of $a$. To align our bookkeeping to ensure $1\leq f(\pi(1))$, we also require that there is at least one bar to the left of $\pi(1)$. In this scheme, we are then assigning $f$ the weight
\[
 p^{i}q^{j+1}x^{i+j}=p^{\scriptstyle (\# \textrm{ bars left of } 1)}q^{(1+\scriptstyle \# \textrm{ bars right of } 1)}x^{\scriptstyle (\# \textrm{ of bars} )}.
\]

For example, suppose $u = 382719465 \in U_9$, so that $\pi = u^{-1} = 531798426 \in \LL(Z_9)$. An example of a barred permutation on $\pi$ is
\[
 \pi'=|5|3||1|798|4|2|6||| ,
\]
which would correspond to the $\pi$-partition
\[
 (f(5), f(3), f(1), f(7), f(9), f(8), f(4), f(2), f(6)) = (1,2,4,5,5,5,6,7,8).
\]
Here there are eleven bars total, and we say $\pi'$ has weight $\wt(\pi') = p^4 q^8 x^{11}$. 

If $u(1) = r$, then $\pi(r)=1$, and hence there are $r$ gaps to the left of $1$ in which bars can be placed independently. Each of these gaps contributes a factor of $1/(1-px)=1+px+p^2x^2+\cdots$ to the generating function for all barred permutations on $\pi$. Likewise, there are $n+1-r$ gaps to the right of $1$, and these each contribute a factor of $1/(1-qx)$. In the gaps with big descents, and to the left of $\pi(1)$,  we must have at least one bar. If $i$ is the number of big descents among the first $r$ gaps, and $j$ is the number of big descents among the final $n+1-r$ gaps, we see
\[
 G_{\pi}(p,q,x) = \sum_{\scriptstyle \textrm{barred permutations $\pi'$ on $\pi$}} \wt(\pi') = \frac{(px)^{i+1}}{(1-px)^r}\cdot \frac{q(qx)^j}{(1-qx)^{n+1-r}}.
\]

We translate this expression into the language of alternating permutations by recalling that
\[
 \ret_1^-(u) = \left|\Ret_1(u) \cap \{1,2,\ldots,u(1)\}\right| = \left|\Des_1(\pi) \cap\{1,2,\ldots,u(1)\}\right|
\]
and
\[
 \ret_1^+(u) = \ret_1(u) - \ret_1^-(u).
\]
Thus, with $u = \pi^{-1}$, we have
\[
 G_{\pi}(p,q,x) = pqx\frac{ (px)^{\ret_1^-(u)} (qx)^{\ret_1^+(u)} }{ (1-px)^{u(1)}(1-qx)^{n+1-u(1)}}
\]
and the theorem follows.
\end{proof}

As a corollary of Theorem \ref{thm:Gperms} and Theorem \ref{thm:refined}(a), we obtain the expression
\begin{equation}\label{eq:Unrec}
 U_{n+1}(s,t) = \frac{1-t}{s-t}\left[ \left(\frac{1-t}{1-s}\right)^{n+1} sU_n(t,s) - tU_n(t,t) \right]
\end{equation}
for any $n\geq 0$; this is Theorem \ref{thm:refined}(b).

\begin{proof}[Proof of Theorem \ref{thm:refined}\textup{(}b\textup{)}]
By direct substitution of \eqref{eq:Gperms} in the recurrence given in \eqref{eq:Gqrec}, we have
\[
 \frac{pqxU_{n+1}(px,qx)}{(1-qx)^{n+2}} = \frac{p}{p-q}\left[\frac{pqx U_n(qx,px)}{(1-px)^{n+1}} - \frac{q^2xU_n(qx,qx)}{(1-qx)^{n+1}} \right].
\]
Canceling the like terms of $pqx$ and multiplying by $(1-qx)^{n+2}$ on both sides, we find
\[
U_{n+1}(px,qx) = \frac{1-qx}{p-q}\left[ \left( \frac{1-qx}{1-px} \right)^{n+1} pU_n(qx,px) - qU_n(qx,qx) \right],
\]
from which Equation \eqref{eq:Unrec} follows upon setting $p=s$, $q=t$, and $x=1$. 
\end{proof}

By taking appropriate limits, we can now prove
\begin{equation}\label{eq:Gdiff}
 G_{n+1}(x) = \frac{d}{dq}\left[ G_n(1,q,x) \right]_{q=1}
\end{equation}
and 
\begin{equation}\label{eq:Udiff}
 U_{n+1}(t) = t(1-t)\frac{d}{dt}\left[ U_n(s,t) \right]_{s=t} + (1+nt)U_n(t),
\end{equation}
which are parts (a) and (b1) of Corollary \ref{cor:deriv}, respectively. It is readily checked that (b1) and (b2) are equivalent given $Z(t)=tU_n(t)$.

\begin{proof}[Proof of Corollary \ref{cor:deriv}]
By setting $q=1$ in Equation \eqref{eq:Gqrec}, we have
\begin{equation}\label{eq:Grecq1}
G_{n+1}(p,1,x) = p\left[ \frac{G_n(1,p,x) - G_n(1,1,x)}{p-1}\right].
\end{equation}
Consider the limit as $p\to 1$. Taking the limit on the left side of \eqref{eq:Grecq1} gives $G_{n+1}(1,1,x) = G_{n+1}(x)$. Taking the limit on the right side yields $1\cdot \frac{d}{dp}\left[G_n(1,p,x)\right]_{p=1}$, which is equivalent to \eqref{eq:Gdiff}.

Next, we take the limit as $s\to t$ in \eqref{eq:Unrec}:
\begin{align*}
 U_{n+1}(t) &= \lim_{s\to t} U_{n+1}(s,t), \\
  &=(1-t)\lim_{s\to t}\left[ \frac{\left(\frac{1-t}{1-s}\right)^{n+1}sU_n(t,s) - \left(\frac{1-t}{1-t}\right)^{n+1}tU_n(t,t)}{s-t}\right], \\
  &=(1-t)^{n+2}\frac{d}{ds}\left[ \frac{sU_n(t,s)}{(1-s)^{n+1}} \right]_{s=t},\\
  &=(1-t)^{n+2}\left[\frac{s(1-s)\frac{d}{ds}\left[ U_n(t,s)\right] + (1+ns)U_n(t,s)}{(1-s)^{n+2}}\right]_{s=t},\\
  &=t(1-t)\frac{d}{ds}\left[U_n(t,s)\right]_{s=t} + (1+nt)U_n(t,t).
\end{align*}
Equation \eqref{eq:Udiff} follows by recognizing that $\frac{d}{ds}\left[ U_n(t,s) \right]_{s=t} = \frac{d}{dt}\left[ U_n(s,t)\right]_{s=t}$ and $U_n(t,t) = U_n(t)$.
\end{proof}

\section{Other interpretations for \texorpdfstring{$Z_n(t)$}{Z_n(t)} and \texorpdfstring{$\op_n(m)$}{Ω_n(m)}} \label{s-interp}

We now provide several interpretations for the number of $P$-partitions in $\A_n(m)=\A(Z_n;m)$ that we have found in the literature. A similar summary was provided by Xin and Zhong in \cite{XinZhong2022}.

\begin{itemize}
\setlength\itemsep{1em}

\item For any naturally labeled poset $P$ there is a convex simplicial cone in $\RR^n/\langle 1,1,\ldots,1\rangle$, called the \emph{Coxeter cone} $C(P)$, given by $x_i \leq x_j$ whenever $i <_P j$. The number of integer points in $C(P)\cap [0,m]^n$ is given by $\op_P(m+1)$. Moreover, $C(P)$ is a union of cones of the form $C(\pi)$, given by $x_{\pi(1)} \leq  x_{\pi(2)} \leq \cdots \leq x_{\pi(n)}$, where $\pi \in \LL(P)$. By intersecting $C(P)=\bigcup_{\pi \in \LL(P)} C(\pi)$ with a sphere, there is an induced simplicial complex $\Delta(P)$ of dimension $d=n-2$. The \emph{$f$-polynomial} of $\Delta(P)$ is the generating function for simplices in $\Delta(P)$ according to dimension, $f(t) = \sum_{F\in \Delta(P)} t^{\dim F}$, and the \emph{$h$-polynomial} of $\Delta(P)$ is the transformation of $f$ given by $h(t) = (1-t)^{d+1} f(t/(1-t))$. In the case of Coxeter cones of naturally labeled posets, we have $h(t) = A_P(t)/t$, and therefore $Z_n(t)/t$ is the $h$-polynomial for the simplicial complex $\Delta(\varepsilon Z_n)$. 

For example, with $n=4$, we have the following image of $\Delta(Z_4)$, which is the set $C(Z_4) =\{\, (x_1,x_2,x_3,x_4) : x_1\leq x_3 \geq x_2 \leq x_4 \,\}$, drawn in $\RR^4/\langle 1,1,1,1\rangle \cong \RR^3$, then intersected with a $2$-sphere:

\[
\begin{tikzpicture}[xscale=.6, yscale=.6]
\tikzstyle{state1}=[rectangle,scale=1];
\draw (0,-4.5) node[state1] (1324)
 {$1234$};
\draw (-2.5,-2) node[state1] (2314)
 {$2134$};
\draw (2.5,-2) node[state1] (1423)
 {$1243$};
\draw (0,.5) node[state1] (2413)
 {$2143$};
\draw (0,3) node[state1] (3412)
 {$2413$};
\coordinate (a) at (0,5);
\coordinate (b) at (5,-5);
\coordinate (c) at (-5,-5);
\coordinate (d) at (3.7,1.2);
\coordinate (e) at (-3.7,1.2);
\draw (a) to [in=90,out=-30] node[midway,sloped,above] {\tiny $x_2=x_4$} (b);
\draw (b) to [in=-20,out=200] node[midway,sloped,below] {\tiny $x_2=x_3$} (c);
\draw (c) to [in=210,out=90] node[midway,sloped,above] {\tiny $x_1=x_3$} (a);
\draw (c) to [in=220,out=30] node[near start,sloped,inner sep=1,fill=white] {\tiny $x_1=x_2$} (d);
\draw (b) to [in=-40,out=150] node[near start,sloped,inner sep=1,fill=white] {\tiny $x_3=x_4$} (e);
\draw (d) to [in=10,out=170] node[midway,sloped,inner sep=1,fill=white] {\tiny $x_1=x_4$} (e);
\draw (a) node[circle, fill=black, inner sep=2] {};
\draw (b) node[circle, fill=black, inner sep=2] {};
\draw (c) node[circle, fill=black, inner sep=2] {};
\draw (d) node[circle, fill=black, inner sep=2] {};
\draw (e) node[circle, fill=black, inner sep=2] {};
\draw (0,-1.76) node[circle, fill=black, inner sep=2] {};
\end{tikzpicture} 
\]

We see the triangles here are labeled by linear extensions of $\varepsilon Z_4$. Counting faces, we have one empty face, six vertices, ten edges, and five triangles, so the $f$-polynomial is $f(t) = 1+6t+10t^2+5t^3$. By the transformation indicated, we have
\begin{align*}
h(t) &= (1-t)^3f(t/(1-t)) \\
 &= (1-t)^3+6t(1-t)^2 + 10t^2(1-t)+5t^3 \\
 &= 1+ (-3+6)t +(3-12+10)t^2+(-1+6-10+5)t^3 \\
 &= 1+3t+t^2,
\end{align*} 
which we recognize as $Z_4(t)/t$.

\item There are two polytopes commonly associated to any poset $P$, known as the \emph{order polytope} $\OO(P)$ (which is just $C(P)\cap [0,1]^n$) and the \emph{chain polytope} $\CC(P)$. The \emph{Ehrhart polynomial} of a polytope counts integer points in the $m$th dilation of the polytope. By \cite[Theorem 4.1]{Stanley1986}, for any finite poset $P$ the Ehrhart polynomials for $\OO(P)$ and $\CC(P)$ are the same, and they both equal $\op_P(m+1)$. In this paradigm, the \emph{$h^*$-polynomial} is the numerator of the generating function of $\op_P(m+1)$. Thus $h^*(\OO(Z_n);t)= h^*(\CC(Z_n);t)=Z_n(t)/t$. (While it turns out that the $h^*$-polynomial for these polytopes is the same as the $h$-polynomial of the simplicial complex previously mentioned, this is not true for all posets.) For $Z_n$, the order complex $\OO(Z_n)$ is defined by those points $\mathbf{x} \in \RR^n$ such that $0\leq x_i \leq 1$ for all $i\in[n]$ and
\[
x_1 \leq x_2 \geq x_3 \leq \cdots, 
\]
i.e., $x_{2i-1} \leq x_{2i}$ for $1\leq i \leq n/2$ and $x_{n-1}\geq x_n$ if $n$ is odd.
The chain complex $\CC(Z_n)$ is defined by points $\mathbf{y} \in \RR^n$ such that $0 \leq y_i \leq 1$ and
\[
y_i + y_{i+1} \leq 1 \mbox{ for } 1\leq i \leq n-1.
\]
As shown by Stanley \cite[Theorem 2.3]{Stanley1986} and reiterated by Coons and Sullivant \cite[Proposition 6.7]{CoonsSullivant2020}, these two polytopes are affinely equivalent under the invertible map $\phi \colon \RR^n \to \RR^n$ given by 
\[
\phi(x_i) = \begin{cases} 
 x_i, & \mbox{ if $i$ is odd},\\
 1-x_i, & \mbox{ if $i$ is even}.
\end{cases}
\]
It was conjectured by Kirillov \cite[Conjecture 3.11]{Kirillov2000} that the $h^*$-polynomial of $\CC(Z_n)$ has unimodal coefficients, and this was later proved independently by Chen and Zhang \cite{ChenZhang2016} and by Coons and Sullivant \cite{CoonsSullivant2023}; our gamma-nonnegativity result (Theorem \ref{t-gamma}) yields a third proof. 

In a different direction, we remark that Diaconis and Wood studied $\CC(Z_n)$ in \cite{DiaconisWood2013}, interpreting it as the set of all $(n+1)\times (n+1)$ \emph{tridiagonal doubly stochastic matrices}. For example, when $n=5$ these are the matrices:
\[
 \left( 
 \begin{array}{cccccc} 
 1-y_1 & y_1 & 0 & 0 & 0 & 0 \\ 
 y_1 & 1-y_1-y_2 & y_2 & 0 & 0 & 0 \\
 0 & y_2 & 1-y_2-y_3 & y_3 & 0 & 0 \\
 0 & 0 & y_3 & 1-y_3-y_4 & y_4 & 0 \\
 0 & 0 & 0 & y_4 & 1-y_4 -y_5 & y_5 \\
 0 & 0 & 0 & 0 & y_5 & 1- y_5 
 \end{array}
 \right)
\]
such that $y_i \geq 0$ for all $i$ and $1-y_i - y_{i+1} \geq 0$ for $i=1,\ldots,4$. While they do not do any explicit work with integer points in dilations of the polytope, they study the interplay between probabilistic results for these matrices and probabilistic results for random alternating permutations. 
 
\item To any graph we associate a \emph{magic labeling} by assigning nonnegative integer weights to the edges such that the sum of the weights at each vertex is a constant. (Loops are counted only once.) While Stanley studied the enumeration of magic labelings of graphs in \cite{Stanley1976}, it was in unpublished notes from 1973 \cite[pp.\ 30--31]{Stanley1973} that he considered, without proof, the magic labelings of a path graph on $n+1$ vertices with loops. For example, consider the graph below with $n=7$:
\[
 \begin{tikzpicture}
  \draw (0,0) node[circle,fill=black,inner sep=2] {} -- node[midway,above] {$e_1$} (1,0) node[circle,fill=black,inner sep=2] {} -- node[midway,above] {$e_2$} (2,0) node[circle,fill=black,inner sep=2] {} -- node[midway,above] {$e_3$} (3,0) node[circle,fill=black,inner sep=2] {}-- node[midway,above] {$e_4$} (4,0) node[circle,fill=black,inner sep=2] {} -- node[midway,above] {$e_5$} (5,0) node[circle,fill=black,inner sep=2] {}-- node[midway,above] {$e_6$} (6,0) node[circle,fill=black,inner sep=2] {}-- node[midway,above] {$e_7$} (7,0) node[circle,fill=black,inner sep=2] {};
  \foreach \x in {0,...,7}{
   \draw (\x,0) .. controls (\x-.5,-1) and (\x+.5,-1) .. (\x,0);
   \draw (\x,-1) node {$f_{\x}$};
  }
 \end{tikzpicture}
\]
For a fixed sum $m$, we give the loops weight $f_0 =m-e_1$, $f_n=m-e_n$, and $f_i = m-e_i - e_{i+1}$ for $1\leq i \leq n-1$. We have a magic labeling provided the $e_i$ satisfy $0\leq e_i \leq m$ and $e_i + e_{i+1} \leq m$ for each $1\leq i\leq n-1$. Thus the number of magic labelings with sum $m$ for the path graph on $n+1$ vertices is equal to the number of integer points in the $m$th dilation of the chain polytope $\CC(Z_n)$, which we know to be $\op_n(m+1)$.

This idea was explored in great detail in work of B\'ona and Ju \cite{BonaJu2006}, who provided many enumerative results for computing $\op_n(m)$. Both Stanley's 1973 notes and the paper of B\'ona and Ju are notable for including the computation of some small values of $Z_n(t)$. See \cite[Table 2]{BonaJu2006} for the coefficients of $Z_n(t)$ for $n\leq 9$. 

\item B\'ona, Ju and Yoshida generalized some of the perspective of \cite{BonaJu2006} in \cite{BonaJuYoshida2007}, where they counted integer points in \emph{edge polytopes} of graphs. These are defined by assigning nonnegative weights $y_i$ to vertices such that $y_i+y_j \leq 1$ if $i$ and $j$ are adjacent in the graph. Thus for the path graph on $n+1$ vertices, we have $y_i + y_{i+1}\leq 1$ for $1\leq i \leq n-1$ and the edge polytope is the same as the chain complex $\CC(Z_n)$. Thus, $\op_n(m+1)$ is its Ehrhart polynomial and $Z_n(t)/t$ is its $h^*$-polynomial. 

\item Gonz\'alez d'Le\'on, Hanusa, Morales, and Yip \cite{GHMY} showed that $\CC(Z_n)$ is integrally equivalent to the \emph{flow polytope} of the graph $G(2,n+2)$.  That is, $\op_n(m+1)$ is the Ehrhart polynomial of the $m$th dilation of $G(2,n+2)$. Without going into too much detail, $G(2,n+2)$ is a directed graph with vertex set $[n+2]$ and directed edges $i\to i+1$ for $1\leq i \leq n+1$, and $i \to i+2$ for $1\leq i\leq n$. We assign nonnegative weights to each directed edge, subject to the constraint that the sum of all incoming edge weights equal the sum of all outgoing edge weights. (This is known as the ``conservation of flow.'') Vertex 1 is a \emph{source} vertex, and vertex $n+2$ is the \emph{sink}. The flow polytope is the polytope in $\RR^{2n+1}$ whose coordinates are the edge weights. For example, $G(2,6)$ is shown below:
\[
\begin{tikzpicture}[>=stealth,xscale=1.5]
  \draw (0,0) node[circle,fill=black,inner sep=1] {} -- node[midway,fill=white,inner sep=1] {$e_1$} (1,0) node[circle,fill=black,inner sep=1] {} -- node[midway,fill=white,inner sep=1] {$e_2$} (2,0) node[circle,fill=black,inner sep=1] {} -- node[midway,fill=white,inner sep=1] {$e_3$} (3,0) node[circle,fill=black,inner sep=1] {}-- node[midway,fill=white,inner sep=1] {$e_4$} (4,0) node[circle,fill=black,inner sep=1] {} -- node[midway,fill=white,inner sep=1] {$e_5$} (5,0) node[circle,fill=black,inner sep=1] {};
  \foreach \x in {0,2}{
   \draw[->] (\x,0) .. controls (\x+.75,1) and (\x+1.25,1) .. (\x+2,0);
   \draw[->] (\x+1,0) .. controls (\x+1.75,-1) and (\x+2.25,-1) .. (\x+3,0);
  }
  \draw (1,1) node {$y_1$};
  \draw (2,-1) node {$y_2$};
  \draw (3,1) node {$y_3$};
  \draw (4,-1) node {$y_4$};
  \draw[->] (-1,0)-- node[midway,above] {$1$} (0,0);
  \draw[->] (5,0)-- node[midway,above] {$1$} (6,0);
 \end{tikzpicture}
\]
If the source has unit flow, we get the following system of equations:
\begin{align*}
 1 &= y_1+e_1, \\
 e_1 &=y_2+e_2, \\
 y_1+e_2 &= y_3+e_3, \\
 y_2+e_3 &= y_4+e_4, \\
 y_3+e_4 &= e_5, \\
 y_4+e_5 &=1.
\end{align*}
By adding consecutive equations, we can deduce $y_i+y_{i+1}+e_{i+1} = 1$, and therefore $y_i + y_{i+1}\leq 1$ as in the chain polytope. See also Brunner and Hanusa \cite{Brunner} for more recent work on flow polytopes of the graphs $G(2,n)$, which they call \emph{zigzag graphs}.

\item Coons and Sullivant \cite{CoonsSullivant2020} studied the \emph{toric vanishing ideal} for the Cavender--Farris--Neyman model for phylogenetic trees with a molecular clock. The associated polytope can be described in terms of an edge weighting of a rooted binary tree. Incredibly, any tree $T$ with $n+1$ leaves has Ehrhart polynomial $\op_n(m+1)$ \cite[Corollary 6.8]{CoonsSullivant2020}. When $T$ is the binary tree with $n+1$ leaves and $n$ internal nodes, this polytope has the same defining inequalities as the chain polytope $\CC(Z_n)$; see \cite[Corollary 4.13]{CoonsSullivant2020}.

\item Berman and K\"ohler \cite[Example 2.3]{BermanKohler1976} showed that $\op_n(m+1)$ is the number of order ideals of the poset $Z_n \times I_m$, where $I_m$ is an $m$-element chain. For each $i\in[n]$, we let $m+1-a_i$ denote the number of elements in the $i$th column of the ideal. Then $a_1\leq a_2 \geq a_3 \leq \cdots$. For example, with $n=7$ and $m=5$, here is a picture of the order ideal such that the vector of the number of elements per column is $(4,2,5,4,4,3,5)$, so that $(a_1,\ldots,a_7)=(1,3,0,1,1,2,0) \leftrightarrow (2,4,1,2,2,3,1) \in \A_7(6)$:
\[
\begin{tikzpicture}[baseline =3cm]
  \draw (0,1)--(0,5);
  \draw (1,2)--(1,6);
  \draw (2,1)--(2,5);
  \draw (3,2)--(3,6);
  \draw (4,1)--(4,5);
  \draw (5,2)--(5,6);
  \draw (6,1)--(6,5);
  \foreach \x in {1,3,5}{
   \foreach \y in {2,...,6}{
   \draw (\x-1,\y-1) node[circle,fill=black,inner sep=2] {} -- (\x,\y) node[circle,fill=black,inner sep=2] {} -- (\x+1,\y-1) node[circle,fill=black,inner sep=2] {};
   }
  }
  \draw[gray,opacity=.5,line width=4] (0,4)--(0,1);
  \draw[gray,opacity=.5,line width=4] (1,3)--(1,2);
  \draw[gray,opacity=.5,line width=4] (2,5)--(2,1);
  \draw[gray,opacity=.5,line width=4] (3,5)--(3,2);
  \draw[gray,opacity=.5,line width=4] (4,4)--(4,1);
  \draw[gray,opacity=.5,line width=4] (5,4)--(5,2);
  \draw[gray,opacity=.5,line width=4] (6,5)--(6,1);
  \draw[gray,opacity=.5,line width=4] (0,1)--(1,2)--(2,1)--(3,2)--(4,1)--(5,2)--(6,1);
  \draw[gray,opacity=.5,line width=4] (0,2)--(1,3)--(2,2)--(3,3)--(4,2)--(5,3)--(6,2);
  \draw[gray,opacity=.5,line width=4] (2,3)--(3,4)--(4,3)--(5,4)--(6,3);
  \draw[gray,opacity=.5,line width=4] (2,4)--(3,5)--(4,4);
 \end{tikzpicture}
 \]

\item In the chemistry literature, the study of \emph{Kekul\'e structures} of benzenoid hydrocarbons is modeled by perfect matchings of subgraphs of the hexagonal lattice. The major touchstone for this work is a book by Cyvin and Gutman \cite{CyvinGutman1988}. One family of graphs they considered is denoted $Z(n,m)$, which is an $n\times m$ grid of hexagons, best illustrated with an example. Below we see the grid $Z(7,5)$, along with one of its perfect matchings. 
\[
\begin{tikzpicture}[xscale=.25,yscale=.43,baseline=3cm]
  \foreach \x in {3,9,15}{
   \foreach \y in {1,...,6}{
    \draw (\x-4,2*\y-1) node[circle,fill=black,inner sep=1] {};
    \draw (\x-2,2*\y-1) node[circle,fill=black,inner sep=1] {};
    \draw (\x-1,2*\y) node[circle,fill=black,inner sep=1] {};
    \draw (\x+1,2*\y) node[circle,fill=black,inner sep=1] {};
    \draw (\x+2,2*\y-1) node[circle,fill=black,inner sep=1] {};
    \draw (\x+4,2*\y-1) node[circle,fill=black,inner sep=1] {};
   }
  }
  \foreach \x in {-2,20}{
   \foreach \y in {1,...,5}{
    \draw (\x,2*\y) node[circle,fill=black,inner sep=1] {};
   }
  }
  \foreach \x in {0,...,3}{
   \foreach \y in {0,...,4}{
    \draw (6*\x-1,2*\y+3)--(6*\x-2,2*\y+2)--(6*\x-1,2*\y+1)--(6*\x+1,2*\y+1)--(6*\x+2,2*\y+2)--(6*\x+1,2*\y+3);
   }
  }
  \foreach \x in {0,1,2}{
   \foreach \y in {0,...,5}{
    \draw (6*\x+2,2*\y+2)--(6*\x+4,2*\y+2);
   }
  }
  \foreach \x in {0,...,3}{
   \draw (6*\x-1,11)--(6*\x+1,11);
  }
  \foreach \x in {0,1,2}{
   \draw (6*\x+1,11)--(6*\x+2,12);
   \draw (6*\x+4,12)--(6*\x+5,11);
  }
\end{tikzpicture}
\qquad \qquad 
\begin{tikzpicture}[xscale=.25,yscale=.43,baseline=3cm]
  \foreach \x in {3,9,15}{
   \foreach \y in {1,...,6}{
    \draw (\x-4,2*\y-1) node[circle,fill=black,inner sep=1] {};
    \draw (\x-2,2*\y-1) node[circle,fill=black,inner sep=1] {};
    \draw (\x-1,2*\y) node[circle,fill=black,inner sep=1] {};
    \draw (\x+1,2*\y) node[circle,fill=black,inner sep=1] {};
    \draw (\x+2,2*\y-1) node[circle,fill=black,inner sep=1] {};
    \draw (\x+4,2*\y-1) node[circle,fill=black,inner sep=1] {};
   }
  }
  \foreach \x in {-2,20}{
   \foreach \y in {1,...,5}{
    \draw (\x,2*\y) node[circle,fill=black,inner sep=1] {};
   }
  }
  \draw[dashed] (-3,1)--(-3,11)--(3,13)--(6,12)--(9,13)--(12,12)--(15,13)--(21,11)--(21,1)--(18,0)--(15,1)--(12,0)--(9,1)--(6,0)--(3,1)--(0,0)--(-3,1);
  \foreach \y in {7,9}{
   \draw[dashed] (-3,\y)--(3,\y+2)--(6,\y+1)--(9,\y+2)--(12,\y+1)--(15,\y+2)--(21,\y);
  }
  \draw[dashed] (0,4)--(0,12);
  \draw[dashed] (0,0)--(0,2);
  \draw[dashed] (3,9)--(3,13);
  \draw[dashed] (3,1)--(3,7);
  \draw[dashed] (6,2)--(6,12);
  \draw[dashed] (9,5)--(9,13);
  \draw[dashed] (9,1)--(9,3);
  \draw[dashed] (12,4)--(12,12);
  \draw[dashed] (12,0)--(12,2);
  \draw[dashed] (15,7)--(15,13);
  \draw[dashed] (15,1)--(15,5);
  \draw[dashed] (18,2)--(18,12);
  \draw[dashed] (-3,5)--(0,6)--(3,5)--(9,7)--(12,6)--(15,7)--(21,5);
  \draw[dashed] (-3,3)--(0,4)--(3,3)--(9,5)--(15,3)--(18,4)--(21,3);
  \draw[dashed] (-3,3)--(0,2)--(3,3);
  \draw[dashed] (0,8)--(3,7)--(6,8);
  \draw[dashed] (3,1)--(6,2)--(9,1);
  \draw[dashed] (6,4)--(12,2)--(15,3);
  \draw[dashed] (9,3)--(12,4);
  \draw[dashed] (12,6)--(15,5)--(18,6);
  \draw[dashed] (15,1)--(18,2)--(21,1);
  \draw (-1,3)--(1,3);
  \draw[fill=gray,opacity=.15,draw=none] (-3,3)--(0,4)--(3,3)--(0,2)--(-3,3);
  \foreach \y in {2,...,5}{
    \draw (-2, 2*\y)--(-1, 2*\y+1);
   }
  \draw (1,1)--(2,2);
  \draw (-2,2)--(-1,1);
  \draw (2,8)--(4,8);
  \draw[fill=gray,opacity=.15,draw=none] (0,8)--(3,9)--(6,8)--(3,7)--(0,8);
  \foreach \y in {5,6}{
    \draw (1, 2*\y-1)--(2, 2*\y);
    \draw (5, 2*\y-1)--(4, 2*\y);
  }
  \foreach \y in {2,3}{
    \draw (1, 2*\y+1)--(2, 2*\y);
  }
  \foreach \y in {1,2,3}{
    \draw (5, 2*\y+1)--(4, 2*\y);
  }
  \draw (5,1)--(7,1);
  \draw[fill=gray,opacity=.15,draw=none] (3,1)--(6,2)--(9,1)--(6,0)--(3,1);
  \draw (8,4)--(10,4);
  \draw[fill=gray,opacity=.15,draw=none] (6,4)--(9,5)--(12,4)--(9,3)--(6,4);
  \draw (7,3)--(8,2);
  \foreach \y in {3,...,6}{
    \draw (7, 2*\y-1)--(8, 2*\y);
    \draw (11, 2*\y-1)--(10, 2*\y);
  }
  \draw (11,3)--(13,3);
  \draw[fill=gray,opacity=.15,draw=none] (9,3)--(12,4)--(15,3)--(12,2)--(9,3);
  \draw (10,2)--(11,1);
  \draw (13,1)--(14,2);
  \draw (14,6)--(16,6);
  \draw[fill=gray,opacity=.15,draw=none] (12,6)--(15,7)--(18,6)--(15,5)--(12,6);
  \foreach \y in {4,...,6}{
    \draw (13, 2*\y-1)--(14, 2*\y);
    \draw (17, 2*\y-1)--(16, 2*\y);
  }
  \draw (13,5)--(14,4);
  \draw (17,5)--(16,4);
  \draw (17,3)--(16,2);
  \draw (17,1)--(19,1);
  \draw[fill=gray,opacity=.15,draw=none] (15,1)--(18,2)--(21,1)--(18,0)--(15,1);
  \foreach \y in {1,...,5}{
    \draw (20, 2*\y)--(19, 2*\y+1);
  }
\end{tikzpicture}
\]
As indicated with the shading and dashed lines, there is a clear correspondence with relaxed $P$-partitions of $Z_n$. Let $a_i$ be the height of the unique horizontal edge in column $i$. It can be shown that $0\leq a_i \leq m$ and $a_1 \leq a_2 \geq a_3 \leq \cdots$. Thus the number of perfect matchings of $Z(n,m)$---i.e., the number of Kekul\'e structures---is given by $\op_n(m+1)$. This matching would correspond to $(a_1,\ldots,a_7)=(1,3,0,1,1,2,0) \leftrightarrow (2,4,1,2,2,3,1) \in \A_7(6)$.

\end{itemize}

\section{Further thoughts and open questions} \label{s-conclusion}

We conclude our paper with a discussion of open questions and thoughts for future work.

\subsection{Real roots, log-concavity, and more}

The notion of gamma-nonnegativity is closely related to the properties of log-concavity and real-rootedness \cite{Branden2015}.  Furthermore, Nevo and Petersen \cite{NevoPetersen} have conjectured that many naturally occurring $\gamma$-vectors in fact satisfy the ``Frankl--F\"uredi--Kalai'' inequalities. We conjecture that all three properties hold for $Z_n(t)$.

\begin{conj} \label{cj-rr}
For all $n\geq 1$, each of the following is true:
\begin{enumerate}
 \item[(a)] The polynomial $Z_n(t)$ has only real roots.
 \item[(b)] The sequence $\{z(n,k)\}_{0\leq k\leq n-2}$ is log-concave, i.e., $z(n,k)^2 \geq z(n,k-1)z(n,k+1)$ for each $k\in\{1,2,\ldots,n-3\}$.
 \item[(c)] The vector $\gamma_n = (\gamma_{n,0},\gamma_{n,1},\ldots,\gamma_{n,\lfloor (n-2)/2 \rfloor })$ from Theorem \ref{t-gamma} satisfies the Frankl--F\"uredi--Kalai inequalities, i.e., $\gamma_n$ is the $f$-vector of a balanced simplicial complex. 
\end{enumerate}
\end{conj}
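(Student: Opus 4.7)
The plan is to attack the three parts in a logical order, deriving (b) from (a) and treating (c) separately. For part (a), I would use induction on $n$ with a stronger bivariate hypothesis about $U_n(s,t)$. Specifically, I would attempt to show that $U_n(s,t)$ is stable in an appropriate sense---for instance, that for each fixed positive real $s$, the polynomial $tU_n(s,t)$ has only real roots, and that these roots interlace with those of $Z_n(t)=tU_n(t,t)$ in a manner compatible with the specialization $s=t$. The refined recurrence in Theorem \ref{thm:refined}(b) together with Corollary \ref{cor:deriv} would be the engine of the induction: the expression $Z_{n+1}(t) = t(1-t)\frac{d}{dt}[tU_n(s,t)]_{s=t} + (n+1)tZ_n(t)$ resembles a classical real-rootedness-preserving operator acting on $Z_n$ plus a correction term coming from partial differentiation before specialization, and the inductive hypothesis must be strong enough to control this correction.

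If part (a) is established, part (b) is immediate, since a polynomial with only real roots and nonnegative coefficients has log-concave coefficients by Newton's inequalities. Thus (b) requires no separate argument once (a) is known.

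For part (c), the Frankl--F\"uredi--Kalai inequalities are equivalent to the existence of a balanced simplicial complex whose $f$-vector is $\gamma_n$. The natural starting point is Br\"and\'en's combinatorial description of the $\gamma$-coefficients (mentioned in the remark following Theorem \ref{thm:ZU} via \cite[Theorem 6.4]{Br2}) in terms of a peak statistic on linear extensions in $\LL(\varepsilon Z_n)$. The plan would be to translate this into an explicit complex $\Delta_n$ together with a proper coloring of its vertices using at most $\lfloor (n-2)/2 \rfloor + 1$ colors, such that $\gamma_{n,j}$ equals the number of $(j-1)$-dimensional faces of $\Delta_n$. Given the authors' observation that the corresponding permutation statistic is ``not very nice,'' I would first look for a more transparent model---perhaps built from barred alternating permutations as in Section \ref{sec:linearrefined}, or by decomposing the big-return contributions into a shelling-like structure.

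The hard part will certainly be (c): the FFK conditions are notoriously difficult to verify, typically requiring either an explicit combinatorial model or a delicate Kruskal--Katona-type compression argument. Part (a) is also nontrivial, because the recurrence is not purely in terms of $Z_n$ but involves the bivariate refinement $U_n(s,t)$, so the induction must carry forward a bivariate stability statement rather than univariate real-rootedness alone; finding the right invariant is the crux. Before attempting either, I would verify all three statements computationally for $n$ up to roughly $15$ using Table \ref{tab:gamma} and standard root-finding routines, both as a sanity check and to look for structural patterns in the $\gamma$-vectors that might point to the right simplicial complex.
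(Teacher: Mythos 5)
This statement is a \emph{conjecture} in the paper: the authors do not prove it, and they report only that it has been verified computationally for $n\leq 35$. Your submission is likewise a research plan rather than a proof --- no part of (a), (b), or (c) is actually established --- so the honest assessment is that there is a genuine gap: the entire argument. That said, your plan tracks the authors' own thinking closely. For part (a), your idea of carrying a bivariate interlacing/stability hypothesis for $U_n(s,t)$ through the recurrence is essentially the same observation the authors make immediately after stating the conjecture: they note that the roots of $(1-t)\frac{d}{dt}\left[ tU_n(s,t)\right]_{s=t}$ and of $(n+1)tZ_n(t)$ appear to interlace, which by Corollary \ref{cor:deriv}(b2) would give real-rootedness of $Z_{n+1}(t)$. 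The crux you correctly identify --- finding a bivariate invariant for $U_n(s,t)$ strong enough to propagate through Theorem \ref{thm:refined}(b), which mixes the two variables and swaps their roles at each step --- is precisely the open problem; naming it is not solving it, and the operator in Corollary \ref{cor:deriv}(b2) is \emph{not} a known real-rootedness-preserving operator applied to $Z_n$ alone, since the derivative is taken in the auxiliary variable $s$ before specialization.

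Two smaller points. Your reduction of (b) to (a) via Newton's inequalities is correct and costs nothing, but note that (b) would need an independent argument if (a) resists proof; log-concavity is strictly weaker and might be accessible directly from the recurrences for $z(n,k)$ even if real-rootedness is not. For (c), your plan to build a balanced complex from Br\"and\'en's peak-statistic description of the $\gamma_{n,j}$ is the natural first move, but the authors themselves remark that the resulting permutation statistic is ``not very nice,'' and the Frankl--F\"uredi--Kalai inequalities do not follow from gamma-nonnegativity or from real-rootedness, so (c) is logically independent of (a) and (b) and would require a genuinely new combinatorial construction. In short: your roadmap is sensible and consistent with the paper, but the conjecture remains open at every step where your proposal says ``I would attempt.''
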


This conjecture has been verified for $n\leq 35$. In fact, we have observed that the roots of $(1-t)\frac{d}{dt}\left[ tU_n(s,t) \right]_{s=t}$ and $(n+1)tZ_n(t)$ appear to be real interlacing, which by Corollary \ref{cor:deriv}(b2) would imply part (a) of the conjecture.

\subsection{Entringer number refinements}

The \emph{Entringer number} $E_{n,r}$ is defined to be the number of alternating permutations $u \in U_n$ with $u(1) = r$ \cite{Entringer, Poupard}. Let 
$$U_{n,r} = \{\, u \in U_n : u(1) = r \,\},$$
so that $E_{n,r} = |U_{n,r}|$; see Table \ref{tab:Enr}.

\begin{table}
\[
  \begin{array}{r|cccccc}
  n\backslash r & 1 & 2 & 3 & 4 & 5 & 6
 \\
 \hline
  2& 1 \\
  3& 1 & 1 &  \\
  4& 2 & 2 & 1 \\
  5& 5 & 5 & 4 & 2 \\
  6& 16 & 16 & 14 & 10 & 5 \\
  7& 61 & 61 & 56 & 46 & 32 & 16
  \end{array}
\]
\caption{The Entringer numbers $E_{n,r}$ for $2 \leq n \leq 7$ and $1 \leq r \leq n-1$.}\label{tab:Enr}
\end{table}

We refine the Entringer numbers with the polynomials
\[
 U_{n,r}(s,t) = \sum_{u \in U_{n,r}} s^{\ret_1^-(u)}t^{\ret^+_1(u)},
\]
and we put these all together with
\[
 U_n(s,t,\alpha) = \sum_{u \in U_n} s^{\ret_1^-(u)}t^{\ret^+_1(u)}\alpha^{u(1)} = \sum_{r=1}^{n-1} U_{n,r}(s,t)\alpha^r.
\]
Notice that if $\alpha = (1-t)/(1-s)$, we have $U_n(s,t,(1-t)/(1-s))=U_n(s,t)$, as defined in Section \ref{sec:linearrefined}. For example,
\[
 U_4(s,t,\alpha) = (1+t)\alpha + s(1+t)\alpha^2 + s\alpha^3
\]
and
\[
 U_5(s,t,\alpha) = (1+3t+t^2)\alpha + s(2+3t)\alpha^2 + s(1+s)(1+t)\alpha^3 + s(1+s)\alpha^4.
\]

It can be proved bijectively that $E_{n,r} = E_{n-1,n-r}+E_{n,r+1}$ which, along with the boundary conditions $E_{n,r} = 0$ for $r<1$ and $r\geq n$, allows for quick recursive calculation of the numbers in Table \ref{tab:Enr}. We have not found a similarly nice linear recurrence for the polynomials $U_{n,r}(s,t)$, though we can make two easy observations about the end cases. 

\begin{obs}
For each $n\geq 2$, we have
\[
 U_{n+1,1}(s,t) = U_n(t) \qquad \text{and} \qquad U_{n+1,n}(s,t) = sU_{n-1}(s).
\]
\end{obs}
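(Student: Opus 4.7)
The plan is to prove both identities via simple index-shifting bijections, combined with the equidistribution $\bar U_n(t) = U_n(t)$, where $\bar U_n(t) := \sum_{v \in \bar U_n} t^{\ret_1(v)}$. This preliminary equidistribution is not stated as a lemma but is already implicit in the paper: by Theorem \ref{thm:ZU}, $tU_n(t) = Z_n(t)$; by Theorem \ref{thm:ZbarZ}, $Z_n(t) = A_{o \bar Z_n}(t)$; and the down-up analogue of Proposition \ref{prp:bij} (compare Remark \ref{rem:zbar}) provides a bijection $\LL(o\bar Z_n) \to \bar U_n$ sending $\des$ to $\ret_1$, so $A_{o \bar Z_n}(t) = t\bar U_n(t)$ and hence $\bar U_n(t) = U_n(t)$.

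For the first identity, I first observe that any $u \in U_{n+1,1}$ has $u^{-1}(1) = 1$, so the condition $u^{-1}(1) > u^{-1}(2) + 1$ fails and $1 \notin \Ret_1(u)$; hence $\ret_1^-(u) = 0$ and $U_{n+1,1}(s,t) = \sum_{u \in U_{n+1,1}} t^{\ret_1(u)}$. The map $u \mapsto v$ defined by $v(i) = u(i+1) - 1$ is a bijection $U_{n+1,1} \to \bar U_n$, since removing the leading $1$ and subtracting $1$ from every remaining entry turns the up-down sequence on $\{2,\ldots,n+1\}$ starting at position $2$ into a down-up sequence on $[n]$. Because positions and values are both shifted by $1$, a direct check shows $j \in \Ret_1(u)$ iff $j-1 \in \Ret_1(v)$, so $\ret_1(u) = \ret_1(v)$. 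Therefore $U_{n+1,1}(s,t) = \bar U_n(t) = U_n(t)$.

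For the second identity, $u(1) = n$ and $u(2) > u(1)$ force $u(2) = n+1$, so $u(3), \ldots, u(n+1)$ is a down-up alternating permutation of $\{1,\ldots,n-1\}$; letting $v(i) = u(i+2)$ yields a bijection $U_{n+1,n} \to \bar U_{n-1}$. All returns satisfy $j \leq n = u(1)$, giving $\ret_1^+(u) = 0$. From the fixed positions of $n$ and $n+1$ we have $n \notin \Ret_1(u)$ (since $u^{-1}(n) = 1 \not> 2 + 1 = u^{-1}(n+1) + 1$), while $n-1 \in \Ret_1(u)$ holds automatically (since $u^{-1}(n-1) \geq 3 > 2 = u^{-1}(n) + 1$). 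For $j \leq n-2$, both $j$ and $j+1$ lie at positions $\geq 3$ of $u$, which correspond to positions in $v$ shifted by $2$, so $j \in \Ret_1(u) \iff j \in \Ret_1(v)$. Thus $\ret_1^-(u) = 1 + \ret_1(v)$, and summing gives $U_{n+1,n}(s,t) = s\bar U_{n-1}(s) = sU_{n-1}(s)$. The only nontrivial ingredient is the preliminary equidistribution $\bar U_n(t) = U_n(t)$, but since this is built into the existing $P$-partition framework, the remaining work is just bookkeeping on the constrained positions of $1$, $n$, and $n+1$.
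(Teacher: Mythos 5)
Your proof follows essentially the same route as the paper's: strip off the forced initial entries, track how big returns transfer to the residual alternating permutation, and invoke the equidistribution of $\ret_1$ over $U_n$ and $\bar U_n$ (which the paper also relies on, via Theorems \ref{thm:ZU} and \ref{thm:ZbarZ}); your bookkeeping of $\Ret_1$ under the index shifts is correct and somewhat more explicit than the paper's. One slip in the second identity: since $u(2)>u(3)<u(4)>\cdots$, the word $u(3),\ldots,u(n+1)$ is \emph{up-down}, so your map lands in $U_{n-1}$, not $\bar U_{n-1}$ (e.g.\ $45132\mapsto 132\in U_3$); your return computations are unaffected, and the conclusion $U_{n+1,n}(s,t)=sU_{n-1}(s)$ then follows directly, with no need for the equidistribution in that case.
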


\begin{proof}
Suppose $u \in U_{n+1,1}$. Then $u = 1v$ where $v$ is a down-up permutation of  $\{2,\ldots,n+1\}$. Since $u(1) = 1$, all the big returns of $u$ are greater than $u(1)$, and thus $\ret_1(u) = \ret_1^+(u) = \ret_1(v)$. By Theorem \ref{thm:ZbarZ}, we know that the distribution of big returns for up-down alternating permutations is the same as for down-up permutations, and the first equation follows.

The reasoning for the second identity is similar. Now, we notice that if $u \in U_{n+1,n}$, then $u = n(n+1)v$ where $v \in U_{n-1}$. This time, every big return of $u$ is below $u(1) = n$, and there has to be a big return from $n-1$ to $n$. Therefore, $\ret_1(u) = \ret_1^-(u) = 1+\ret_1(v)$. Summing over all $v \in U_{n-1}$ gives the result.
\end{proof}

As a companion to Table \ref{tab:Enr}, our Table \ref{tab:Unr} lists the polynomials $U_{n,r}(s,t)$. Given the key role of $U_n(s,t)$ in this project, we believe the polynomials $U_{n,r}(s,t)$ deserve further study.

\begin{table}
\[
  \begin{array}{r|cccccc}
  n\backslash r & 1 & 2 & 3 
 \\
 \hline
  2& {\scriptstyle 1} \\
  3& {\scriptstyle 1} & {\scriptstyle s} &  \\
  4& {\scriptstyle 1+t} & {\scriptstyle s(1+t)} & {\scriptstyle s} \\
  5& {\scriptstyle 1+3t+t^2} & {\scriptstyle s(2+3t)} & {\scriptstyle s(1+s)(1+t)} \\
  6& {\scriptstyle 1+7t+7t^2+t^3 } & {\scriptstyle 2s(1+5t+2t^2)} & {\scriptstyle s(2+s+5t+4st+t^2+st^2)}  \\
  7& {\scriptstyle 1+14t+31t^2+14t^3+t^4} & {\scriptstyle s(3+26t+28t^2+4t^3)} & {\scriptstyle s(1+s)(3+17t+8t^2)} \\
  \hline 
  \\
  n\backslash r &  4 & 5 & 6
 \\
 \hline
  5&  {\scriptstyle s(1+s)}\\
  6&  {\scriptstyle s(1+t)(2+3s)} &  {\scriptstyle s(1+3s+s^2)}\\
  7&  {\scriptstyle s(3+6s+7t+s^2+19st+t^2+4st^2+4s^2t+s^2t^2)} 
 &  {\scriptstyle 2s(1+t)(1+5s+2s^2)} & {\scriptstyle s(1+7s+7s^2+s^3)}
  \end{array}
\]
\caption{The polynomials $U_{n,r}(s,t)$ for $2\leq n \leq 7$ and $1\leq r \leq n-1$.}\label{tab:Unr}
\end{table}

\subsection{Other combinatorial interpretations}

One of the main contributions of the present work is to give an avenue for combinatorial understanding of the zig-zag Eulerian polynomials in terms of permutation statistics, with the hope that it will ultimately lead to a better understanding of these polynomials and their calculation, recursive or otherwise. This endeavor was partly successful.

There are a variety of other combinatorial interpretations for the Euler numbers \cite{Stanley2010}, so it might be the case that another interpretation reveals more of the structure. Among the possibilities, we found two other sets of combinatorial objects along with statistics whose distributions appear to be captured by the zig-zag Eulerian polynomials:
\vspace{0.5em}
\begin{itemize}
\setlength\itemsep{1em}

\item \textbf{Jacobi permutations}. Jacobi permutations, introduced by Viennot \cite{Viennot1980} in the context of Jacobi elliptic functions, are defined recursively. The empty word is a Jacobi permutation, as is any singleton permutation. For a set of integers $S$ with $m < \min S$, the Jacobi permutations of $S\cup \{m\}$, denoted $J_{S \cup \{m\}}$, are defined by $w = u(m)v$, where $u \in J_I$ for some even-sized subset $I$ and $v \in J_{S-I}$.\footnote{Our definition here actually gives a slight variant on Viennot's Jacobi permutations, as the permutations in Viennot's $J_n$ have the recursive form $u1v$ where $v$ is of even length.} See Table \ref{t-jacobi} for the Jacobi permutations in $J_n = J_{[n]}$ for $n\leq 5$, grouped according to the number of returns. Observe that all of the returns in Table \ref{t-jacobi} are big returns; it is straightforward to show that all returns among Jacobi permutations in $J_n$ are big.
\begin{table}
\[
 \begin{array}{c | ccccc}
 n & \ret(w)=0 & \ret(w)=1 & \ret(w)=2 & \ret(w) = 3 \\
 \hline
 \hline
 1 & 1 \\
 \hline
 2 & 12 \\
 \hline
 3 & 123 & 23\mathbf{1} \\
 \hline
 4 & 1234 & \begin{array}{c} 134\mathbf{2} \\ 23\mathbf{1}4 \\ 341\mathbf{2} \end{array} & 24\mathbf{1}\mathbf{3} \\
 \hline
 5 & 12345 & \begin{array}{c} 1245\mathbf{3} \\ 134\mathbf{2}5 \\ 1452\mathbf{3} \\ 23\mathbf{1}45 \\ 341\mathbf{2}5 \\ 4512\mathbf{3} \\ 2345\mathbf{1} \end{array} & \begin{array}{c} 135\mathbf{2}\mathbf{4} \\ 24\mathbf{1}\mathbf{3}5 \\ 25\mathbf{1}3\mathbf{4} \\ 351\mathbf{2}\mathbf{4} \\ 245\mathbf{3}\mathbf{1} \\ 34\mathbf{2}5\mathbf{1} \\ 452\mathbf{3}\mathbf{1} \end{array} & 35\mathbf{2}\mathbf{4}\mathbf{1}
 \end{array}
\]
\caption{All Jacobi permutations in $J_n$ up to $n=5$, grouped by the number of returns, which are highlighted in bold.} \label{t-jacobi}
\end{table}

\item \textbf{Decreasing binary trees}. A planar binary tree on $[n]$ is \emph{decreasing} if every child has a smaller label than its parent. When $n$ is odd, a \emph{complete} binary tree is one for which every non-leaf vertex has two children, and when $n$ is even, a binary tree is \emph{left complete} if every non-leaf vertex has two children except for the rightmost branch, which has a left leaf but no right leaf.

There is a simple bijection from alternating permutations $u \in U_n$ to such trees, given as follows. Observe that if $w \in U_n$, then $w = u(n)v$ where $u$ is an up-down permutation of odd size and $v$ an up-down permutation. We can therefore define the bijection recursively with $n$ at the root of the tree $T_w$, and $T_u$ as the left branch and $T_v$ the right branch. See Figure \ref{f-trees} for an example. Note that $i$ is a big return of $w$ precisely when, in $T_w$, $i$ is to the right of $i+1$ and is not a leaf adjacent to $i+1$.

\begin{figure}
\[
w=79451\mathbf{3}2\mathbf{8}\mathbf{6} \quad \longleftrightarrow \quad T_w=
  \begin{tikzpicture}[scale=.75,baseline=-1cm]
   \draw (0,0) node[fill=white,inner sep=2] (9) {$9$};
   \draw (6,-1) node[fill=white,inner sep=2,circle,draw=black] (8) {$8$};
   \draw (-1,-1) node[fill=white,inner sep=2] (7) {$7$}; 
   \draw (7,-2) node[fill=white,inner sep=2,circle,draw=black] (6)  {$6$};
   \draw (2,-2) node[fill=white,inner sep=2] (5)  {$5$};
   \draw (1,-3) node[fill=white,inner sep=2] (4)  {$4$};
   \draw (4,-3) node[fill=white,inner sep=2,circle,draw=black] (3)  {$3$};
   \draw (5,-4) node[fill=white,inner sep=2] (2)  {$2$};
   \draw (3,-4) node[fill=white,inner sep=2] (1)  {$1$};
   \draw (7)--(9)--(8)--(5)--(3)--(2);
   \draw (4)--(5);
   \draw (1)--(3);
   \draw (6)--(8);
  \end{tikzpicture}
\]
\caption{Example illustrating the bijection between alternating permutations and (left) complete binary trees. The big returns of $w$ are highlighted in bold; these correspond to the circled vertices in $T_w$.
} \label{f-trees}
\end{figure}
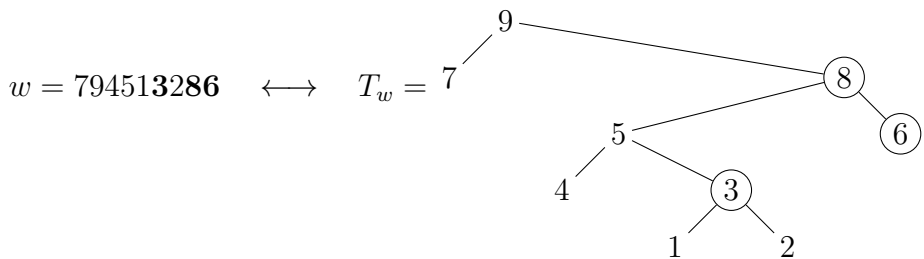

\end{itemize}

\vspace{0.5em}
It is possible that these interpretations or others might yield more insight into the polynomials $Z_n(t)$.

\subsection{More on relaxed $P$-partitions}\label{sec:morerelaxed}

As discussed in Section \ref{s-relaxed}, it is natural to define---for each $r\geq 0$---the \emph{$r$-relaxed $P$-partitions} as follows.

\begin{defn}[$r$-relaxed $P$-partition]
An \emph{$r$-relaxed $P$-partition} is an order-preserving map $g: P \to \NN$ such that for $i <_P j$:
\begin{itemize}
\item $g(i) \leq g(j)$ if $i \leq j+r$,
\item $g(i) < g(j)$ if $i > j+r$.
\end{itemize}
\end{defn}

We let $\A^r(P)$ denote the set of all $r$-relaxed $P$-partitions. When $r=0$, these are simply $P$-partitions. The $r=1$ case corresponds to the relaxed $P$-partitions used throughout this paper. Notice that we have
\[
 \A(P)=\A^0(P) \subseteq \A^1(P)\subseteq \A^2(P) \subseteq \A^3(P) \subseteq \cdots \subseteq \A^{\infty}(P),
\]
where $\A^{\infty}(P)$ denotes the set of all order-preserving maps from $P$ to $\NN$. In fact, $\A^r(P) = \A^{\infty}(P)$ for any $r \geq n-1$.

Much of the theory developed in Section \ref{s-relaxed} carries over readily for general $r\geq 0$. For example, the notion of a successive poset becomes that of an \emph{$r$-successive poset}, meaning that each element $i$ is comparable to each of $i+1, i+2, \ldots, i+r$. We have the following analogue of the fundamental lemma of $P$-partitions, generalizing Lemma~\ref{thm:ftpp} and Lemma~\ref{thm:ftpp2}.

\begin{lemma}[Fundamental lemma of $r$-relaxed $P$-partitions]\label{thm:rftpp2}
Suppose $P$ is $r$-successive. Then the set of $r$-relaxed $P$-partitions is the disjoint union of the $\pi$-partitions of its linear extensions $\pi$:
\[
 \A^r(P) = \bigcup_{\pi \in \LL(P)} \A^r(\pi).
\]
\end{lemma}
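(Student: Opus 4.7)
The plan is to mirror the proofs of Lemma \ref{thm:ftpp} and Lemma \ref{thm:ftpp2}, using induction on the number of incomparable pairs in $P$. The engine is an $r$-relaxed analogue of Observation \ref{obs:refine}: given an incomparable pair $i<j$ in an $r$-successive poset $P$, set $Q = P \cup \{i <_Q j\}$ and $R = P \cup \{j <_R i\}$; the goal is to show $\A^r(P) = \A^r(Q) \sqcup \A^r(R)$, with both refinements themselves $r$-successive (which is automatic, since adding comparable pairs never destroys comparabilities already present).

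First I would observe that the relation added in $Q$ is a naturally labeled pair (since $i<j$ as integers), so $\A^r(Q)$ consists of those $g \in \A^r(P)$ with $g(i) \leq g(j)$. The relation added in $R$ is unnaturally labeled, with ``large end'' $i$ and ``small end'' $j$. Applied to this pair, the $r$-relaxed definition demands $g(j) \leq g(i)$ if $j \leq i + r$ and $g(j) < g(i)$ if $j > i + r$. Because $P$ is $r$-successive, $i$ is comparable to each of $i+1,\ldots,i+r$, so any incomparable pair $i<j$ automatically satisfies $j - i > r$. Therefore $\A^r(R)$ consists exactly of those $g \in \A^r(P)$ with $g(j) < g(i)$, i.e., $g(i) > g(j)$.

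The two sets $\A^r(Q)$ and $\A^r(R)$ cover $\A^r(P)$ because any $g$ satisfies either $g(i) \leq g(j)$ or $g(i) > g(j)$; they are disjoint precisely because $r$-successiveness forced the \emph{strict} inequality in $\A^r(R)$, incompatible with $g(i) \leq g(j)$. Now I would induct on the number of incomparable pairs. In the base case there are none and $P$ is itself a chain $\pi$, so $\LL(P) = \{\pi\}$ and the identity is trivial. For the inductive step, pick any incomparable pair $i<j$ in $P$, apply the analysis above, and note that $\LL(P) = \LL(Q) \sqcup \LL(R)$ (each linear extension of $P$ orders $i$ and $j$ in exactly one way). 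The inductive hypothesis applied to both $Q$ and $R$ yields
\[
 \A^r(P) = \A^r(Q) \sqcup \A^r(R) = \Bigl(\bigsqcup_{\pi \in \LL(Q)} \A^r(\pi)\Bigr) \sqcup \Bigl(\bigsqcup_{\pi \in \LL(R)} \A^r(\pi)\Bigr) = \bigsqcup_{\pi \in \LL(P)} \A^r(\pi),
\]
as desired.

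The only delicate point, and hence the ``main obstacle,'' is lining up the $r$-relaxed inequalities so that disjointness of $\A^r(Q)$ and $\A^r(R)$ really does hold. The hypothesis of $r$-successiveness is exactly what is needed: without it, one could have an incomparable pair with $j - i \leq r$, in which case the added relation in $R$ would only require $g(j) \leq g(i)$, so that configurations with $g(i) = g(j)$ would lie in both $\A^r(Q)$ and $\A^r(R)$ and the decomposition would fail to be disjoint. Everything else is bookkeeping inherited from the $r=1$ case.
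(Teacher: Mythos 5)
Your proof is correct and follows the paper's own route: the paper establishes the $r=1$ case via Observation \ref{obs:refine} plus induction on the number of incomparable pairs, and then states that the argument "carries over readily" for general $r$; your write-up simply spells out that carrying over, correctly pinpointing that $r$-successiveness forces $j - i > r$ for every incomparable pair, which is exactly what makes the strict inequality in $\A^r(R)$ kick in and gives disjointness.
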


This statement implies the corresponding order polynomials $\op^r_P(m)$ can be expressed as a sum over linear extensions and we get the following generalization of Proposition \ref{prp:decomp}. 

\begin{prop}\label{prp:rdecomp}
For any $r$-successive poset $P$ of $[n]$, we have
\[
 \sum_{m\geq 1} \op^r_P(m) t^m = \frac{\sum_{\pi \in \LL(P)} t^{1+\des_r(\pi)}}{(1-t)^{n+1}},
\]
where we recall from Section \ref{sec:alt} that $\des_r(w)=|\{\, i : w(i)>w(i+1)+r \,\}|$.
\end{prop}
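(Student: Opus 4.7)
The plan is to mirror the proof structure of Proposition \ref{prp:decomp}, which is the $r=1$ special case, using the fundamental lemma for $r$-relaxed $P$-partitions (Lemma \ref{thm:rftpp2}) together with a direct computation of $\op^r_\pi(m)$ for a chain $\pi$.

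First I would invoke Lemma \ref{thm:rftpp2} to write, for an $r$-successive poset $P$,
\[
 \op^r_P(m) = \sum_{\pi \in \LL(P)} \op^r_\pi(m),
\]
reducing the problem to computing the generating function of $\op^r_\pi(m)$ for a single chain $\pi$.

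Next I would unwind the definition of an $r$-relaxed $\pi$-partition for $\pi \in S_n$. Since $\pi(1) <_\pi \pi(2) <_\pi \cdots <_\pi \pi(n)$, a map $g \in \A^r(\pi;m)$ is exactly a sequence $1 \leq g(\pi(1)) \leq g(\pi(2)) \leq \cdots \leq g(\pi(n)) \leq m$ with strict inequality $g(\pi(i)) < g(\pi(i+1))$ precisely when $\pi(i) > \pi(i+1)+r$, i.e., when $i \in \Des_r(\pi)$. A standard ``shift by the number of forced gaps'' argument (identical to the derivation of \eqref{eq:piop}) then yields
\[
 \op^r_\pi(m) = \binom{m+n-1-\des_r(\pi)}{n},
\]
and hence
\[
 \sum_{m\geq 0} \op^r_\pi(m)t^m = \frac{t^{1+\des_r(\pi)}}{(1-t)^{n+1}}
\]
via the well-known identity $\sum_{m\geq 0}\binom{m+n-1-k}{n}t^m = t^{1+k}/(1-t)^{n+1}$.

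Finally I would sum the last display over all $\pi \in \LL(P)$ and apply the identity $\op^r_P(m) = \sum_{\pi} \op^r_\pi(m)$ to obtain the claimed formula. There is no real obstacle here beyond the bookkeeping: the only point that needs to be checked with a bit of care is that Lemma \ref{thm:rftpp2}, as stated in the excerpt, applies to $r$-successive posets in exactly the form needed, and that its hypothesis (comparability of $i$ with $i+1, \ldots, i+r$) is precisely what guarantees the union over linear extensions is disjoint --- the same inductive refinement argument behind Observation \ref{obs:refine} generalizes cleanly to the $r$-relaxed setting. Once that is in hand, the computation of $\op^r_\pi(m)$ is a routine counting of weakly increasing sequences with forced strict ascents, and the proposition follows immediately.
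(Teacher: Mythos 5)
Your proposal is correct and follows exactly the same route the paper intends: invoke the fundamental lemma of $r$-relaxed $P$-partitions (Lemma~\ref{thm:rftpp2}) to reduce to chains, compute $\op^r_\pi(m) = \binom{m+n-1-\des_r(\pi)}{n}$ just as in Equation~\eqref{eq:piop}, and sum the resulting rational generating functions over $\LL(P)$. The paper presents this proposition without a written-out proof, noting only that the $r=1$ arguments of Section~\ref{s-relaxed} carry over verbatim, which is precisely what you have done.
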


The technology of $r$-relaxed $P$-partitions can be used to deduce many known results of Foata and Sch\"utzenberger \cite{FS} for the distribution of $\des_r$ over the full symmetric group.

The rank $r$ generalization of the zig-zag poset is what we call the \emph{$r$-chainlink poset}, denoted $Z_n^r$. This is the poset on $[n]$ defined in terms of division by $r+1$ as follows. Let $i = a(r+1)+ b$ and $j = c(r+1)+d$, where $a,c\geq 0$ and $0\leq b,d \leq r$. Then $i <_{Z^r_n} j$ if and only if $a \geq c$ and $b \leq d$. This is illustrated in Figure \ref{f-chainlink} for $r=3$ and $n=22$. Note that $Z^r_n$ is a rank $r$ lattice with ranks corresponding to congruence classes modulo $r+1$.

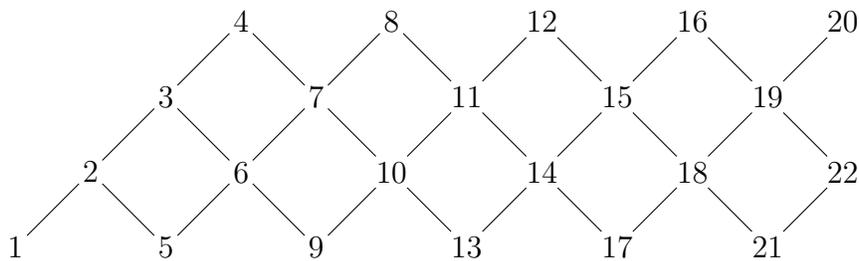
\begin{figure}
\[
\begin{tikzpicture}
\draw (0,0)--(3,3);
\draw (2,0)--(5,3);
\draw (4,0)--(7,3);
\draw (6,0)--(9,3);
\draw (8,0)--(11,3);
\draw (10,0)--(11,1);
\draw (1,1)--(2,0);
\draw (2,2)--(4,0);
\draw (3,3)--(6,0);
\draw (5,3)--(8,0);
\draw (7,3)--(10,0);
\draw (9,3)--(11,1);
\draw (0,0) node[fill=white, circle, inner sep=1] {$1$};
\draw (1,1) node[fill=white, circle, inner sep=1] {$2$};
\draw (2,2) node[fill=white, circle, inner sep=1] {$3$};
\draw (3,3) node[fill=white, circle, inner sep=1] {$4$};
\draw (2,0) node[fill=white, circle, inner sep=1] {$5$};
\draw (3,1) node[fill=white, circle, inner sep=1] {$6$};
\draw (4,2) node[fill=white, circle, inner sep=1] {$7$};
\draw (5,3) node[fill=white, circle, inner sep=1] {$8$};
\draw (4,0) node[fill=white, circle, inner sep=1] {$9$};
\draw (5,1) node[fill=white, circle, inner sep=1] {$10$};
\draw (6,2) node[fill=white, circle, inner sep=1] {$11$};
\draw (7,3) node[fill=white, circle, inner sep=1] {$12$};
\draw (6,0) node[fill=white, circle, inner sep=1] {$13$};
\draw (7,1) node[fill=white, circle, inner sep=1] {$14$};
\draw (8,2) node[fill=white, circle, inner sep=1] {$15$};
\draw (9,3) node[fill=white, circle, inner sep=1] {$16$};
\draw (8,0) node[fill=white, circle, inner sep=1] {$17$};
\draw (9,1) node[fill=white, circle, inner sep=1] {$18$};
\draw (10,2) node[fill=white, circle, inner sep=1] {$19$};
\draw (11,3) node[fill=white, circle, inner sep=1] {$20$};
\draw (10,0) node[fill=white, circle, inner sep=1] {$21$};
\draw (11,1) node[fill=white, circle, inner sep=1] {$22$};
\end{tikzpicture}
\]
\caption{The $r$-chainlink poset $Z^r_n$ for $r=3$ and $n=22$.} \label{f-chainlink}
\end{figure}

We define 
\[
Z^r_n(t) = \sum_{\pi \in \LL(Z_n^r)} t^{1+\des_r(\pi)} = \sum_{k=0}^{n-r-1}z^r(n,k)t^{k+1}.
\]
Notice that $Z^0_n$ is an antichain and $Z^1_n$ is the zig-zag poset $Z_n$, and so $Z^0_n(t) = A_n(t)$ is the classical Eulerian polynomial while $Z^1_n(t) = Z_n(t)$ is the zig-zag Eulerian polynomial studied in this paper. See Table \ref{tab:des2} for the coefficients of $Z_n^2(t)$ and $Z_n^3(t)$.

Using techniques analogous to those in Section \ref{sec:proof1}, we can show $Z^r_n(t)$ equals the $P$-Eulerian polynomial for a natural labeling of $Z_n^r$, and hence we can apply Br\"and\'en's result again to achieve the following.

\begin{thm} \label{t-gamma2}
Fix $r\geq 0$. For each $n \geq 1$, there exist nonnegative integers $\gamma_{n,j}$ such that
\[
 Z_n^r(t) = t\cdot\sum_{0\leq 2j\leq n-r-1} \gamma_{n,j} t^j(1+t)^{n-r-1-2j}.
\]
\end{thm}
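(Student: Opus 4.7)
The plan is to imitate the proof of Theorem \ref{t-gamma} in the $r=1$ case, reducing the claim to Br\"and\'en's theorem on sign-graded posets. The key input is that $Z_n^r$ is a graded poset of rank $r$ on $n$ elements; once we produce a natural relabeling that preserves the right order-polynomial data, the result will fall out of \cite[Theorem 4.2]{Br} applied to the sign-graded poset obtained.

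First, I would identify a permutation $\sigma = \sigma_{r,n}$ such that $\sigma Z_n^r$ is naturally labeled. Because the Hasse diagram of $Z_n^r$ decomposes into $\lceil n/(r+1)\rceil$ chains (the ``columns'' indexed by the quotient $a$ under division by $r+1$), and the comparability rule $a \geq c$ and $b \leq d$ is controlled entirely by rank, such a $\sigma$ can be built by permuting the labels in each rank level so that every cover relation becomes naturally labeled. For $r=1$ this recovers the involution $\varepsilon$; in general $\sigma_{r,n}$ is the product of cycles that reorder the labels $\{a(r{+}1)+1,\ldots,(a{+}1)(r{+}1)\}$ within each column according to the rank structure.

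Next, following the pattern of Proposition \ref{prp:f'f}, I would prove a bijection
\[
 \A^r(Z_n^r;m) \longleftrightarrow \A(\sigma Z_n^r;m), \qquad f \longmapsto \sigma f,
\]
with $(\sigma f)(i) = f(\sigma(i))$. The point to verify is that every unnaturally labeled cover $i <_{Z_n^r} j$ satisfies $i \leq j+r$ (so that in the $r$-relaxed definition the strict constraint $f(i)<f(j)$ is replaced by $f(i)\leq f(j)$), and that under $\sigma$ this matches the ordinary $P$-partition inequality $(\sigma f)(\sigma^{-1}(i)) \leq (\sigma f)(\sigma^{-1}(j))$ associated to a naturally labeled cover in $\sigma Z_n^r$. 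Combining this bijection with Proposition \ref{prp:rdecomp} and the classical generating function identity \eqref{e-Eulord} yields
\[
 Z_n^r(t) = A_{\sigma Z_n^r}(t).
\]

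With this identification in hand, the proof concludes by invoking Br\"and\'en's theorem: the poset $\sigma Z_n^r$ is naturally labeled and graded of rank $r$, hence sign-graded, and so $A_{\sigma Z_n^r}(t)/t = Z_n^r(t)/t$ admits a nonnegative expansion in the basis $\Gamma_{n-r-1}$, which is precisely the claim of Theorem \ref{t-gamma2}. The main obstacle is the second step: although the column structure of $Z_n^r$ makes the existence of $\sigma$ morally clear, one must carefully check that the covering pairs controlled by the rule $a\geq c$, $b\leq d$ all satisfy the label bound $|i-j|\leq r$ needed for the relaxation to apply, and that no naturally labeled pair is mistakenly broken by $\sigma$. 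For $r=1$ this reduces to Observation \ref{obs:BDes}; for general $r$ it requires an analogous but more careful case analysis based on the division $i = a(r+1)+b$.
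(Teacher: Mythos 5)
Your proposal follows essentially the same route as the paper's own (one-sentence) argument: establish that $Z_n^r(t)$ equals the $P$-Eulerian polynomial of a natural labeling $\sigma Z_n^r$ by mimicking Proposition \ref{prp:f'f}, observe that this natural labeling is graded of rank $r$ (hence sign-graded), and invoke Br\"and\'en's theorem to obtain gamma-nonnegativity in the basis $\Gamma_{n-r-1}$. The paper only gestures at this (``using techniques analogous to those in Section~\ref{sec:proof1}\ldots''), so your proposal is actually more explicit than what appears in print.

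One point worth sharpening: you phrase the key verification as checking that covers satisfy ``$|i-j|\le r$,'' but the actual condition needed for the relaxed constraint to become weak is just $i\le j+r$ for the unnatural covers $i <_{Z_n^r} j$ with $i>j$; the other direction is automatic for natural covers. More substantively, because the $r$-relaxed constraint is stated over all relations $i<_P j$ and not just covers, you should make explicit that checking the \emph{covers} suffices, i.e., that no longer relation in $Z_n^r$ forces a strict inequality that the covers alone do not already impose. This is where the ``chainlink'' structure of $Z_n^r$ actually matters (the unnatural covers $i\lessdot j$ have $i-j=r$ exactly), and it is the real content behind the paper's phrase ``techniques analogous to Section~\ref{sec:proof1}.'' With that caveat, your argument is correct and matches the intended proof.
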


\begin{table}
\[
\begin{array}{cc}
\begin{array}{|c|cccccccc|}
\hline
n\backslash k & 0 & 1 & 2 & 3 & 4 & 5 & 6 & 7\\
\hline
1 & 1 &&&&&&&\\
2 & 1 &&&&&&&\\
3 & 1 &&&&&&&\\
4 & 1 & 1 &&&&&&\\
5 & 1 & 3 & 1 &&&&&\\
6 & 1 & 6 & 6 & 1 &&&&\\
7 & 1 & 10 & 20 & 10 & 1 &&& \\
8 & 1 & 18 & 65 & 65 & 18 & 1 && \\
9 & 1 & 30 & 181 & 320 & 181 & 30 & 1 & \\
10 & 1 & 43 & 389 & 1097 & 1097 & 389 & 43 & 1 \\
\hline
 \end{array}
 &
 \begin{array}{|c|ccccccc|}
\hline
n\backslash k & 0 & 1 & 2 & 3 & 4 & 5 & 6 \\
\hline
1 & 1 &&&&&&\\
2 & 1 &&&&&&\\
3 & 1 &&&&&&\\
4 & 1 &  &&&&&\\
5 & 1 & 1 & &&&& \\
6 & 1 & 3 & 1 &  &&& \\
7 & 1 & 6 & 6 & 1 & &&\\
8 & 1 & 10 & 20 & 10 & 1 & & \\
9 & 1 & 14 & 46 & 46 & 14 & 1 &  \\
10 & 1 & 22 & 113 & 190 & 113 & 22 & 1  \\
\hline
 \end{array}
\\
\rule{0pt}{3ex}
r=2 & r=3
\end{array}
\]
\caption{Table of values of $z^r(n,k)$ for $r\in \{2,3\}$, $1\leq n \leq 10$, and \\ $0\leq k \leq n-r-1$.}\label{tab:des2}
\end{table}

Recall that the \emph{Narayana numbers} count $231$-avoiding permutations according to descent number. See, e.g., Chapter 2 of \cite{Petersen2015}. Let
\[
 C_n(t) = \sum_{w\in S_n(231)} t^{\des(w)},
\]
where $S_n(231)$ is the subset of $231$-avoiding permutations in $S_n$, be the $n$th \emph{Narayana polynomial}. A quick calculation shows $C_3(t)=1+3t+t^2$, and we can see above that $tC_3(t)= Z_4^1(t) = Z_5^2(t) = Z_6^3(t)$. Through a bijection between $231$-avoiding permutations and linear extensions of the poset $[2]\times [n]$ that preserves the descent number statistic, we can make the following observation.

\begin{obs}
 For all $1\leq j \leq r+1$, we have $Z_j^r(t) = 1$, and for all $1\leq j \leq r+2$ we have
 \[
  Z_{r+j}^r(t) = tC_j(t).
 \]
\end{obs}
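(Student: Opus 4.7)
The observation contains two claims, which I will treat separately.

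First, consider $j \leq r+1$. I will show that the poset $Z_j^r$ is simply the chain $1 <_P 2 <_P \cdots <_P j$. Indeed, every element $i \in \{1, \ldots, j\}$ satisfies $i \leq r+1$, so the decomposition $i = a(r+1)+b$ has $a = 0$, placing all elements in the same ``column zero'' of the $r$-chainlink structure. Within column zero the induced order agrees with the integer order, so $Z_j^r$ is the indicated chain. Its unique linear extension is the identity permutation, which has no $r$-descents, hence $Z_j^r(t) = t^{1+0} = t$. (I read the observation's ``$Z_j^r(t) = 1$'' as a slight abuse of notation that omits the leading factor of $t$, consistent with other ``$Z_n^r(t)/t$'' normalizations used in the paper.)

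Second, I tackle the formula $Z_{r+j}^r(t) = tC_j(t)$ for $j \leq r+2$. I begin by describing $Z_{r+j}^r$ in this range explicitly: it consists of a column-zero chain $u_0 <_P u_1 <_P \cdots <_P u_r$ with labels $1, 2, \ldots, r+1$; a column-one chain $v_0 <_P v_1 <_P \cdots <_P v_{j-2}$ with labels $r+2, r+3, \ldots, r+j$; and cross relations $v_k <_P u_{k+1}$ for $0 \leq k \leq j-2$. Each linear extension $\pi \in \LL(Z_{r+j}^r)$ is uniquely encoded by the weakly increasing \emph{slot sequence} $(s_0, s_1, \ldots, s_{j-2})$, where $s_k$ counts the column-zero elements appearing before $v_k$ in $\pi$; the cross relations translate into $s_k \leq k+1$. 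A standard ballot-style argument shows that the number of such sequences is exactly $C_j$, matching the total count of linear extensions of $[2]\times[j]$.

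Next I would analyze $\des_r(\pi)$ in this parameterization. The only possible $r$-descents arise at transitions $v_k \to u_{s_k}$, which occur precisely when $k = j-2$ or $s_{k+1} > s_k$, and such a transition constitutes an $r$-descent exactly when $s_k \leq k$ (equivalently $v_k - u_{s_k} = r+1+k-s_k > r$). The resulting statistic on slot sequences must then be shown to be equidistributed with the ordinary descent statistic on linear extensions of $[2]\times[j]$. Combined with the classical descent-preserving bijection between $\LL([2]\times[j])$ and $231$-avoiding permutations of $[j]$, this yields $Z_{r+j}^r(t) = tC_j(t)$.

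The main obstacle is the equidistribution in the preceding paragraph. The same Catalan-sized family of slot sequences carries two different descent-like statistics: the $[2]\times[j]$ statistic counts strict ascents in the sequence (with $s_{-1}=0$), whereas $\des_r$ combines a strict-ascent (or end-of-sequence) condition with the position-dependent inequality $s_k \leq k$. Constructing an explicit bijection that intertwines these statistics---for example, by transporting both to a common Dyck-path encoding in which each becomes the Narayana peak statistic---is the technical heart of the proof. As an alternative route, one could bypass the bijection and verify the equidistribution by computing the refined generating function and invoking the Narayana identity $N(j, k) = \frac{1}{j}\binom{j}{k}\binom{j}{k-1}$.
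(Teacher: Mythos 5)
Your setup is sound: the poset $Z_{r+j}^r$ for $j\le r+2$ is correctly described as two chains $u_0<\cdots<u_r$ (labels $1,\dots,r+1$) and $v_0<\cdots<v_{j-2}$ (labels $r+2,\dots,r+j$) with $v_k<_P u_{k+1}$, the slot-sequence encoding of linear extensions is a genuine bijection onto weakly increasing sequences with $s_k\le k+1$ (a Catalan-sized set), and your formula for $\des_r$ in terms of $(s_0,\dots,s_{j-2})$ checks out (e.g.\ for $j=3$ it yields $1+3t+t^2=C_3(t)$). Your reading of ``$Z_j^r(t)=1$'' as a normalization slip for $Z_j^r(t)=t$ is also the right call: the definition $Z_n^r(t)=\sum_\pi t^{1+\des_r(\pi)}$ forces the value $t$ for a chain, and only that reading makes the first claim consistent with the second at $j=1$, where $tC_1(t)=t$.

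The genuine gap is that you stop exactly where the content of the second claim lies. Everything up to and including the identity $|\LL(Z_{r+j}^r)|=C_j$ only matches cardinalities; the observation asserts that the statistic $d(s)=\#\{k: s_k\le k \text{ and } (k=j-2 \text{ or } s_{k+1}>s_k)\}$ on slot sequences has the Narayana distribution $C_j(t)$, and you explicitly defer this (``must then be shown,'' ``is the technical heart''), sketching two possible routes without executing either. Without that step nothing is proved beyond $Z_{r+j}^r(1)=C_j$. To be fair, the paper itself offers no more than a one-sentence pointer---it invokes the classical descent-preserving bijection between linear extensions of $[2]\times[j]$ and $231$-avoiding permutations and leaves the identification of $(\LL(Z_{r+j}^r),\des_r)$ with $(\LL([2]\times[j]),\des)$ implicit---so your reduction to a concrete statistic on ballot sequences is compatible with, and more explicit than, the paper's intended route. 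But as a standalone proof your proposal is incomplete: you need to either exhibit the bijection intertwining $d$ with the descent statistic on $\LL([2]\times[j])$ (for instance by observing that $u_{j-1}<\cdots<u_r$ sits above the whole poset and can be stripped off without changing the descent distribution, reducing to a ladder-shaped poset), or carry out the refined count and match it against $N(j,k)=\frac{1}{j}\binom{j}{k}\binom{j}{k-1}$.
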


This is an interesting sort of stability, and we can see that while the classical Eulerian polynomials (arising from the study of the permutahedron) come from $Z_n^0(t)$, we also have the Narayana polynomials (arising from the study of the associahedron) coming from $Z_{r+j}^r(t)$. That both families occur within the larger family $Z_n^r(t)$ is intriguing.

Finally, given the real-rootedness of the classical Eulerian polynomials and our conjectured real-rootedness of the zig-zag Eulerian polynomials $Z_n(t)$, we pose the following question.

\begin{question}
Are the polynomials $Z_n^r(t)$ real-rooted for all $r\geq 0$ and $n\geq 1$?
\end{question} 

\section*{Acknowledgements}
We give thanks to the On-Line Encyclopedia of Integer Sequences (OEIS) for giving us the breadcrumbs to follow when researching this problem. Thanks also to Rafael Gonz\'alez D'Le\'on and Persi Diaconis for helpful conversation and references. Finally, we thank two anonymous referees for corrections and suggestions which improved the presentation of this paper. YZ was partially supported by NSF grant DMS-2316181.

\bibliographystyle{plain}
\bibliography{bibliography}

\end{document}